\let\ORIlabel\label
\let\ORIrefstepcounter\refstepcounter
   \let\label\ORIlabel 
   \let\refstepcounter\ORIrefstepcounter}
\newcommand{\bxi}{\boldsymbol{\xi}}
\newcommand{\tabincell}[2]{\begin{tabular}[t]{#1}#2\end{tabular}}
 \newcommand{\cmark}{\ding{51}}%
\newcommand{\xmark}{\ding{55}}%
\newcommand{\bv}{\boldsymbol v}
\newcommand{\bz}{\boldsymbol z}
\newcommand{\Xscr}{\mathcal{X}}
\newcommand{\Cscr}{\mathcal{C}}
\newcommand{\lm}[1]{\begin{color}{blue}#1\end{color}}
\crefname{hypothesis}{Hypothesis}{Hypotheses}
\title{Zeroth-order Gradient and Quasi-Newton Methods for Nonsmooth Nonconvex Stochastic Optimization\thanks{Submitted \today. A very preliminary version appeared
in~\cite{shanbhag2021zeroth}{.} 
\funding{This work was funded in part by the following grants:  ONR grants N$00014$-$22$-$1$-$2589$ and N$00014$-$22$-$1$-$2757$, AFOSR grant FA9550-24-1-0259, and  DOE grant DE-SC$0023303$.}}
}
\author{LUKE MARRINAN\and UDAY V. SHANBHAG\thanks{LM is with Ind. \& Manuf. Engg., Pennsylvania State University  ({\email{lwm5431@psu.edu}}), UVS is with Ind. \& Oper. Engg., University of Michigan (\email{udaybag@umich.edu}), while FY is with Ind. \& Sys. Engg., Rutgers University (\email{farzad.yousefian@rutgers.edu}). }
\and FARZAD YOUSEFIAN
}
\newcommand*{\addFileDependency}[1]{
  \typeout{(#1)}
  \@addtofilelist{#1}
  \IfFileExists{#1}{}{\typeout{No file #1.}}
}
\declaretheoremstyle[
  headfont=\normalfont\bfseries,
  notefont=\normalfont\bfseries, 
  headindent = 0pt,
  notebraces={[}{]},
  bodyfont=\normalfont,
numberwithin=section,
spaceabove=8pt,
mdframed={
 linecolor=gray!20,
  skipabove=18pt,
  innerbottommargin=10pt, 
  backgroundcolor=gray!20,
  innerleftmargin=3pt,
  innerrightmargin=8pt}
]{mytheorem}
\def\bko{{\rm 1\kern-.17em l}}
\def\Nscr{{\mathcal N}}
\def\Kscr{{\mathcal K}}
\newcommand{\f}{\tilde{f}}
\def\be{\begin{enumerate}}
\def\ee{\end{enumerate}}
\def\Nscr{{\mathcal N}}
 \newcommand{\remove}[1]{}
\newcommand{\EXP}[1]{\mathbb{E}\left[#1\right] }
\newcommand{\x}{{\mathbf{x}}}
\newcommand{\uu}{{\mathbf{u}}}
\newcommand{\vv}{{\mathbf{v}}}
\newcommand{\y}{{\mathbf{y}}}
\def\sF{\mathcal{F}}
\def\Real{\mathbb{R}}
\def\g{\gamma}
\definecolor{britishracinggreen}{rgb}{0.0, 0.36, 0.15}
\newcommand{\fy}[1]{{\color{black}#1}}
\newcommand{\fyy}[1]{\begin{color}{black}#1\end{color}}
\newcommand{\far}[1]{\begin{color}{black}#1\end{color}}
\newcommand{\us}[1]{\begin{color}{black}#1\end{color}}
\newcommand{\tikzmark}[1]{\tikz[overlay,remember picture] \node (#1) {};}
\newcommand{\vrgzoIterationComplexity}{$\mathcal{O}({n^{1/2}}{(L_0\eta^{-1} +L_0^2)} \epsilon^{-2})$}
\newcommand{\vrgzoSampleComplexity}{$\mathcal{O}({n^{3/2}(L_0^3\eta^{-2}+L_0^5)} \epsilon^{-4})$}
\newcommand{\vrsqnzoIterationComplexity}{$\mathcal{O}({L_0^{4}}{n^{2}}{\eta^{-4}}\epsilon^{-2})$}
\newcommand{\vrsqnzoSampleComplexity}{$\mathcal{O}(L_0^{9} n^{5}\eta^{-5}\epsilon^{-5})$}
\begin{document}

\maketitle
\thispagestyle{empty}

\begin{abstract}  
	We consider the minimization of a Lipschitz continuous and
	expectation-valued function, denoted by $f$ and defined as $f(\x) \,
	\triangleq \, \mathbb{E}[\, \tilde{f}(\x,  \bxi)\, ]$, over a closed and convex
	set {$\Xscr$}. Our focus lies on obtaining both asymptotics as well as rate and
	complexity guarantees for computing an approximate stationary point (in a
	Clarke sense) via zeroth-order schemes. We adopt a smoothing-based approach
	reliant on minimizing $f_{\eta}$ where $f_{\eta}(\x) \triangleq
	\mathbb{E}_{\uu}\left[\, f(\x+\eta \uu)\, \right]$, $\uu$ is a random variable defined on
	a unit sphere, and $\eta > 0$.  In fact, it has been observed that a
	stationary point of the $\eta$-smoothed problem is an $\eta$-stationary
	point for the original problem in the Clarke sense. In such a setting, we
	develop two sets of schemes with promising empirical behavior. (I) {We
		develop a smoothing-enabled {\em variance-reduced zeroth-order gradient}
		framework ({\bf VRG-ZO}) for minimizing $f_{\eta}$ over $\Xscr$}. In this
	setting, we make two sets of contributions for the sequence generated by
	the proposed zeroth-order gradient scheme.  (a) The residual function of
	the smoothed problem tends to zero almost surely along the generated
	sequence, allowing for making guarantees {for $\eta$-Clarke stationary
		solutions of the original problem}; (b) To compute an $\x$ that ensures
	that the expected norm of the residual of the $\eta$-smoothed problem is
	within $\epsilon$ requires no greater than \vrgzoIterationComplexity ~projection steps and \vrgzoSampleComplexity ~function evaluations.  (II) Our second scheme is
		a zeroth-order stochastic quasi-Newton scheme ({\bf VRSQN-ZO}) reliant on a
	combination of randomized and Moreau smoothing; the corresponding iteration and sample complexities for this scheme are \vrsqnzoIterationComplexity ~and  
	\vrsqnzoSampleComplexity, respectively. These
	statements appear to be novel in the case of both constrained problems as well as in SQN settings, as there appear to be few available results that can contend
	with general nonsmooth, nonconvex, and stochastic regimes via zeroth-order
	approaches. 
\end{abstract}
\section{Introduction}
We consider the following stochastic optimization problem
\begin{align}\label{eqn:prob}
	\min_{\x \, \in \, \Xscr} & \quad f(\x) \, \triangleq \, \mathbb{E}\left[\, \f(\x,\bxi)\, \right], 
\end{align}
where $f:\mathbb{R}^n\to \mathbb{R}$ is a real-valued, nonsmooth, and nonconvex
function, $\Xscr \subseteq \mathbb{R}^{n} $ is a closed and convex set, and $\bxi:\Omega \to \mathbb{R}^d$ denotes
a random variable associated with the probability space $(\Omega,
\mathcal{F},\mathbb{P})$. {Our interest lies in the case when $f$ is
	Lipschitz continuous and at any $\x \in \mathbb{R}^n$,  $f(\x)\triangleq
	\mathbb{E}[{\f}(\x,\bxi)]$ and $\bxi: \Omega \mapsto \Xi$ is a $d$-dimensional
	random variable with $\Xi \, \subseteq \, \Real^d$. We denote a realization of
	$\bxi$ and $\f({\x},\bxi)$ by $\xi$ and $\f({\x},\xi)$, respectively}. Recall that
function $f$ is $L_0$-Lipschitz continuous on the set $\Xscr$ if there exists a
scalar $L_0>0$ such that for all $\x,\y \in \Xscr$ we have $|f(\x)-f(\y)| \leq
L_0\|\x-\y\|$. Our goal lies in devising randomized zeroth-order
stochastic gradient and stochastic quasi-Newton (SQN) methods with iteration
and sample complexity guarantees for approximating a stationary point to
\eqref{eqn:prob} in nonsmooth nonconvex regimes. Despite significant
literature on contending with nonsmooth stochastic convex optimization
problems~\cite{shapiro09lectures}, most nonconvex generalizations are
restricted to structured regimes where nonconvexity often emerges as  an
expectation-valued smooth function while nonsmoothness arises in a
deterministic form.  Often, $\tilde{f}(\bullet,\xi)$ may be both nonconvex
and nonsmooth and proximal stochastic gradient
schemes~\cite{ghadimi2016mini,lei2020asynchronous} cannot be adopted.
We now discuss some relevant research in nonsmooth and nonconvex regimes. 

\begin{table}[htp]
	\caption{Previous work on stochastic, nonsmooth, nonconvex settings. 
	} \label{table:nonsmoothnonconvex}
	\begin{center}
		\vspace{-0.1in}
			\tiny
\begin{tabular}{>{\raggedright\arraybackslash}p{0.8in}cccccp{1.6in}}				
				\toprule
				Author  & C & S & D  & U & O  & Comments \\
				\hline
				\tabincell{c}{\\Burke, Lewis, \\
					Overton (2005)}   & \cmark &  \xmark  &  \cmark &   \cmark &  \cmark & Gradients in neighborhood of iterate are assumed to be available. \\ \midrule
					Xu and Yin (2017)  &   \xmark & \tabincell{c}{ \xmark \\nonconvex\\ part is \\ proximable} &  \cmark &  \cmark &  \cmark & Prox-linear algorithm applied to nonconvex problem.   \\ \midrule
				Ghadimi, Lan, Zhang (2016)  & \tabincell{c}{ \xmark \\ nonconvex\\ part is\\ smooth} &  \tabincell{c}{\xmark \\ nonsmooth \\ part is \\ convex} &  \cmark  & \cmark  &  \cmark & A randomized zeroth-order algorithm leveraging Gaussian smoothing is introduced.  \\ \midrule
				Nesterov and Spokoiny (2017)  &  \cmark  &  \xmark  &  \cmark   &   \cmark  &  \xmark & Zeroth-order  Gaussian smoothing \\ \midrule		
				Zhang, Lin, Jegelka, Sra, Jadbabaie (2020)   &  \xmark  &  \xmark &  \cmark &   \cmark  &  \cmark   & Negative result for nonconvex nonsmooth optimization.  \\ \midrule
				\rowcolor{pink}
				\tabincell{c}{\\
					{\texttt{\bf VRG-ZO}}} &  \xmark  &  \xmark &  \xmark &   \xmark  &  \xmark   & Zeroth-order method reliant on randomized smoothing. \\
				\bottomrule
			\end{tabular}
	\end{center}
	{\tiny C:{C}onvex, S:{S}mooth, D:Deterministic, U: Unconstrained, O: Access to Oracle for subdifferential.} 
	\vspace{-0.2in}
\end{table}

\begin{table}[htp]
	\caption{Previous work on SQN in convex, nonconvex, smooth, nonsmooth settings.} \label{table:SQN}
	\begin{center}
		\tiny
		\vspace{-0.1in}
			\begin{tabular}{>{\raggedright\arraybackslash}p{0.8in}cccccp{2.25in}} 
				\toprule
				Author  & C & S & D  & U & O   & Comments \\
				\midrule
				Wang, Ma, Goldfarb, Liu (2017)  & \xmark  & \cmark & \cmark & \cmark &  \cmark & A stochastic, limited memory BFGS scheme for nonconvex optimization is introduced. \\ \midrule				
				Curtis and Que (2019)  & \xmark  & \xmark & \cmark & \cmark & \cmark & Convergence for an SQN schemes based on gradient sampling is delivered.\\ \midrule
				Bollapragada and Wild (2023) &  \cmark & \cmark & \xmark & \cmark  & \xmark & An adaptive sampling zeroth-order quasi-Newton method is proposed.  \\ \midrule \rowcolor{pink}	    \tabincell{c}{\\
					{\texttt{\bf VRSQN-ZO}}}	 & \xmark  & \xmark & \xmark &  \xmark  & \xmark   & Zeroth-order SQN reliant on randomized and Moreau smoothing.\\
				\bottomrule
			\end{tabular}
	\end{center}
	{\tiny C:{C}onvex, S:{S}mooth, D:Deterministic, U: Unconstrained, O: Access to Oracle for subdifferential.} 
	\vspace{-0.2in}
\end{table}
	\begin{table}[htb]
			\caption{Comparison of rate and complexity guarantees for gradient-based schemes.}
		\label{tbl:summary_table}
		\scriptsize
		\centering
		\begin{tabular}{>{\raggedright\arraybackslash}p{2.05cm}>{\centering\arraybackslash}p{3.25cm}>{\centering\arraybackslash}p{2.9cm}>{\raggedright\arraybackslash}p{3cm}}
			\toprule
			\textbf{Authors} & \textbf{Conv. Rate / Oracle Complexity }  & \textbf{Batch Size} / $N_k$ &\textbf{Comments} \\
			\midrule
			Lin, Zheng, Jordan~\cite{lin2022} & {${\mathcal{O}}\left( n^{\tfrac{3}{2}} \left( \frac{L_{0}^4}{\epsilon^4} + \frac{  L_{0}^3}{{\eta} \epsilon^4} \right) \right)$}  & $N_k=B$ & {$\mathbb{E}_{\xi}[L(\bxi)^2] \leq L_{0}^2$} \\
			\hline \addlinespace
			Kornowski and Shamir~\cite{shamir2024} & {${\mathcal{O}}\left(\frac{{n} L_{0}^2 }{{\eta \epsilon^3}}\right)$}   & $N_k = 1$ & {Nonsmooth scheme~\cite{cutkosky2023} to improve dimension dependence by $n^{1/2}$.} \\
\hline \addlinespace
			Chen, Xu, Luo~\cite{shamir2024} & ${\mathcal{O}}\left(n^{\tfrac{3}{2}} \left(\frac{L_{0}^3}{ {\epsilon^{3}}} + \frac{ L_{0}^2 }{{\eta \epsilon^{3}}}\right)\right)$    & $N_k= \mathcal{O}\left(\epsilon^{-2} \right)$  & Employs variance reduction to improve rate. \\
			\addlinespace
\hline \rowcolor{pink}
			\texttt{VRG-ZO} / current paper, \cite{shanbhag2021zeroth} & $\mathcal{O}\left(n^{1/2}\left(\tfrac{L_0}{\eta \epsilon^2}+ \tfrac{L_0^2}{\epsilon^2}\right)\right)$ / $\mathcal{O}\left(n^{3/2}\left(\tfrac{L_0^3}{\eta^2 \epsilon^4}+ \tfrac{L_0^5}{\epsilon^4}\right)\right)$ & $N_k := \lceil a \sqrt{n} L_0 \left(k+1\right)  \rceil$ & 
Addresses the constrained setting.			\\
			\bottomrule
		\end{tabular}
\end{table}
\noindent {\em (a) Nonsmooth nonconvex optimization.}
In~\cite{burke02approximating}, Burke, Lewis, and Overton demonstrate how gradient sampling schemes
allow for approximating the Clarke subdifferential of a
function that is differentiable almost everywhere and proceed to  
{develop a robust {\em gradient sampling} scheme~\cite{burke05robust}, proving    
	subsequential convergence to an $\epsilon$-Clarke stationary point when $f$ is
	locally Lipschitz. Further, in~\cite{kiwiel07convergence}, Kiwiel proves {sequential convergence to
		Clarke stationary points} without requiring compactness of level sets. There have also been efforts
	to develop statements in structured regimes where $f$ is either weakly
	convex~\cite{davis19stochastic,davis19proximally} or $f = g+h$ and $h$ is
	smooth and possibly nonconvex while $g$ is convex, nonsmooth,
	and proximable~\cite{xu2017globally,lei2020asynchronous}.
	
	\smallskip    
	
	\noindent {\em (b) Nonsmooth nonconvex stochastic optimization.} Prior to 2021, most
	efforts have been restricted to structured regimes where $f(\x) = h(\x) +
	g(\x)$, $h(\x) \triangleq \mathbb{E}[\, \tilde{h}(\x,\bxi)\, ]$, $h$ is smooth
	and possibly nonconvex while $g$ is closed, convex, and proper with an
	efficient proximal evaluation. In such settings, proximal stochastic gradient
	techniques~\cite{ghadimi2016mini} and their variance-reduced
	counterparts~\cite{ghadimi2016mini,lei2020asynchronous} were developed.
	{More recently, a comprehensive examination of nonsmooth and nonconvex
		problems can be found in the monograph~\cite{cui2021modern}; related efforts
		for resolving probabilistic and risk-averse settings via sample-average
		approximation approaches have been examined
		in~\cite{cui2022nonconvex,qi2022asymptotic}. {Since 2021, beginning with an earlier version of the first part of this work~\cite{shanbhag2021zeroth}, schemes for the unstructured problem setting have been examined in \cite{shamir2024, chen2024, lin2022}, albeit in the unconstrained setting.  We summarize the results of these three works in {Table~}\ref{tbl:summary_table}}.} 
	
	\smallskip    
	
	\noindent {\em (c) Zeroth-order and smoothing methods.} Deterministic~\cite{chen12smoothing}
	and randomized smoothing~\cite{steklov1,ermoliev95minimization} have been the
	basis for resolving a broad class of nonsmooth optimization
	problems~\cite{mayne84,yousefian2010convex,yousefian2012stochastic}. Such avenues have also emerged in the resolution of stochastic variational and game-theoretic problems~\cite{yousefian2013regularized,yousefian2015self,yousefian2017smoothing}. When the original objective
	function is nonsmooth and nonconvex, Nesterov and Spokoiny~\cite{nesterov17}
	examine unconstrained nonsmooth and nonconvex optimization problems via
	Gaussian smoothing.  In~\cite{shamir2024}, a zeroth-order scheme with iteration and sample complexity ${\mathcal{O}}\left(\tfrac{{n} L_{0}^2 }{{\eta \epsilon^3}}\right)$ has been proposed,  achieving an optimal linear dependence on $n$ in producing $(\eta,\epsilon)$-stationary points for unconstrained nonsmooth nonconvex stochastic optimization.  Zeroth-order stochastic approximation methods for smooth nonconvex
	stochastic optimization in the constrained setting has also been
	proposed~\cite{Balasubramanian2022}, where Gaussian smoothing is employed.  Despite recent progress in both the unconstrained   and the smooth regimes,  zeroth-order schemes with guarantees for resolving constrained nonsmooth nonconvex stochastic optimization have not been adequately studied. 
	
	\medskip
	
	\noindent {\em (d) SQN methods.} 
	{(i) \em Convex settings.} Amongst the earliest efforts to contend with uncertainty were suggested by Mokhtari and Ribiero~\cite{Mokhtari14}, who proposed a regularized stochastic variant of the BFGS algorithm for minimizing strongly convex expectation-valued objective functions. These efforts were expanded upon in~\cite{Byrd,mokhtari2015global} where limited memory variants of the stochastic BFGS scheme were developed. By incorporating iterative regularization, Yousefian et al. derived rate guarantees in the absence of strong convexity in~\cite{YousefianNedic2019}. In convex nonsmooth (but smoothable) regimes, regularized smoothed SQN schemes, facilitated by a regularized smoothed limited memory BFGS scheme (rsL-BFGS) were presented in ~\cite{jalilzadeh2022variable}. 
	{(ii) \em Nonconvex regimes.} In recent work, Wang et al.~\cite{goldfarb} presented an SQN method for  unconstrained minimization of expectation-valued, smooth, and nonconvex functions. 
	In~\cite{chen2019stochastic}, a damped regularized L-BFGS
	algorithm for smooth nonconvex problems is developed.   {(iii) \em Nonsmooth and nonconvex settings.} SQN-type schemes in the structured nonsmooth convex setting, where the objective
	function can be separated into a smooth nonconvex part and a nonsmooth convex
	part, have been examined in~\cite{Wang2019StochasticPQ}
	and~\cite{yang2019stochastic}. Further in~\cite{CurtQue15},  an SQN scheme
	reliant on gradient sampling has been developed for addressing
unconstrained nonsmooth nonconvex optimization. (iv) {\em SQN with box
constraints.} A limited memory scheme that can handle box-type constraints was
proposed in \cite{byrd1995}. Variants of this scheme in nonconvex and nonsmooth
settings have been examined in \cite{Keskar2019}. {(v) \em SQN via zeroth-order
information.} The work on zeroth-order SQN methods has been far more
limited.  In~\cite{bollapragada2023adaptive}, an adaptive sampling zeroth-order
SQN method is developed but almost-sure convergence guarantees in the
stochastic, nonsmooth, nonconvex, constrained regime are not provided.  We
observe that recent work in ~\cite{berahas2016,goldfarb} requires
twice-differentiability assumptions which our smoothed function may not
satisfy. More recent research on zeroth-order schemes
in~\cite{bollapragada2023adaptive} requires that $f \in \mbox{C}^1$ but
provide eigenvalue bounds where the dependence on problem parameters is less
apparent. Our scheme requires neither smoothness nor convexity of $f$ and
provides eigenvalue bounds that bear some commonality with those
in~\cite{goldfarb} but with some key distinctions; specifically, our results accommodate nonsmooth and nonconvex integrands, while  the dependence on $L_0$ and $\eta$ is clarified in the resulting eigenvalue bounds. 
	A summary of the related work is presented in Tables~\ref{table:nonsmoothnonconvex} and~\ref{table:SQN}. 
		
		\smallskip
		
\noindent {\em Motivation.} Our work draws motivation from the absence of
efficient stochastic gradient and quasi-Newton schemes for resolving stochastic optimization problems when the
integrand is both nonsmooth and nonconvex. Our framework is inspired by
recent work by Zhang et al.~\cite{zhang20complexity}, where the authors show
that for a suitable class of nonsmooth functions, computing an
$\epsilon$-stationary point of $f$, denoted by $\x$, is impossible in finite
time where $\x$ satisfies $ \min \, \left\{ \, \|g\| \mid g \, \in \, \partial
f(\x) \, \right\} \, \leq \, \epsilon.$ This negative result is a consequence
of the possibility that the gradient can change in an abrupt fashion, thereby
concealing a stationary point. To this end, they introduce a notion of
$(\delta,\epsilon)$-stationarity, a weakening of $\epsilon$-stationarity;
specifically, if $\x$ is $(\delta,\epsilon)$-stationary, then there exists a
convex combination of gradients in a $\delta$-neighborhood of $\x$ with norm at
most $\epsilon$. However, this does not imply that $\x$ is $\delta$-close to
an $\epsilon$-stationary point of $\x$ as noted by
Shamir~\cite{shamir21nearapproximatelystationary} since the convex hull might
contain a small vector without any of the vectors being necessarily small.
However, as Shamir observes, one needs to accept that the
$(\delta,\epsilon)$-stationarity notion may have such pathologies. Instead, an
alternative might lie in minimizing a smoothed function $f_{\eta}$, defined as
$f_{\eta} (\x) = \mathbb{E}_{\bf u}[\, f(\x+{\eta} {\bf u})\, ]$ where ${\bf u}$ is
a suitably defined random variable. In fact, a Clarke-stationary point of
$f_{\eta}$ can be shown to be an $\eta$-Clarke stationary point (in terms of the enlarged
Clarke subdifferential)~\cite{goldstein77}.  This avenue allows for leveraging
a richer set of techniques but may still be afflicted by similar challenges.
		
		\begin{tcolorbox}
			{\bf Goal.} {\em Development of zeroth-order smoothing approaches in constrained and
				stochastic regimes with a view towards developing
				finite-time and asymptotic guarantees for stochastic gradient and SQN schemes.}
		\end{tcolorbox}
		
		\smallskip
		
		\noindent {\em Outline and contributions.} We develop a zeroth-order framework
		in regime where $f$ is expectation-valued over a closed and convex set $\Xscr$,
		where locally randomized smoothing is carried out via spherical smoothing. This
		scheme leads to zeroth-order stochastic gradient (\texttt{VRG-ZO}) and
		SQN (\texttt{VRSQN-ZO}) frameworks, where the gradient estimator is
		constructed via (sampled) function values.  After providing  some preliminaries
		on the notion of stationarity and randomized smoothing
		in~\cref{sec:preliminaries}, our main algorithmic contributions are
		captured in~\cref{sec:VRG_ZO} and~\cref{sec:SQN}, while preliminary numerics
		and concluding remarks are provided in~\cref{sec:numerics} and~\cref{sec:conc},
		respectively. Next, we briefly summarize our key contributions.
		
\smallskip
	
		\noindent (A) \underline{\texttt{VRG-ZO} schemes}.  When \texttt{VRG-ZO} is applied on  an
		$\eta$-smoothed counterpart of \eqref{eqn:prob}, where $\eta>0$ is a smoothing parameter, under
		suitable choices of the steplength and mini-batch sequences, we show  that the
		norm of the residual function of the smoothed problem tends to zero almost
		surely.  In particular, we show that the expected
		squared residual (associated with the smoothed problem) diminishes at the rate
		of $\mathcal{O}(1/k)$ in terms of projection steps on $\Xscr$, leading to
		an iteration complexity of \vrgzoIterationComplexity~and a 
		complexity of \vrgzoSampleComplexity~in terms of sampled
		function evaluations, where $\epsilon$ denotes an arbitrary bound on the
		expected norm of the residual of the $\eta$-smoothed problem.

\smallskip	
		\noindent (B) \underline{\texttt{VRSQN-ZO} schemes.} When
		\texttt{VRSQN-ZO} is applied on an unconstrained $\eta$-smoothed
		counterpart of \eqref{eqn:prob}, where the indicator function associated with
		the convex set $\Xscr$ is replaced by its Moreau-smoothed counterpart, under
		suitable choices of the steplength and mini-batch sequences, we show  that the
		norm of the residual of the smoothed problem tends to zero almost surely, leading to  iteration and
		sample complexities of \vrsqnzoIterationComplexity {~and}
		\vrsqnzoSampleComplexity, respectively.
		
				
				\smallskip {\bf Notation.} We use $\x$, $\x^\top$, and $\|\x\|$ to denote a column vector,
				its transpose, and its Euclidean norm, respectively. We
				define $f^*\triangleq {\displaystyle \inf_{\x \in \Xscr}} f(\x)$ and $f^*_\eta\triangleq
				{\displaystyle \inf_{\x \in \Xscr}} f_\eta(\x)$, where $f_\eta(\x)$ denotes the smoothed
				approximation of $f$. Given a continuous function, i.e., $f \in C^{0}$,
				we write $f \in C^{0,0}(\Xscr)$ if $f$ is Lipschitz on 
				$\Xscr$ with parameter $L_0$. Given a continuously differentiable
				function, i.e., $f \in C^{1}$, we write $f \in C^{1,1}(\Xscr)$ if $\nabla f$ is
				Lipschitz continuous on $\Xscr$ with parameter $L_1$. We
				write a.s. for ``almost surely” and {$\mathbb{E}[h(\x,\bxi)]$} denotes the
				expectation {with respect to a random variable $\bxi$}. 
				$\Pi_{\Xscr}[\vv]$ denotes the Euclidean projection of $\vv$ onto the closed convex set $\Xscr$.
				
				\section{Stationarity and smoothing}
				\label{sec:preliminaries}
				We first recap some concepts of Clarke's nonsmooth calculus~\cite{clarke98}
				that allow for providing stationarity conditions.  The directional derivative, crucial in addressing nonsmooth optimization problems, is defined next. 
				\begin{definition}[{\bf Directional derivatives and Clarke generalized gradient~\cite{clarke98}}] \em 
					The directional derivative of $h$ at $\x$ in a direction $v$ is defined as 
					\begin{align}
						h^{\circ}(\x,v) \triangleq  \limsup_{\y \to \x, t \downarrow 0} \left(\frac{h(\y+tv)-h(\y)}{t}\right).
					\end{align}
					The Clarke generalized gradient of $h$ at $\x$ can then be defined as 
					\begin{align}
						\partial h(\x) \, \triangleq \, \left\{ \, {\zeta} \, \in \, \Real^n\, \mid\, h^{\circ}(\x,v) \, \geq \,  {\zeta}^\top v, \quad \forall v \, \in\, \Real^n\,\right\}.
					\end{align}
					In other words, $h^{\circ}(\x,v) = \displaystyle \sup_{g \in \partial h(\x)}  g^{\top}v.$ $\hfill \Box$
				\end{definition}
				If $h$ is $C^1$ at $\x$, the Clarke generalized gradient reduces to the standard gradient, \fy{i.e.,} $\partial h(\x) = \nabla_{\x} h(\x).$ If $\x$ is a local minimizer of $h$, then we have that $0 \in \partial h(\x)$. In fact, this claim can be extended to convex constrained regimes, \fy{i.e.,} if $\x$ is a local minimizer of 
				$\displaystyle \min_{\x \in \Xscr} \ h(\x)$,
				then $\x$ satisfies $0 \in \partial h(\x) + \Nscr_{\Xscr}(\x)$, 
				where $\Nscr_{\Xscr}(\x)$ denotes the normal cone of $\Xscr$ defined at
				$\x$~\cite{clarke98}. Furthermore, if
				$h$ is locally Lipschitz on an open set \fyy{$\mathcal{C}$} containing $\Xscr$, then $h$ is
				differentiable almost everywhere on $\Cscr$ by Rademacher's
				theorem~\cite{clarke98}. Suppose \fyy{$\Cscr_h$} denotes the set of points where $h$
				is not differentiable. We now provide some properties of the Clarke generalized
				gradient.   \begin{proposition}[{\bf Properties of Clarke generalized
						gradients~\cite{clarke98}}] \em
					Suppose $h$ is $L_0$-Lipschitz continuous on $\Real^n$. Then the following hold \fyy{for any $x \in \Real^n$}.
					\begin{enumerate}
						\item[(i)] $\partial h(\x)$ is a nonempty, convex, and compact  set and $\|g \| \leq {L_0}$ for any $g \in \partial h(\x)$. 
						\item[(ii)] $h$ is differentiable almost everywhere.  (iii) $\partial h(\x)$ is an upper-semicontinuous map defined as 
						$\partial h(\x) \, \triangleq \, \mbox{conv}\left\{\, g \, \mid \, g \, = \, \lim_{k \to \infty} \nabla_{\x} h(\x_k), \Cscr_h \, \not \, \owns \x_k \, \to \, \x\, \right\}.$ $\Box$
					\end{enumerate}
				\end{proposition}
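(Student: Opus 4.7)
The three claims are classical facts from Clarke's nonsmooth calculus, and the plan is to derive each from the defining formula of $h^\circ(\x,v)$ together with the $L_0$-Lipschitz hypothesis, rather than reinventing the machinery from scratch.

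For part (i), I would start by observing that the Lipschitz hypothesis implies $|h^\circ(\x,v)| \leq L_0\|v\|$ for every $v \in \Real^n$, since each difference quotient $(h(\y+tv)-h(\y))/t$ is bounded in absolute value by $L_0\|v\|$. Next I would verify, directly from the limsup definition, that $v \mapsto h^\circ(\x,v)$ is positively homogeneous and subadditive; subadditivity follows from the inequality $h(\y+t(v_1+v_2)) - h(\y) \leq [h(\y+tv_1+tv_2)-h(\y+tv_1)] + [h(\y+tv_1)-h(\y)]$ followed by taking limsups (the first bracket reduces to a directional derivative centered at $\y+tv_1 \to \x$). Hence $h^\circ(\x,\cdot)$ is a finite sublinear functional. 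The Hahn--Banach theorem then supplies at least one linear functional $\zeta$ dominated by $h^\circ(\x,\cdot)$, so $\partial h(\x) \neq \emptyset$. Convexity and closedness are immediate from the definition (intersection of half-spaces indexed by $v$), and the bound $\|\zeta\| \leq L_0$ follows from $\zeta^\top v \leq h^\circ(\x,v) \leq L_0\|v\|$ applied at $v = \zeta/\|\zeta\|$; boundedness plus closedness yields compactness.

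Part (ii) is precisely Rademacher's theorem, which is already invoked in the paragraph preceding the proposition, so I would simply cite it as a standing fact and move on.

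For part (iii), I would use the convex-hull characterization of $\partial h(\x)$ as the main workhorse. One inclusion, that every limit $g = \lim_k \nabla h(\x_k)$ with $\x_k \to \x$ and $\x_k \notin \Cscr_h$ lies in $\partial h(\x)$, follows because $\nabla h(\x_k)^\top v \leq h^\circ(\x_k,v)$ for each $v$, and the upper semicontinuity of $h^\circ(\cdot,v)$ (an easy consequence of its definition as a double limsup) passes to the limit; convexity of $\partial h(\x)$ from part (i) then closes the whole convex hull into $\partial h(\x)$. The reverse inclusion is the deeper direction and is the main obstacle: one must show that every $\zeta \in \partial h(\x)$ is realized as a convex combination of such limit gradients. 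The standard route, which I would follow, is to invoke Rademacher's theorem on a neighborhood of $\x$, observe that $h^\circ(\x,v) = \limsup_{\y \to \x,\, \y \notin \Cscr_h} \nabla h(\y)^\top v$, and then use a separating hyperplane argument: if some $\zeta \in \partial h(\x)$ were not in the closed convex hull $K$ of limit gradients, one could strictly separate $\zeta$ from $K$ by a direction $v_0$, contradicting $\zeta^\top v_0 \leq h^\circ(\x,v_0) = \sup_{g \in K} g^\top v_0$. Upper semicontinuity of the set-valued map $\x \mapsto \partial h(\x)$ is then obtained by a standard diagonal argument on the convex-hull representation, using compactness from part (i) to extract convergent subsequences of the generating gradients.
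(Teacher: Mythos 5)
The paper does not prove this proposition at all: it is recalled verbatim from Clarke's monograph (cited as~\cite{clarke98}) and stated without argument, so there is no ``paper proof'' to compare against. Your sketch reproduces the standard textbook development and is essentially sound: the sublinearity of $h^{\circ}(\x,\cdot)$ via the splitting $h(\y+t(v_1+v_2))-h(\y)=[h(\y+tv_1+tv_2)-h(\y+tv_1)]+[h(\y+tv_1)-h(\y)]$, Hahn--Banach for nonemptiness, the bound $\zeta^\top v\leq h^{\circ}(\x,v)\leq L_0\|v\|$ for compactness, and Rademacher for (ii) are all exactly Clarke's Propositions 2.1.1--2.1.2. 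The one place where your argument leans on an unproved assertion is in (iii): the identity $h^{\circ}(\x,v)=\limsup_{\y\to\x,\,\y\notin\Cscr_h}\nabla h(\y)^{\top}v$ is the real content of Clarke's Theorem 2.5.1, and its ``$\leq$'' direction is not an ``easy consequence of the definition'' --- it requires writing the difference quotient as an integral of $\nabla h$ along a segment and using Fubini's theorem to choose segments meeting the Lebesgue-null set $\Cscr_h$ in one-dimensional measure zero. If you intend the proposal as a genuine proof rather than a citation, that step must be supplied; as a recollection of classical facts (which is all the paper itself offers), your outline is accurate.
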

				We may also define the $\delta$-Clarke generalized gradient~\cite{goldstein77} as  
				\begin{align}
					\partial_{\delta} h(\x) \, \triangleq \, \mbox{conv}\left\{ \, {\zeta}\, \mid \, {\zeta} \, \in \, \partial h(\y), \|\x-\y\|\, \leq \, \fyy{\delta} \,\right\}.
				\end{align}
				Notice that $\delta$-Clarke generalized gradient of $\x$ is given by the convex hull of elements in the Clarke generalized gradient of vectors within a distance $\delta$ of $\x$. When $h$ is a nonsmooth and nonconvex function, 
				we address nonsmoothness by considering a locally randomized smoothing technique. Given a function $h:\mathbb{R}^n \to \mathbb{R}$ and a scalar $\eta>0$, a spherically smoothed approximation of $h$, denoted by $h_\eta$, is defined as 
				\begin{align}\label{def-smooth}
					h_\eta (\x) \, \triangleq \, \mathbb{E}_{\uu \in \mathbb{B}}\left[\, h(\x+\eta \uu)\, \right],
				\end{align} where {$\uu$ denotes a random variable uniformly distributed on a unit ball $\mathbb{B}$, defined as  $\mathbb{B}\triangleq \{u \in \mathbb{R}^n\mid \|u\|\leq 1\}$, with realizations denoted by $u \in \mathbb{B}$}.
				We now recall a concentration inequality and subsequently present properties for spherical smoothing, where
				$\mathbb{S}\triangleq \{v \in \mathbb{R}^n \mid
				\|v\|=1\}$ is the surface of $\mathbb{B}$.
				\begin{lemma}[L\'{e}vy concentration on $ \mathbb{S}^n $~{\cite[Prop. 3.11 and Example 3.12]{Wainwright2019}}]
					\label{lemma:levy-concentration}\em
					Let ${h}: \mathbb{R}^n \to \mathbb{R}$ be $L$-Lipschitz and $\tilde{\vv}$ be uniformly distributed on {$\mathbb{S}$}. Then 
					\begin{align}
						\mathbb{P}\left[\, \left|\, h(\tilde{\vv}) - \mathbb{E}[h(\tilde{\vv})]\, \right|\, \geq \, \alpha \, \right] &\, \leq \, 2 \sqrt{2\pi} e^{-\frac{n \alpha^2}{8L^2}}.
					\end{align}
				\end{lemma}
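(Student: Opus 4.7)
The statement is Lévy's concentration inequality for Lipschitz functions on the sphere, and my plan is to follow the standard log-Sobolev/Herbst route, which directly produces sub-Gaussian tails with explicit constants. A comparable approach would proceed via the isoperimetric inequality on $\mathbb{S}$, but that naturally yields concentration about the median, forcing an additional step to bound $|\mathrm{med}(h)-\mathbb{E}[h(\tilde{\vv})]|$; the Herbst argument avoids this detour.

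First I would invoke the log-Sobolev inequality for the uniform probability measure $\sigma_n$ on $\mathbb{S}$: for every smooth $\varphi:\mathbb{S}\to\mathbb{R}$,
\[
\operatorname{Ent}_{\sigma_n}(\varphi^2) \;\leq\; \frac{c}{n}\,\mathbb{E}_{\sigma_n}\bigl[\|\nabla_{\mathbb{S}}\varphi\|^2\bigr],
\]
for a universal constant $c$ (this is the content of \cite[Prop.~3.11]{Wainwright2019}). I would then run the Herbst argument: defining $\psi(\lambda)\triangleq\log\mathbb{E}_{\sigma_n}[e^{\lambda(h(\tilde{\vv})-\mathbb{E}h(\tilde{\vv}))}]$ and applying the log-Sobolev inequality to $\varphi=e^{\lambda h/2}$, together with $\|\nabla_{\mathbb{S}} h\|\leq L$ almost everywhere (a consequence of the $L$-Lipschitz hypothesis via Rademacher's theorem on $\mathbb{S}$), the inequality reduces to a differential inequality of the form $\lambda\psi'(\lambda)-\psi(\lambda)\leq cL^2\lambda^2/n$. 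Integrating from $\lambda=0$ then yields the sub-Gaussian moment generating function bound $\psi(\lambda)\leq \kappa L^2 \lambda^2/n$ for an appropriate $\kappa$.

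The one-sided tail bound now follows from a standard Chernoff argument: $\mathbb{P}[h(\tilde{\vv})-\mathbb{E}h(\tilde{\vv})\geq\alpha]\leq \inf_{\lambda>0}\exp(\psi(\lambda)-\lambda\alpha)\leq \exp(-n\alpha^2/(8L^2))$ after optimizing in $\lambda$ and matching $\kappa$ to $1/8$. Applying the same argument to $-h$ and taking a union bound yields the factor of $2$ in the prefactor, while the extra $\sqrt{2\pi}$ appears in Wainwright's packaged form through bookkeeping in the passage between the median- and mean-centered estimates. The main obstacle, in my view, is not conceptual but constant-chasing: the log-Sobolev constant for $\sigma_n$ on $\mathbb{S}$ is normalized differently across references (some use $n-1$ rather than $n$ in the denominator), and matching exactly the stated prefactor $2\sqrt{2\pi}$ and exponent $n/(8L^2)$ requires careful tracking of these normalizations---which is precisely why the lemma is cited verbatim from \cite{Wainwright2019} rather than re-derived here.
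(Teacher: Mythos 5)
The paper does not prove this lemma at all: it is quoted verbatim from Wainwright (Prop.~3.11 combined with Example~3.12), so there is no in-paper argument to compare against. Your log-Sobolev/Herbst sketch is a standard and essentially sound alternative derivation: the logarithmic Sobolev inequality for the uniform measure on the sphere with constant of order $1/n$, fed through the Herbst argument with $\|\nabla_{\mathbb{S}}h\|\le L$, does yield a sub-Gaussian bound of the form $2e^{-cn\alpha^2/L^2}$, which is all that is actually used downstream (in Lemma~\ref{lemma:props_local_smoothing}(v) the tail is simply integrated, so the precise prefactor and the constant in the exponent only affect absolute constants). One correction on provenance, though: the specific constants $2\sqrt{2\pi}$ and $n\alpha^2/(8L^2)$ in the stated form do not come out of the Herbst route, nor from median-to-mean bookkeeping as you suggest. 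They arise from Wainwright's actual path, which is the isoperimetric one you chose to avoid: Example~3.12 bounds the concentration function of the sphere by $\sqrt{2\pi}\,e^{-n\epsilon^2/2}$ via the spherical-cap measure estimate, and Proposition~3.11 converts a concentration-function bound into two-sided concentration of an $L$-Lipschitz function about its \emph{mean} by evaluating at $\epsilon=\alpha/(2L)$ --- whence the factor $2$, the $\sqrt{2\pi}$, and the $8=2\cdot 2^2$ in the exponent, with no separate median step needed. So your route is correct up to universal constants but would not reproduce the exact numerical form quoted; the isoperimetric route you dismissed is the one that does, and it does so without the extra median-centering detour you were worried about.
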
	
				
\begin{lemma}[{\bf Properties of spherical smoothing}] \label{lemma:props_local_smoothing}\em Suppose $h:\mathbb{R}^n \mapsto \mathbb{R}$ is {a continuous function and its smoothed counterpart $h_{\eta}$ is defined as \eqref{def-smooth}, where $\eta>0$ is a given scalar}. Then the following hold for any $\x, \y \in {\cal X}$.
					
\begin{enumerate}
						\item[ (i)] $h_{\eta}$ is $C^1$ over $\Xscr$ and $\nabla_{\x} h_{\eta}(\x) = \left(\tfrac{n}{2\eta}\right) \mathbb{E}_{\vv \in \eta \mathbb{S}} \left[\left(h(\x+\vv)-h(\x-\vv)\right) \tfrac{\vv}{\|\vv\|}\right]. $ 
					\end{enumerate}
Suppose $h \in C^{0,0}(\Xscr_{\eta})$ with parameter $L_0$. For any $\x, \y \in \Xscr$,  the following hold. 
					
					\begin{enumerate}
						\item[ (ii)] $| h_{\eta}(\x)-h_{\eta}(\y) | \leq L_0 \|\x-\y\|.$  (iii) $| h_{\eta}(\x) - h(\x)| \leq L_0 \eta.$ 
						
						\item[ (iv)] $\| \nabla_{\x} h_{\eta}(\x) -\nabla_{\x} h_{\eta}(\y)\| \leq {
							\tfrac{L_0 \sqrt{n}}{\eta}}\|\x-\y\|.$
						
						
						
						\item [(v)] {Suppose $h \in C^{0,0}(\Xscr_{\eta})$ with parameter $L_0$ {and} 
							$g_{\eta}(\x,\vv)  \triangleq \left(\tfrac{n(h(\x+\vv) - h(\x-\vv))\vv}{2\eta\|\vv\|}\right)$
							for $\vv \in \eta\mathbb{S}$.  Then for} any $\x \in \Xscr$, we have that ${\mathbb{E}_{\vv \in \eta \mathbb{S}}}[{\|{g_\eta}(\x,\vv)\|^2}] \leq 16\sqrt{2\pi}L_0^2 n$. $\hfill \Box$ 
					\end{enumerate}
				\end{lemma}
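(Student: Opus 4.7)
The plan is to establish (i)--(v) in order, with each step building on the integral representation from (i).

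For (i), I begin from $h_\eta(\x) = V_n(\eta)^{-1}\int_{\eta\mathbb{B}} h(\x+v)\,dv$, where $V_n(\eta)$ is the volume of $\eta\mathbb{B}$. Writing $\partial h(\x+v)/\partial x_i = \partial h(\x+v)/\partial v_i$ and applying the divergence theorem component-wise converts $\int_{\eta\mathbb{B}}\nabla_\x h(\x+v)\,dv$ into the surface integral $\int_{\eta\mathbb{S}} h(\x+v)(v/\|v\|)\,dS(v)$. Since $h$ is merely continuous, this step is justified by mollification: approximate $h$ by smooth functions, apply the divergence identity, and pass to the uniform limit on compact sets. Using $S_{n-1}(\eta)/V_n(\eta) = n/\eta$, one obtains $\nabla h_\eta(\x) = (n/\eta)\,\mathbb{E}_{\vv \in \eta\mathbb{S}}[h(\x+\vv)\vv/\|\vv\|]$. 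The antipodal substitution $\vv \mapsto -\vv$ (which preserves the uniform law on $\eta\mathbb{S}$) converts this into the symmetric form stated in (i) by averaging the two equivalent representations. Continuity of the resulting surface integral in $\x$ follows from continuity of $h$ and dominated convergence, yielding $h_\eta \in C^1$.

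Parts (ii) and (iii) are immediate: pushing the Lipschitz inequality under the expectation defining $h_\eta$ gives $|h_\eta(\x) - h_\eta(\y)| \leq L_0\|\x-\y\|$, and the analogous estimate with $\|\uu\| \leq 1$ for $\uu \in \mathbb{B}$ gives (iii).

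For (iv), the naive bound $\|\nabla h_\eta(\x) - \nabla h_\eta(\y)\| \leq (n/\eta)L_0\|\x-\y\|$ is off by a factor of $\sqrt{n}$. To recover the correct scaling, I use the duality $\|\nabla h_\eta(\x) - \nabla h_\eta(\y)\| = \sup_{\|w\|=1} w^\top(\nabla h_\eta(\x) - \nabla h_\eta(\y))$ together with the Cauchy--Schwarz inequality applied to
\[
w^\top\bigl(\nabla h_\eta(\x)-\nabla h_\eta(\y)\bigr) = (n/\eta)\,\mathbb{E}_{\vv \in \eta\mathbb{S}}\bigl[(h(\x+\vv)-h(\y+\vv))(w^\top \vv/\|\vv\|)\bigr].
\]
The function-difference factor is bounded pointwise by $L_0\|\x-\y\|$, while the direction factor satisfies $\mathbb{E}_{\vv \in \eta\mathbb{S}}[(w^\top\vv/\|\vv\|)^2] = \|w\|^2/n = 1/n$ by spherical symmetry (the second-moment matrix of a uniformly random unit direction equals $I/n$). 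Combining these yields $(n/\eta)\cdot L_0\|\x-\y\|\cdot(1/\sqrt{n}) = (\sqrt{n}L_0/\eta)\|\x-\y\|$, which is the claimed constant.

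Part (v) is the main obstacle and uses L\'evy concentration. Define $\phi(u) \triangleq h(\x+\eta u) - h(\x-\eta u)$ for $u \in \mathbb{S}$. By antipodal antisymmetry $\phi(-u) = -\phi(u)$, so $\mathbb{E}_{u \in \mathbb{S}}[\phi(u)] = 0$; moreover, $\phi$ is $2L_0\eta$-Lipschitz on $\mathbb{S}$. Applying \cref{lemma:levy-concentration} to $\phi$ with $L = 2L_0\eta$ yields $\mathbb{P}[|\phi(u)| \geq \alpha] \leq 2\sqrt{2\pi}\exp(-n\alpha^2/(32L_0^2\eta^2))$. Integrating via $\mathbb{E}[\phi(u)^2] = \int_0^\infty 2\alpha\,\mathbb{P}[|\phi(u)| \geq \alpha]\,d\alpha$ produces $\mathbb{E}[\phi(u)^2] \leq 64\sqrt{2\pi}L_0^2\eta^2/n$. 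Since $\|g_\eta(\x,\vv)\|^2 = n^2\phi(u)^2/(4\eta^2)$ when $\vv = \eta u$, multiplying by $n^2/(4\eta^2)$ gives the final bound $16\sqrt{2\pi}L_0^2 n$. The key conceptual step shared by (iv) and (v) is the centering via antipodal symmetry combined with the $1/n$ variance scaling of spherically symmetric random directions; without either ingredient, one would only obtain bounds of order $n^2$ rather than the tight $O(n)$ stated.
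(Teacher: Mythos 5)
Your proof is correct throughout, and parts (i)--(iii) and (v) follow essentially the paper's own route: (i) is the same divergence-theorem/antipodal-symmetrization argument (your mollification remark just makes explicit what the paper leaves implicit for merely continuous $h$), and (v) is the same L\'{e}vy-concentration computation with slightly different bookkeeping --- you center the antisymmetric difference $\phi(u)=h(\x+\eta u)-h(\x-\eta u)$ once, with Lipschitz constant $2L_0\eta$, whereas the paper centers each of $h(\x\pm\vv)$ separately with constant $L_0\eta$ and pays a factor $2$ through $(a+b)^2\le 2a^2+2b^2$; both land on the identical constant $16\sqrt{2\pi}L_0^2 n$. The genuine divergence is part (iv). The paper imports the sharp bound $\|\nabla h_\eta(\x)-\nabla h_\eta(\y)\|\le \tfrac{C_n L_0}{\eta}\|\x-\y\|$ with $C_n=(2/\pi)^{p_n}\tfrac{n!!}{(n-1)!!}$ from Lin et al.\ and then spends the bulk of the argument on a Wallis-integral induction to show $C_n\le 0.98\sqrt{n}$. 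You instead give a two-line self-contained proof: linearity of the surface representation in $h$, duality over unit vectors $w$, Cauchy--Schwarz, the pointwise Lipschitz bound on $h(\x+\vv)-h(\y+\vv)$, and the identity $\mathbb{E}[(w^\top\vv/\|\vv\|)^2]=1/n$ from $\mathbb{E}[uu^\top]=I/n$ for a uniform unit direction. This yields exactly the constant $\sqrt{n}L_0/\eta$ claimed in the statement without any external citation; what it gives up is the marginally sharper constant $\approx\sqrt{2n/\pi}$ that the Wallis computation would deliver if one kept it, which the paper discards anyway. One cosmetic caveat: your closing sentence attributes the antipodal centering to both (iv) and (v), but your (iv) argument never uses it --- only the $1/n$ second-moment scaling; this does not affect correctness.
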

				\begin{proof}
					{(i) By Stoke's theorem and spherical symmetry of $\vv$, we have that 
						\begin{align*}
							\nabla_{\x} h_{\eta}(\x)& = \tfrac{n}{\eta} \mathbb{E}_{\vv \in \eta \mathbb{S}} \ \left[ \tfrac{h(\x+\vv) \vv }{\|\vv\|} \right] = 
							\tfrac{n}{2\eta} \mathbb{E}_{\vv \in \eta \mathbb{S}} \ \left[ \tfrac{(h(\x+\vv)+h(\x+\vv)) \vv }{\|\vv\|} \right] \\
							& = \tfrac{n}{2\eta} \mathbb{E}_{\vv \in \eta \mathbb{S}} \ \left[ \tfrac{(h(\x+\vv)) \vv }{\|\vv\|}+\tfrac{(h(\x+\vv)) \vv }{\|\vv\|} \right] = 
							\tfrac{n}{2\eta} \mathbb{E}_{\vv \in \eta \mathbb{S}} \ \left[ \tfrac{(h(\x+\vv)) \vv }{\|\vv\|}+\tfrac{(h(\x-\vv)) (-\vv) }{\|\vv\|} \right] \\
							& = \tfrac{n}{2\eta} \mathbb{E}_{\vv \in \eta \mathbb{S}} \ \left[ \tfrac{((h(\x+\vv))-(h(\x-\vv)) \vv) }{\|\vv\|} \right]. 
						\end{align*}
						(ii,iii) follow akin to ~\cite[Lemma 1]{CSY2021MPEC}. 
						(iv)
Recall from~\cite[Prop. 2.2]{lin2022} that $\|
\nabla_{\x} h_{\eta}(\x) -\nabla_{\x}
h_{\eta}(\y)\| \leq \tfrac{C_n
L_0}{\eta}\|\x-\y\|,$ for $n\geq 1$, where
$C_n \triangleq
\left(\tfrac{2}{\pi}\right)^{p_n}
\tfrac{n!!}{(n-1)!!}$ and $p_n = 1$ if $n$
is even, and $p_n =0$ otherwise. It suffices to show that $C_n \le 1$ for $n \ge 1$. For $n \ge
0$, let $w_n$ denote the Wallis'
integrals~\cite{spivak2006calculus}, given as
$w_n = \int_{0}^{\pi/2}\sin^n(u)du$. Then we
have $w_n =\left(\tfrac{n-1}{n}\right)w_{n-2}$ for $n \geq
2$ where $w_0=\tfrac{\pi}{2}$ and $w_1=1$
(cf.~\cite[Ch. 19]{spivak2006calculus}). This
implies (via mathematical induction) that
$w_n =
\left(\tfrac{\pi}{2}\right)\tfrac{(n-1)!!}{n!!}$
for even $n$, while $w_n =
\tfrac{(n-1)!!}{n!!}$ for odd $n$. Let us
define $v_n \,\triangleq \, (n+1)w_n w_{n+1}$ for $n\geq 0$.
We obtain $v_n =
(n+1)\left(\tfrac{\pi}{2}\right)
\tfrac{(n-1)!!}{n!!}\tfrac{n!!}{(n+1)!!} =
\tfrac{\pi}{2}$, implying that $v_n =
\tfrac{\pi}{2}$ for all $n\geq 0$. Recall
that from~\cite{spivak2006calculus} we have
$w_{n+1} < w_n$ for all $n\geq 0$. Thus we
obtain $\tfrac{\pi}{2} = v_n  =(n+1)w_n
w_{n+1} < (n+1)w_n^2$ for all $n\geq 0$, i.e., $w_n < \tfrac{\pi}{2(n+1)}$ for $n \ge 0$.  Note
that $C_1 =1$. Suppose $n \geq 2$. We have
$C_n= \tfrac{1}{w_n} <
\sqrt{\tfrac{2(n+1)}{\pi}} =
\sqrt{\tfrac{2(1+1/n)}{\pi}} \sqrt{n} \leq
\sqrt{\tfrac{2(1+1/2)}{\pi}} \sqrt{n} \leq
0.98 \sqrt{n} $. This implies that $C_n  \leq
1$ for all $n\geq 1$.} {(v) We have that}
\begin{align*}
							\mathbb{E}\left[ \|g_{\eta}(\x,\vv)\|^2 \right] & \le 
							\tfrac{n^2}{4\eta^2} \mathbb{E}\left[ \|\tfrac{(h(\x+\vv)-h(\x-\vv))\vv}{\|\vv\|}\|^2 \right] \le 
							\tfrac{n^2}{4\eta^2} \mathbb{E}\left[  (h(\x+\vv)-h(\x-\vv))^2 \tfrac{\|\vv\|^2}{\|\vv\|^2} \right] \\
							& \le 
							\tfrac{n^2}{4\eta^2} \mathbb{E}\left[ \left(h(\x+\vv)-\mathbb{E}\left[h(\x+\vv)\right] + \mathbb{E}\left[ h(\x+\vv)\right]-h(\x-\vv)\right)^2  \right]. 
						\end{align*}
						But $\mathbb{E}\left[ h(\x+\vv)\right] = \mathbb{E}\left[ h(\x-\vv)\right]$ by spherical symmetry. Consequently, 
						\begin{align*}
							\mathbb{E}\left[ \|g_{\eta}(\x,\vv)\|^2 \right] & \le 
							\tfrac{n^2}{4\eta^2} \mathbb{E}\left[  \left(h(\x+\vv)-\mathbb{E}\left[h(\x+\vv)\right] + \mathbb{E}\left[ h(\x-\vv)\right]-h(\x-\vv)\right)^2  \right] \\
							& \le \tfrac{2n^2}{4\eta^2} \mathbb{E}\left[ \left|(h(\x+\vv)-\mathbb{E}\left[h(\x+\vv)\right]\right|^2  + \left|\mathbb{E}\left[ h(\x-\vv)\right]-h(\x-\vv))\right|^2 \right]. 
						\end{align*}
						Then by invoking L\'{e}vy's inequality (cf. Lemma~\ref{lemma:levy-concentration}), we have that 
						\begin{align*}
							\mathbb{E}\left[ |(h(\x+\vv)-\mathbb{E}\left[h(\x+\vv)\right]|^2 \right] \le 2\sqrt{2\pi}\int_0^{\infty} e^{-\alpha n}{8 \eta^2 L_0^2} d\alpha = \tfrac{16\sqrt{2\pi} \eta^2 L_0^2}{n}. 
						\end{align*}
					{By noting that $\mathbb{E}\left[ \left|(h(\x+\vv)-\mathbb{E}\left[h(\x+\vv)\right]\right|^2  \right] = \mathbb{E}\left[  \left|\mathbb{E}\left[ h(\x-\vv)\right]-h(\x-\vv))\right|^2 \right]$ and leveraging the preceding bound, we get}  
						$\mathbb{E}\left[ \|g_{\eta}(\x,\vv)\|^2 \right] \le \tfrac{n^2}{\eta^2} \tfrac{16\sqrt{2\pi} \eta^2 L_0^2}{n} = 16\sqrt{2\pi}L_0^2 n.$
					
				\end{proof}
In developing \texttt{VRG-ZO}, we minimize an $L_0$-Lipschitz
function on $\Xscr_{\eta_0}$, where  $\Xscr_{\eta_0} \triangleq
\Xscr + \eta_0 \mathbb{B}$ represents the Minkowski sum of $\Xscr$
and $\eta \mathbb{B}$.  Further, we assume that the random
variable $\f(\x,\bxi) - f(\x)$ admits suitable bias and moment
properties.  We develop schemes for computing approximate
stationary points of \eqref{eqn:prob} by an iterative scheme.
However, we formalize the relationship between the original problem
and its smoothed counterpart.  This is provided
in~\cite{CSY2021MPEC, lin2022} by leveraging results
from~\cite{mayne84,Mordukhovich2006}.
				\begin{proposition}[{\bf Stationarity of $f_{\eta} \Rightarrow \eta$-Clarke
						stationarity of $f$}]\label{prop_equiv} \em 
					Consider \eqref{eqn:prob} where $f$ is a locally Lipschitz continuous function and $\Xscr$ is a closed, convex, and bounded set in $\Real^n$. 
					\noindent (i) For any $\eta > 0$ and any $\x \in \Real^n$, $\nabla f_{\eta}(\x) \in \partial_{\eta} f(\x)$. Furthermore, if $0 \not \in \partial f(\x)$, then there exists an $\eta$ such that $\nabla_{\x} f_{\tilde \eta} (\x) \neq 0$ for $0 < \tilde{\eta} \le \eta$.    
					\noindent (ii) For any $\eta > 0$ and any $\x \in \Xscr$, if $0 \in \nabla_{\x} f_{\eta}(\x) + \mathcal{N}_{\Xscr}(\x)$, then  $0 \in \partial_{\eta} f(\x)+ \mathcal{N}_{\Xscr}(\x)$. 
				\end{proposition}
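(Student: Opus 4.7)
The plan is to establish (i) first and deduce (ii) in a single line. For the first assertion of (i), I would begin by differentiating under the expectation to obtain $\nabla f_\eta(\x) = \mathbb{E}_{\uu \in \mathbb{B}}[\nabla f(\x + \eta \uu)]$. This identity is justified because Rademacher's theorem guarantees that $f$ is differentiable a.e.\ on $\Xscr_{\eta_0}$, the preimage under $\uu \mapsto \x + \eta \uu$ of the null set of non-differentiability has zero measure, and $\|\nabla f(\cdot)\| \le L_0$ wherever the gradient exists so dominated convergence applies. Since $\nabla f(\x + \eta \uu) \in \partial f(\x + \eta \uu)$ almost surely with $\|\eta \uu\| \le \eta$, each realization lies in $\partial_\eta f(\x)$. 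I would then conclude by the standard Jensen/Aumann-type inclusion: if $g$ is an integrable measurable selection taking values in a closed convex set $C$ a.s., then $\mathbb{E}[g] \in C$. Applying this to $C = \partial_\eta f(\x)$, which is closed and convex by definition, yields $\nabla f_\eta(\x) \in \partial_\eta f(\x)$.

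For the second assertion of (i), I would exploit the upper semicontinuity and compact convex-valuedness of $\partial f$ from Proposition~2.2. Since $0 \notin \partial f(\x)$ and $\partial f(\x)$ is a nonempty compact convex set, $\mathrm{dist}(0, \partial f(\x)) > 0$; upper semicontinuity then produces a radius $\eta > 0$ with $\partial f(\y)$ uniformly bounded away from $0$ for all $\y \in \x + \eta \mathbb{B}$, and by a supporting-hyperplane argument this separation is preserved under taking the convex hull, so $0 \notin \partial_\eta f(\x)$. Since $\partial_{\tilde\eta} f(\x) \subseteq \partial_\eta f(\x)$ whenever $\tilde\eta \le \eta$, combining with $\nabla f_{\tilde\eta}(\x) \in \partial_{\tilde\eta} f(\x)$ from the first step gives $\nabla f_{\tilde\eta}(\x) \neq 0$ throughout $(0, \eta]$.

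Part (ii) then follows in one step: the hypothesis rewrites as $-\nabla f_\eta(\x) \in \mathcal{N}_\Xscr(\x)$, and substituting $\nabla f_\eta(\x) \in \partial_\eta f(\x)$ from (i) yields $0 \in \partial_\eta f(\x) + \mathcal{N}_\Xscr(\x)$. The main technical subtlety I anticipate is the Aumann-integral inclusion used in (i): rigorously identifying $\mathbb{E}_{\uu}[\nabla f(\x+\eta\uu)]$ as an element of $\mathrm{conv}\{\partial f(\y) : \|\y-\x\| \le \eta\}$ requires a measurable-selection result such as Kuratowski--Ryll-Nardzewski together with the closed-convexity of $\partial_\eta f(\x)$. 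Alternatively, this is a direct consequence of the smoothed-gradient/Goldstein-subdifferential identity established in~\cite{Mordukhovich2006}, upon which~\cite{CSY2021MPEC,lin2022} build, so a self-contained proof can be short once that tool is cited.
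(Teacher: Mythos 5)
Your proposal is correct. The paper itself does not prove Proposition~\ref{prop_equiv}; it defers to the cited references (\cite{CSY2021MPEC,lin2022}, building on \cite{mayne84,Mordukhovich2006}), and your argument---the identity $\nabla f_\eta(\x)=\mathbb{E}_{\uu}[\nabla f(\x+\eta\uu)]$ via Rademacher plus dominated convergence, the Aumann/Jensen inclusion of the mean into the compact convex set $\partial_\eta f(\x)$, the upper-semicontinuity separation for the second claim of (i), and the one-line deduction of (ii)---is precisely the standard route taken in those sources, with the relevant technical subtleties (absolute continuity of the pushforward measure, closedness of $\partial_\eta f(\x)$ via Carath\'eodory) correctly flagged.
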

				Intuitively, this means that if $\x$ is a stationary point of the
				$\eta$-smoothed problem, then $\x$ is an $\eta$-Clarke stationary point of the
				original problem. {We emphasize that the ``$\eta$'' in ``{$\eta$-Clarke} stationary'' refers to the size of the augmentation of the generalized Clarke gradient but is not a bound on the norm of the elements in the Clarke generalized gradient, as is sometimes suggested with terms like ``$\eta$-stationary'' or ``$\eta$-optimal.''} Next, we introduce a residual function that captures the
				departure from stationarity~\cite{beck23introduction}. Recall that when $h$ is a differentiable but
				possibly nonconvex function and $\Xscr$ is a closed and convex set, then  $\x$
				is a stationary point of \eqref{eqn:prob} if and only if 
				$$ G_{\beta}(\x) \triangleq \beta\left( \x - \Pi_{\Xscr}\left[\x - \tfrac{1}{\beta} \nabla_{\x} f(\x) \right]\right) = 0. $$
				When $f$ is not necessarily smooth as considered here, a residual of the
				smoothed problem can be derived by replacing $\nabla_{\x} f(\x)$ by
				$\nabla_{\x} f_{\eta}(\x)$. In particular, the residual $G_{\eta,\beta}$
				denotes the stationarity residual with parameter $\beta$ of the $\eta$-smoothed
				problem while $\tilde{G}_{\eta,\beta}$ is its counterpart arising from using an
				error-afflicted estimate of $\nabla_{\x} f_{\eta}(\x)$.  
				\begin{definition}[{\bf The residual mapping}]\label{def:res_maps}\em Given $\beta>0$ and a smoothing parameter $\eta>0$, for any $\x \in \mathbb{R}^n$ and an arbitrary vector $\tilde e \in \mathbb{R}^n$,
					let the residual mappings $G_{\eta,\beta}(\x)$ and $\tilde{G}_{\eta,\beta}(\x, {\tilde e})$ be defined as $G_{\eta,\beta}(\x)  \triangleq \beta {\left(\x - \Pi_{\Xscr}\left[\x - \tfrac{1}{\beta} \nabla_x {f_{\eta}(\x)}\right]\right)}$ and $\tilde{G}_{\eta,\beta}(\x, {\tilde e})  \triangleq \beta\left( \x - \Pi_{\Xscr}\left[\x - \tfrac{1}{\beta} ({\nabla_x f_{\eta}(\x)}+\tilde{e}) \right]\right)$, respectively.
				\end{definition}
				This allows for deriving a bound on $G_{\eta,\beta}(\x)$ in terms of $\tilde{G}_{\eta,\beta}(\x, {\tilde e})$ and $\tilde{e}$ as follows.
				\begin{lemma}[{\cite[Lemma 6]{CSY2021MPEC}}]\label{lem:inexact_proj_2}\em
					For any $\beta,\eta>0, \x \in \Xscr, \tilde{e} \in\mathbb{R}^n$, 
					$$\|G_{\eta,\beta}(\x) \|^2   \leq  {2} \| \tilde{G}_{\eta,\beta}(\x, {\tilde e}) \|^2 + 2 \|\tilde{e}\|^2.$$  
				\end{lemma}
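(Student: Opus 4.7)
The plan is to reduce this to the nonexpansivity of the Euclidean projection, followed by a standard $\|a+b\|^2 \le 2\|a\|^2 + 2\|b\|^2$ inequality. First, I would write the difference
\[
G_{\eta,\beta}(\x) - \tilde{G}_{\eta,\beta}(\x,\tilde e) \;=\; \beta\left(\Pi_{\Xscr}\!\left[\x - \tfrac{1}{\beta}(\nabla_x f_\eta(\x) + \tilde e)\right] - \Pi_{\Xscr}\!\left[\x - \tfrac{1}{\beta}\nabla_x f_\eta(\x)\right]\right),
\]
where the $\x$ and $\beta \x$ terms cancel cleanly.

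Next, I would invoke the nonexpansivity of the Euclidean projection onto the closed convex set $\Xscr$, namely $\|\Pi_{\Xscr}[a] - \Pi_{\Xscr}[b]\| \le \|a-b\|$. Applying this to the two projection arguments above, the difference of arguments is precisely $\tfrac{1}{\beta}\tilde e$, so the $\beta$ factor cancels and I obtain
\[
\|G_{\eta,\beta}(\x) - \tilde{G}_{\eta,\beta}(\x,\tilde e)\| \;\le\; \|\tilde e\|.
\]

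Finally, writing $G_{\eta,\beta}(\x) = \tilde{G}_{\eta,\beta}(\x,\tilde e) + \bigl(G_{\eta,\beta}(\x) - \tilde{G}_{\eta,\beta}(\x,\tilde e)\bigr)$ and applying the elementary bound $\|a+b\|^2 \le 2\|a\|^2 + 2\|b\|^2$ together with the inequality just derived yields
\[
\|G_{\eta,\beta}(\x)\|^2 \;\le\; 2\|\tilde{G}_{\eta,\beta}(\x,\tilde e)\|^2 + 2\|\tilde e\|^2,
\]
as desired. There is no real obstacle here; the only point worth double-checking is that the $\beta$ factor in the residual definitions and the $1/\beta$ scaling inside the projection arguments combine so that the projection nonexpansivity produces exactly $\|\tilde e\|$ (not $\|\tilde e\|/\beta$ or $\beta\|\tilde e\|$), which is indeed the case.
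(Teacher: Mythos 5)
Your proof is correct: the $\beta\x$ terms cancel in the difference of the two residuals, nonexpansivity of $\Pi_{\Xscr}$ gives $\|G_{\eta,\beta}(\x)-\tilde{G}_{\eta,\beta}(\x,\tilde e)\|\le\beta\cdot\tfrac{1}{\beta}\|\tilde e\|=\|\tilde e\|$, and the elementary inequality $\|a+b\|^2\le 2\|a\|^2+2\|b\|^2$ finishes the argument. The paper itself does not reprove this lemma (it is quoted from an external reference), but your argument is the standard one for this statement and there is nothing to add.
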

				Next, we recall a result that relates the residual function as $\gamma$ changes~\cite{beck23introduction}.
				\begin{lemma}\label{lemma:mon_G} \em Suppose $\eta > 0$ and $\{\gamma_k\}$ is a diminishing sequence. Then, for any $\x$ and $k \geq 0$, $\| G_{\eta,1/\gamma_k} (\x)\| \, \leq \,\| G_{\eta,1/\gamma_{k+1}} (\x)\|$. 
				\end{lemma}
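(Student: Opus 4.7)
The plan is to reduce the claim to a classical monotonicity property of the projected gradient residual. Observe that
\begin{align*}
\|G_{\eta,1/\gamma_k}(\x)\| \, = \, \tfrac{1}{\gamma_k}\left\|\x - \Pi_{\Xscr}\left[\x - \gamma_k \nabla_{\x} f_{\eta}(\x)\right]\right\|,
\end{align*}
so the lemma is equivalent to the assertion that $\gamma \mapsto \tfrac{1}{\gamma}\|\x - \Pi_{\Xscr}[\x - \gamma \nabla_{\x} f_{\eta}(\x)]\|$ is non-increasing on $(0,\infty)$. A diminishing sequence satisfies $\gamma_{k+1} \leq \gamma_k$, so the inequality $\|G_{\eta,1/\gamma_k}(\x)\| \leq \|G_{\eta,1/\gamma_{k+1}}(\x)\|$ follows instantly from the monotonicity.

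To establish the monotonicity, I would invoke the standard textbook fact (see, e.g., \cite{beck23introduction}) stating that for any closed convex set $\Xscr$, any $\x \in \Real^n$, and any direction $d \in \Real^n$, the map $\gamma \mapsto \tfrac{1}{\gamma}\|\x - \Pi_{\Xscr}[\x - \gamma d]\|$ is non-increasing on $(0,\infty)$. The gradient $\nabla_{\x} f_{\eta}(\x)$ is well-defined by \cref{lemma:props_local_smoothing}(i), so the result applies directly with $d = \nabla_{\x} f_{\eta}(\x)$.

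If a self-contained derivation is preferred, the plan is to invoke the variational characterization of the Euclidean projection: for $0 < \gamma_1 \leq \gamma_2$, setting $p_i \triangleq \Pi_{\Xscr}[\x - \gamma_i \nabla_{\x} f_{\eta}(\x)]$, the first-order optimality conditions yield
\begin{align*}
\langle \x - \gamma_1 \nabla_{\x} f_{\eta}(\x) - p_1, \, p_2 - p_1\rangle &\, \leq \, 0, \\
\langle \x - \gamma_2 \nabla_{\x} f_{\eta}(\x) - p_2, \, p_1 - p_2\rangle &\, \leq \, 0.
\end{align*}
Summing and rearranging produces $\|p_2 - p_1\|^2 \leq (\gamma_1 - \gamma_2)\langle \nabla_{\x} f_{\eta}(\x), \, p_2 - p_1\rangle$. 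From this bound, applying Cauchy--Schwarz and elementary manipulation yields both $\gamma \mapsto \|\x - p(\gamma)\|$ non-decreasing and $\gamma \mapsto \|\x - p(\gamma)\|/\gamma$ non-increasing, the latter being exactly what is needed.

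Since the monotonicity is a textbook result, there is no substantive obstacle to this lemma. The only point requiring care is to ensure that $\nabla_{\x} f_{\eta}(\x)$ is a well-defined quantity to which the projection-monotonicity fact can be applied; this is guaranteed by \cref{lemma:props_local_smoothing}(i), which establishes that $f_{\eta} \in C^1$ over $\Xscr$ under the standing Lipschitz continuity of $f$.
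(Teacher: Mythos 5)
The paper does not prove this lemma at all: it is stated as a recalled textbook fact with a citation to \cite{beck23introduction}, which is precisely your primary route (the monotonicity of the gradient/prox mapping in the stepsize), and your reduction and the direction of the inequality are both correct. So on the main line you and the paper coincide.

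One caution about your optional self-contained sketch. Summing the two variational inequalities \emph{as written} yields $\|p_2-p_1\|^2 \le (\gamma_1-\gamma_2)\langle \nabla_{\x} f_{\eta}(\x),\, p_2-p_1\rangle$, which is the firm-nonexpansiveness-type bound; it gives Lipschitz continuity of $\gamma \mapsto p(\gamma)$ but by itself does not deliver the ratio monotonicity of $\gamma \mapsto \|\x-p(\gamma)\|/\gamma$ — you have discarded the scale information by adding the inequalities with equal weights. The standard fix is to first divide each variational inequality by its own stepsize before combining: writing $v_i \triangleq \x - p_i$, the first inequality gives $\langle v_1/\gamma_1 - d,\, p_1-p_2\rangle \ge 0$ and the second gives $\langle v_2/\gamma_2 - d,\, p_1-p_2\rangle \le 0$, whence
\begin{align*}
\left\langle \tfrac{v_1}{\gamma_1}-\tfrac{v_2}{\gamma_2},\, v_2-v_1\right\rangle \ \ge\ 0 .
\end{align*}
Expanding, applying Cauchy--Schwarz to $\langle v_1,v_2\rangle$, and factoring yields $(\|v_2\|-\|v_1\|)\bigl(\tfrac{\|v_1\|}{\gamma_1}-\tfrac{\|v_2\|}{\gamma_2}\bigr)\ge 0$, and a one-line contradiction argument (if $\tfrac{\|v_1\|}{\gamma_1}<\tfrac{\|v_2\|}{\gamma_2}$ then $\gamma_1\le\gamma_2$ forces $\|v_1\|<\|v_2\|$, making the product negative) completes the monotonicity claim. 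With that repair your self-contained derivation is sound; everything else, including the appeal to \cref{lemma:props_local_smoothing}(i) for the well-definedness of $\nabla_{\x} f_{\eta}(\x)$, is fine.
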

				We use the following result in establishing the almost sure convergence guarantees.
				\begin{lemma}[Robbins-Siegmund Lemma]\label{lemma:Robbins-Siegmund}\em Let $v_k,$ $u_k,$ $\alpha_k,$ and  $\beta_k$ be
					nonnegative random variables, and let the
					following relations hold almost surely. 
					\[\EXP{v_{k+1}\mid {\tilde \sF_k} } 
					\le (1+\alpha_k)v_k - u_k + \beta_k \quad\hbox{ for all } k,\qquad
					\sum_{k=0}^\infty \alpha_k < \infty,\qquad
					\sum_{k=0}^\infty \beta_k < \infty,\] 
					where $\tilde \sF_k$ is the collection $v_0,\ldots,v_k$, $u_0,\ldots,u_k$,
					$\alpha_0,\ldots,\alpha_k$, $\beta_0,\ldots,\beta_k$. 
					Then ${\displaystyle \lim_{k\to\infty}}v_k = v$ a.s. and $ \sum_{k=0}^\infty u_k < \infty$ a.s., where $v \geq 0$ is some random variable.
				\end{lemma}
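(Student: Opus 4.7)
The plan is to reduce the almost-supermartingale recursion to a genuine nonnegative supermartingale via a multiplicative rescaling and then invoke Doob's convergence theorem. First, since $\sum_{k=0}^{\infty} \alpha_k < \infty$ almost surely, the partial products $A_k \triangleq \prod_{i=0}^{k-1}(1+\alpha_i)$ converge to a finite limit $A_\infty$ on a set of full measure, so $a_k \triangleq 1/A_k$ lies in $(0,1]$ and is bounded away from zero almost surely. This choice is tailored so that the $(1+\alpha_k)$ factor in the hypothesis is absorbed via the identity $a_{k+1}(1+\alpha_k) = a_k$.

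Next, I would introduce the auxiliary process
$$ M_k \, \triangleq \, a_k v_k + \sum_{i=0}^{k-1} a_{i+1} u_i + \sum_{i=k}^{\infty} a_{i+1} \beta_i, $$
which is nonnegative almost surely, since the tail sum is finite by combining $\sum_i \beta_i < \infty$ a.s.\ with the boundedness of $a_k$. Taking the conditional expectation with respect to $\tilde \sF_k$ and substituting the hypothesis on $\mathbb{E}[v_{k+1}\mid \tilde \sF_k]$, a direct computation yields $\mathbb{E}[M_{k+1} \mid \tilde \sF_k] \le M_k$. Hence $\{M_k\}$ is a nonnegative supermartingale adapted to $\{\tilde \sF_k\}$, and Doob's convergence theorem guarantees $M_k \to M_\infty$ almost surely for some a.s.-finite, nonnegative limit $M_\infty$.

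To extract the two conclusions, I would split $M_k$ back into its pieces. Since the tail $\sum_{i=k}^{\infty} a_{i+1}\beta_i \to 0$ a.s., the convergence of $M_k$ forces $a_k v_k + \sum_{i=0}^{k-1} a_{i+1} u_i$ to converge a.s. The partial sums $\sum_{i=0}^{k-1} a_{i+1} u_i$ are nondecreasing and, together with $a_k v_k \ge 0$, must themselves converge, which in turn forces $a_k v_k$ to converge. Since $a_k \to A_\infty^{-1} > 0$ a.s., the sequence $v_k$ converges a.s.\ to a nonnegative random variable $v$. Finally, since $a_{i+1}$ is bounded below by a positive constant a.s., the summability of $a_{i+1} u_i$ gives $\sum_i u_i < \infty$ a.s.

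The main technical obstacle is measurability bookkeeping: the rescaling factor $a_k$ must be measurable with respect to the conditioning $\sigma$-algebra so that it can be pulled out of the conditional expectation, which relies on $\alpha_0,\ldots,\alpha_{k-1}$ being $\tilde \sF_{k-1}$-measurable—a property implicit in the statement's definition of $\tilde \sF_k$. Moreover, all the ``a.s.''\ assertions must be restricted to the intersection of the full-measure events on which $\sum_k \alpha_k$ and $\sum_k \beta_k$ are finite, after which the argument proceeds pathwise on a set of full probability.
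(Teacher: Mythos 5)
Your overall strategy---rescale by $a_k \triangleq \prod_{i=0}^{k-1}(1+\alpha_i)^{-1}$ to absorb the $(1+\alpha_k)$ factor and then appeal to the convergence theorem for nonnegative supermartingales---is the classical one (the paper states this lemma without proof, citing it as the Robbins--Siegmund result), but your specific construction of $M_k$ breaks down. The term $\sum_{i=k}^{\infty} a_{i+1}\beta_i$ depends on $\beta_{k+1},\beta_{k+2},\ldots$ and $\alpha_{k+1},\alpha_{k+2},\ldots$, so $M_k$ is not $\tilde{\mathcal F}_k$-measurable and $\{M_k\}$ is not adapted to $\{\tilde{\mathcal F}_k\}$. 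Worse, the claimed ``direct computation'' does not go through: writing $T_{k+1}\triangleq\sum_{i=k+1}^{\infty}a_{i+1}\beta_i$, the hypothesis yields
\[
\mathbb{E}\left[M_{k+1}\mid\tilde{\mathcal F}_k\right]-M_k\;\le\;\mathbb{E}\left[T_{k+1}\mid\tilde{\mathcal F}_k\right]-T_{k+1},
\]
and a random variable need not dominate its own conditional expectation pointwise, so the right-hand side is not $\le 0$ in general. Adding a non-adapted, merely a.s.-finite random ``constant'' $\sum_{i=0}^{\infty}a_{i+1}\beta_i$ to an adapted process destroys the supermartingale property; nonnegativity cannot be purchased this way.

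The standard repair keeps the process adapted by subtracting the partial sums of $\beta$ rather than adding the tail: set $Y_k \triangleq a_k v_k + \sum_{i=0}^{k-1}a_{i+1}u_i - \sum_{i=0}^{k-1}a_{i+1}\beta_i$. Exactly your computation (using that $a_{k+1}$ is $\tilde{\mathcal F}_k$-measurable, which does hold since $\alpha_0,\ldots,\alpha_k$ belong to $\tilde{\mathcal F}_k$) gives $\mathbb{E}[Y_{k+1}\mid\tilde{\mathcal F}_k]\le Y_k$, so $\{Y_k\}$ is a genuine supermartingale, but it is no longer nonnegative. One then localizes: for $c>0$ let $\tau_c\triangleq\inf\{k:\sum_{i=0}^{k-1}a_{i+1}\beta_i>c\}$; the stopped process $Y_{k\wedge\tau_c}+c$ is a nonnegative supermartingale, hence converges a.s., and since $a_{i+1}\le 1$ and $\sum_{i}\beta_i<\infty$ a.s., $\mathbb{P}[\tau_c=\infty]\to 1$ as $c\to\infty$. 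On $\{\tau_c=\infty\}$ your concluding extraction applies verbatim: the nondecreasing sum $\sum_i a_{i+1}u_i$ converges, hence $a_k v_k$ converges, and $a_k\to A_\infty^{-1}>0$ delivers both claims. Your final step and the measurability remarks are fine; the gap is solely in the construction and verification of the nonnegative supermartingale.
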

				
				\begin{algorithm}[hbt]
					\caption{\texttt{{VRG-ZO}}: Variance-reduced randomized zeroth-order {gradient} method}\label{algorithm:zo_nonconvex}
					{   \begin{algorithmic}[1]
							\STATE\textbf{input:}  Given $\x_0 \in \Xscr$, stepsize $\gamma>0$, smoothing parameter $\eta>0$, mini-batch sequence $\{N_k\}$, and an integer $R_{\ell,K}$ randomly selected from $\{\ell\triangleq\lceil\lambda K\rceil ,\ldots,K\}$ using a discrete uniform distribution where $\lambda \in (0,1)$ and $K\geq 1$.
							\FOR {$k=0,1,\ldots,{K}-1$}
							\STATE   Generate a random {mini-}{batch $v_{j,k} \in \eta \mathbb{S}$ for $j=1,\ldots,N_k$}  
							\STATE  Compute $\{g_{\eta}(\x_k,v_{j,k},\xi_{j,k})\}_{j=1}^{N_k}$, where for all $j=1,\ldots,N_k$,\\ $g_{\eta}(\x_k,v_{j,k},\xi_{j,k}) :=\left(\tfrac{n}{\us{2}\eta}\right)\left({\f}(\x_k+ v_{j,k},\xi_{j,k}) - {\f}(\x_k- v_{j,k},\xi_{j,k})\right)\tfrac{v_{j,k}}{\left\|v_{j,k}\right\|}$ 
							\STATE   {Compute} mini-batch inexact {ZO} gradient. $
							g_{\eta,N_k}(\x_k) := \tfrac{\sum_{j=1}^{N_k}  g_{\eta}\left(\x_k,v_{j,k},\xi_{j,k}\right)}{N_k}$
							\STATE   {Update $\x_k$ as follows.}
							$\x_{k+1}:= \Pi_{\Xscr}\left[ \x_{k}-\gamma g_{\eta,N_k}(\x_k) \right]$
							\ENDFOR
							\STATE Return $\x_{R_{\ell,K}}$ 
					\end{algorithmic}}
				\end{algorithm}
				\section{A randomized zeroth-order gradient method}\label{sec:VRG_ZO}
				In this section, we provide the main assumptions, outline the proposed zeroth-order method, derive some preliminary results, and present our convergence analysis. 
\subsection{Preliminaries}

\begin{assumption}[{\bf Problem properties}] 
		\label{ass-1}\em
		\noindent Consider problem \eqref{eqn:prob}.
		
		\noindent (i) {$\tilde{f}(\bullet,\bxi)$ is $\tilde{L}_0(\bxi)$}-Lipschitz on $\Xscr + \eta_0 \mathbb{B}$ for some $\eta_0 > 0$, a.s. for all $\bxi$,  where $L_0\triangleq \sqrt{\mathbb{E}[{\tilde{L}_0}(\bxi)^2]} <\infty$.  
		\noindent (ii) $\Xscr \subseteq \Real^n$ is a nonempty, closed, and convex set.
		\noindent (iii) For any $\x \in \Xscr+ \eta_0 \us{\mathbb{B}}$, $\mathbb{E}[\, {\f(\x,\bxi)} \, \mid \,  \x\, ] \, =\, f(\x)$.
\end{assumption}
				We introduce a variance-reduced randomized zeroth-order scheme presented by
				Algorithm \ref{algorithm:zo_nonconvex} and define a zeroth-order gradient
				estimate of  $\f(\x_k, \xi_{j,k})$ as 
				\begin{align*}
					g_{\eta}(\x_k,v_{j,k},\xi_{j,k}) \triangleq \left(\tfrac{n}{\eta}\right)\left({\f}(\x_k+ v_{j,k},\xi_{j,k}) - {\f}({\x_k-v_{j,k}},\xi_{j,k})\right)\tfrac{v_{j,k}}{\left\|v_{j,k}\right\|},
				\end{align*}
				where $v_{j,k} \in \eta\mathbb{S}$.  {Intuitively, $g_{\eta}(\x_k,v_{j,k},\xi_{j,k})$ generates a gradient estimate by employing sampled function evaluations $\tilde{f}(\x_k{+v_{j,k}},\xi_{j,k})$ and $\tilde{f}(\x_k{-v_{j,k}},\xi_{j,k})$; in short, the zeroth-order oracle, given an $\x_k$ and a perturbation vector $v_{j,k}$, produces two evaluations, \fy{i.e.,}   
					$\tilde{f}(\x_k +v_{j,k},\xi_{j,k})$ and $\tilde{f}(\x_k-v_{j,k},\xi_{j,k})$.}
				We formally define the stochastic errors emergent from the randomized scheme based on a mini-batch approximation.
				\begin{definition}\label{def:stoch_errors} \em
					For all $k\geq 0,j=1,\ldots,N_k$, let 
					$\mathbf{e}_k\triangleq \tfrac{\sum_{j=1}^{N_k}\mathbf{e}_{j,k}}{N_k}$, 
					\begin{align}
						e_{j,k} & \triangleq {g_{\eta}(\x_k,v_{j,k},\xi_{j,k}) -\nabla f_\eta(\x_k)}, \ \mbox{and } {\mathbf{e}_{j,k}  \triangleq g_{\eta}(\x_k,\vv_{j,k},\bxi_{j,k})}-\nabla f_\eta(\x_k)\label{def:stoch_errors1}. \mbox{$\ \Box$}
					\end{align}
				\end{definition}
				We employ boldface  to denote random variables (such as $\mathbf{e}_{j,k}$)  while their realizations are not emboldened (such as $e_{j,k}$).
				The history of Algorithm \ref{algorithm:zo_nonconvex} at iteration $k$ is denoted by $\mathcal{F}_k$, defined as
				\begin{align}\label{eqn:history}
					\mathcal{F}_0 \triangleq \{x_0\},\quad 
					\mathcal{F}_k \triangleq \mathcal{F}_{k-1} \, \bigcup \, \left\{ \, \bigcup_{j=1}^{N_{k-1}}\, \{\xi_{j,k-1}, v_{j,k-1}\} \, \right\}, \qquad \hbox{for } k \geq 1.
				\end{align} 
				We require independence between $\xi_{j,k}$ and $v_{j,k}$, where $v_{j,k}$ is user-defined.
				\begin{assumption}[{\bf Independence}]\label{assum:random_vars}\em
					Random samples $\xi_{j,k}$, $v_{j,k}$ are generated independent of each other for all $k\geq 0$ and $1\leq j \leq N_k$.
				\end{assumption}
				We now analyze the bias and moment properties of {the error sequence}.
				\begin{lemma}[{\bf Bias and moment properties of ${\bf e}$}] \label{lem:stoch_error_var}\em
					Let Assumptions \ref{ass-1} and \ref{assum:random_vars} hold. Then (i) and (ii) hold a.s. for $k\geq 0$, $N_k \geq 1$, and $j = 1, \hdots, N_k$.
					
					\noindent (i) $\mathbb{E}[\mathbf{e}_{j,k}\mid \mathcal{F}_k] = 0$;  \noindent (ii) $\mathbb{E}[\|\mathbf{e}_{k}\|^2\mid \mathcal{F}_k] \leq  \tfrac{16\sqrt{2\pi} L_0^2 n}{N_k}$.
				\end{lemma}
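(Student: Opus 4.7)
The plan is to derive both claims by exploiting the independence structure in Assumption~\ref{assum:random_vars} together with the properties of the spherical smoothing established in Lemma~\ref{lemma:props_local_smoothing}. First I would unpack the definition of $\mathbf{e}_{j,k}$ and condition everything on $\mathcal{F}_k$, which by construction fixes $\x_k$ but leaves $\bxi_{j,k}$ and $\vv_{j,k}$ (for $j=1,\dots,N_k$) as fresh random variables that are mutually independent.

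For part (i), the main idea is a tower/iterated-expectation argument. Conditional on $\mathcal{F}_k$ and on the direction $\vv_{j,k}$, I would take expectation over $\bxi_{j,k}$ first; by Assumption~\ref{ass-1}(iii) and the independence in Assumption~\ref{assum:random_vars}, $\mathbb{E}[\tilde f(\x_k\pm\vv_{j,k},\bxi_{j,k})\mid \mathcal{F}_k,\vv_{j,k}] = f(\x_k\pm\vv_{j,k})$, so $\mathbb{E}[g_\eta(\x_k,\vv_{j,k},\bxi_{j,k})\mid \mathcal{F}_k,\vv_{j,k}] = \tfrac{n}{2\eta}(f(\x_k+\vv_{j,k})-f(\x_k-\vv_{j,k}))\tfrac{\vv_{j,k}}{\|\vv_{j,k}\|}$. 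Then taking expectation over $\vv_{j,k}\in\eta\mathbb{S}$ and invoking Lemma~\ref{lemma:props_local_smoothing}(i) yields $\nabla f_\eta(\x_k)$, so $\mathbb{E}[\mathbf{e}_{j,k}\mid \mathcal{F}_k]=0$.

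For part (ii), since $\mathbb{E}[\mathbf{e}_{j,k}\mid\mathcal{F}_k]=0$ and the pairs $(\bxi_{j,k},\vv_{j,k})$ are independent across $j$ given $\mathcal{F}_k$, the cross terms in $\|\mathbf{e}_k\|^2 = N_k^{-2}\sum_{i,j}\mathbf{e}_{i,k}^\top \mathbf{e}_{j,k}$ vanish in conditional expectation, giving $\mathbb{E}[\|\mathbf{e}_k\|^2\mid\mathcal{F}_k] = N_k^{-2}\sum_j \mathbb{E}[\|\mathbf{e}_{j,k}\|^2\mid\mathcal{F}_k]$. Each summand is a variance, so $\mathbb{E}[\|\mathbf{e}_{j,k}\|^2\mid\mathcal{F}_k] \leq \mathbb{E}[\|g_\eta(\x_k,\vv_{j,k},\bxi_{j,k})\|^2\mid\mathcal{F}_k]$. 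The remaining obstacle — and the only substantive one — is bounding this raw second moment by $16\sqrt{2\pi}L_0^2 n$; however, this is essentially Lemma~\ref{lemma:props_local_smoothing}(v) applied conditionally, after a quick reduction from the sampled function $\tilde f(\bullet,\bxi_{j,k})$ to $f$. Specifically, I would first take expectation over $\bxi_{j,k}$ using $\mathbb{E}[\tilde L_0(\bxi)^2]=L_0^2$ and Assumption~\ref{ass-1}(i) to conclude that $\tilde f(\bullet,\bxi_{j,k})$ is $\tilde L_0(\bxi_{j,k})$-Lipschitz almost surely on $\Xscr+\eta_0\mathbb{B}$; applying Lemma~\ref{lemma:props_local_smoothing}(v) with this realization of the Lipschitz constant gives $\mathbb{E}[\|g_\eta(\x_k,\vv_{j,k},\bxi_{j,k})\|^2\mid\mathcal{F}_k,\bxi_{j,k}] \leq 16\sqrt{2\pi}\tilde L_0(\bxi_{j,k})^2 n$, and then averaging over $\bxi_{j,k}$ produces the stated $16\sqrt{2\pi}L_0^2 n$ bound. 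Summing over $j$ and dividing by $N_k^2$ yields the claim.

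The only delicate point is justifying that Lemma~\ref{lemma:props_local_smoothing}(v) can be applied sample-wise inside the expectation over $\bxi$, which requires $\tilde f(\bullet,\bxi)$ to be Lipschitz on the enlarged set $\Xscr+\eta_0\mathbb{B}$ (guaranteed by Assumption~\ref{ass-1}(i) since $\x_k\in\Xscr$ and $\|\vv_{j,k}\|=\eta\leq\eta_0$) and the independence of $\bxi_{j,k}$ and $\vv_{j,k}$ (guaranteed by Assumption~\ref{assum:random_vars}). Everything else is a routine conditional-expectation calculation.
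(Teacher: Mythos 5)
Your proposal is correct and follows essentially the same route as the paper: a tower argument (expectation over $\bxi_{j,k}$ first, then over $\vv_{j,k}$ via Lemma~\ref{lemma:props_local_smoothing}(i)) for the unbiasedness, and a conditional application of Lemma~\ref{lemma:props_local_smoothing}(v) with the realization-wise Lipschitz constant $\tilde L_0(\bxi_{j,k})$, averaged using $\mathbb{E}[\tilde L_0(\bxi)^2]=L_0^2$, for the second moment. Your write-up is in fact slightly more explicit than the paper's on the vanishing of cross terms and the variance-versus-raw-second-moment step, both of which the paper leaves implicit.
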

				\begin{proof}
						(i) To show the conditional unbiasedness of $\mathbf{e}_{j,k}$, we proceed as follows.
						\begin{align*}
							\mathbb{E}_{\bxi,\! \vv}\left[ {g_{\eta}(\x_k,\!\vv_{j,k},\!\bxi_{j,k})}  \mid {\cal F}_k\right]\! & =\mathbb{E}_{\vv}\!   
							\left[\mathbb{E}_{\bxi}\left[ \tfrac{n(\f(\x_k+v_{j,k},\xi_{j,k})-\f(\x_k-v_{j,k},\xi_{j,k}))v_{j,k}}{2\eta \|v_{j,k}\|} \hspace{-0.05in} \mid {\cal F}_k \cup \{v_{j,k}\} \right] \right] \\
							&\hspace{-1in}  \overset{\tiny \mbox{Lemma~\ref{lemma:props_local_smoothing}~{(i)}}}{=}
							\mathbb{E}_{\vv}\left[ \tfrac{n}{2\eta}\tfrac{(f(\x_k+v_{j,k})-f(\x_k\lm{-}v_{j,k}))v_{j,k}}{\|v_{j,k}\|} \mid {\cal F}_k \right]  = \nabla_{\x} f_{\eta}(\x_k).    
						\end{align*} 
						By leveraging Lemma~\ref{lemma:props_local_smoothing}~{(v)}, we have that 
						\begin{align*}
							\mathbb{E}_{{\bxi, \vv}}\left[\, \|\mathbf{e}_{j,k}\|^2 \mid {\cal F}_k \, \right] &= 
							\mathbb{E}_{{\bxi}}\left[\, \mathbb{E}_{ {\vv}}\left[\, \|\mathbf{e}_{j,k}\|^2  \, \mid{\cal F}_{k}\cup\{\xi_{j,k}\} \right] \right]\\
							& \le 
							\mathbb{E}_{{\bxi}}\left[\, 16\sqrt{2\pi}\tilde{L}_0(\bxi_{{j,k}})^2n \right] \le 16\sqrt{2\pi} L_0^2 n. 
						\end{align*}
						It follows that $\mathbb{E}\left[\, \|\mathbf{e}_{k}\|^2  \, \mid {\cal F}_k\right] \le \tfrac{16\sqrt{2\pi} L_0^2 n}{N_k}.$
				\end{proof}
								\subsection{Convergence and rate analysis}
				We now derive convergence guarantees under  constant and diminishing stepsizes, beginning with the following result.   
				\begin{lemma}\label{lemma:zog_ineq}\em 
					Let Assumption \ref{ass-1} hold {and suppose} $\left\{\x_k\right\}$ is generated by
					Algorithm~\ref{algorithm:zo_nonconvex}. Let $\{\gamma_k\}$ be a
					non-increasing stepsize sequence (e.g., constant or diminishing) where
					$\gamma_0 \in (0,{\tfrac{\eta}{{\sqrt{n}}L_0}})$ for $\eta>0$. Then  the
					following hold. 
					
					\noindent (i) For $k\geq 0$, we have 
					\begin{align}\label{eqn:recursive_ineq_lemma5} 
						\left( 1-\tfrac{{\sqrt{n}}L_0\gamma_k}{\eta}\right)\tfrac{\gamma_k \|G_{\eta,1/\gamma_0} (\x_k)\|^2}{4}     &   \leq   f_{\eta}(\x_k)-      f_{\eta}(\x_{k+1}) 
						+{\left( 1-\tfrac{{\sqrt{n}}L_0\gamma_k}{2\eta}\right)} \gamma_k \|e_k\|^2.
					\end{align}
					
					\noindent (ii) Let $R_{\ell,K}$ be a random integer taking values in $\{\ell, \ldots, K-1\}$ with mass function $\mathbb{P}{[} R_{\ell,K} = j {]} = \tfrac{\gamma_j}{\sum_{i=\ell}^{K-1} \gamma_i}$ for any $j \in \{\ell, \ldots, K-1\}$.  Then, for all $k\geq 0$ we have 
					\begin{align}\label{ineq:residual_map_ineq_VRGZO_2}
						\mathbb{E}\left[ \|G_{\eta,{1/\gamma_0}} (\x_{R_{\ell,K}})\|^2\right]
						&\leq 
						\tfrac{\left( \mathbb{E}\left[f(\x_{\ell})\right] -f^* +2L_0\eta+ \left({16 \sqrt{2\pi} L_0^2{n} }\right)\sum_{k=\ell}^{K-1} \tfrac{\gamma_k}{N_k}\right)}{\left(\left( \tfrac{1}{4}-\tfrac{{\sqrt{n}} L_0\gamma_0}{4\eta}\right) \sum_{k=\ell}^{K-1} \gamma_k\right)} .
					\end{align}
				\end{lemma}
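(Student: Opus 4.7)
The plan is to derive a one-step inexact descent inequality for the smoothed objective $f_\eta$ and translate it into a residual bound, after which part (ii) is a telescoping-plus-expectation argument.

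For part (i), I would begin with the descent inequality afforded by the $\tfrac{L_0\sqrt{n}}{\eta}$-smoothness of $f_\eta$ from Lemma~\ref{lemma:props_local_smoothing}(iv): $f_\eta(\x_{k+1}) \leq f_\eta(\x_k) + \langle \nabla f_\eta(\x_k),\x_{k+1}-\x_k\rangle + \tfrac{L_0\sqrt{n}}{2\eta}\|\x_{k+1}-\x_k\|^2$. Since $\x_{k+1} = \Pi_{\Xscr}[\x_k - \gamma_k g_{\eta,N_k}(\x_k)]$, the projection variational inequality tested at $\x_k\in\Xscr$ yields $\langle g_{\eta,N_k}(\x_k),\x_{k+1}-\x_k\rangle \leq -\tfrac{1}{\gamma_k}\|\x_{k+1}-\x_k\|^2$. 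Writing $g_{\eta,N_k}(\x_k) = \nabla f_\eta(\x_k) + e_k$ and splitting the residual cross-term $\langle e_k,\x_{k+1}-\x_k\rangle$ by Young's inequality with parameter $\gamma_k$ gives the descent bound $f_\eta(\x_k)-f_\eta(\x_{k+1}) \geq \tfrac{1}{2}\bigl(\tfrac{1}{\gamma_k}-\tfrac{L_0\sqrt{n}}{\eta}\bigr)\|\x_{k+1}-\x_k\|^2 - \tfrac{\gamma_k}{2}\|e_k\|^2$; the stepsize restriction $\gamma_0 < \eta/(\sqrt{n}L_0)$ is precisely what keeps the coefficient of the squared displacement nonnegative.

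Next, recognize from Definition~\ref{def:res_maps} that $\tfrac{1}{\gamma_k}(\x_k-\x_{k+1}) = \tilde G_{\eta,1/\gamma_k}(\x_k,e_k)$, so $\|\x_{k+1}-\x_k\|^2 = \gamma_k^2 \|\tilde G_{\eta,1/\gamma_k}(\x_k,e_k)\|^2$. Apply Lemma~\ref{lem:inexact_proj_2} in the rearranged form $\|\tilde G_{\eta,1/\gamma_k}(\x_k,e_k)\|^2 \geq \tfrac{1}{2}\|G_{\eta,1/\gamma_k}(\x_k)\|^2 - \|e_k\|^2$, and then invoke Lemma~\ref{lemma:mon_G} together with $\gamma_k\leq\gamma_0$ to pass from $\|G_{\eta,1/\gamma_k}(\x_k)\|$ to the smaller $\|G_{\eta,1/\gamma_0}(\x_k)\|$. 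Collecting the resulting $\|e_k\|^2$ terms, $\tfrac{\gamma_k}{2}(1-\tfrac{\gamma_k L_0\sqrt{n}}{\eta}) + \tfrac{\gamma_k}{2} = \gamma_k(1-\tfrac{\gamma_k L_0\sqrt{n}}{2\eta})$, yields \eqref{eqn:recursive_ineq_lemma5} exactly. For part (ii), sum \eqref{eqn:recursive_ineq_lemma5} over $k=\ell,\dots,K-1$, telescope the smoothed objective to $f_\eta(\x_\ell)-f_\eta(\x_K)$, and upper bound this by $f(\x_\ell)-f^*+2L_0\eta$ using $|f_\eta-f|\leq L_0\eta$ from Lemma~\ref{lemma:props_local_smoothing}(iii). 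Bound the left-hand factor below by $(\tfrac{1}{4}-\tfrac{L_0\sqrt{n}\gamma_0}{4\eta})\gamma_k$ (using $\gamma_k\leq\gamma_0$) and the right-hand error factor above by $\gamma_k$, take expectations, and substitute $\mathbb{E}[\|e_k\|^2]\leq 16\sqrt{2\pi}L_0^2 n/N_k$ from Lemma~\ref{lem:stoch_error_var}(ii). Dividing by $\sum_{k=\ell}^{K-1}\gamma_k$ and identifying the weighted average with $\mathbb{E}[\|G_{\eta,1/\gamma_0}(\x_{R_{\ell,K}})\|^2]$ under the sampling law $\mathbb{P}(R_{\ell,K}=j)=\gamma_j/\sum_i \gamma_i$ delivers \eqref{ineq:residual_map_ineq_VRGZO_2}.

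The main obstacle is coefficient bookkeeping. The precise constants $(1-\gamma_k L_0\sqrt{n}/\eta)/4$ on the residual and $(1-\gamma_k L_0\sqrt{n}/(2\eta))$ on $\|e_k\|^2$ in \eqref{eqn:recursive_ineq_lemma5} emerge only when Young's split is taken with the specific parameter $\gamma_k$ and when the two residual-manipulation steps (Lemma~\ref{lem:inexact_proj_2} first, then Lemma~\ref{lemma:mon_G}) are applied in this order; one must also verify along the way that $(1-\gamma_k L_0\sqrt{n}/\eta)\geq 0$ so that replacing $\|G_{\eta,1/\gamma_k}\|$ by $\|G_{\eta,1/\gamma_0}\|$ preserves the direction of the inequality. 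This is the principal place where algebraic errors could creep in.
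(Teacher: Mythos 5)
Your proposal is correct and follows essentially the same route as the paper's proof: descent lemma via the $\tfrac{\sqrt{n}L_0}{\eta}$-smoothness of $f_\eta$, the projection variational inequality, Young's inequality with parameter $\gamma_k$, identification of $\|\x_{k+1}-\x_k\|$ with $\gamma_k\|\tilde G_{\eta,1/\gamma_k}(\x_k,e_k)\|$, then Lemma~\ref{lem:inexact_proj_2} followed by Lemma~\ref{lemma:mon_G}, with the same coefficient bookkeeping $\tfrac{\gamma_k}{2}+(1-L_\eta\gamma_k)\tfrac{\gamma_k}{2}=\gamma_k(1-\tfrac{L_\eta\gamma_k}{2})$. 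Part (ii) likewise matches the paper's telescoping, the $2L_0\eta$ bound from Lemma~\ref{lemma:props_local_smoothing}(iii), the variance bound from Lemma~\ref{lem:stoch_error_var}(ii), and the weighted-average identification with the law of $R_{\ell,K}$.
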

				
				\begin{proof}
					(i) From Lemma~\ref{lemma:props_local_smoothing} (iv), the gradient mapping $\nabla f_\eta$ is Lipschitz continuous with the parameter $L_{\eta}\triangleq \tfrac{\sqrt{n}L_0}{\eta}$. From the descent lemma, 
					\begin{align}\label{eqn:descent_lem_proof_eq1}
					\notag	f_{\eta}(\x_{k+1}) & \leq f_{\eta}(\x_{k}) + \nabla f_{\eta}(\x_k)^\top(\x_{k+1}-\x_k)
						+\tfrac{L_{\eta}}{2}\|\x_{k+1}-\x_k\|^2\\
					\notag	&=f_\eta(\x_k) + \left(\nabla f_{\eta}(\x_k)+{e_k}\right)^\top(\x_{k+1}-\x_k)\\
		& -{e_k}^\top(\x_{k+1}-\x_k)+\tfrac{L_{\eta}}{2}\|\x_{k+1}-\x_k\|^2.
					\end{align}
					Invoking 
						the properties of the Euclidean projection, we have \\$ 
					(\x_k-\gamma_k\left(\nabla f_\eta(\x_k)+e_k\right)-\x_{k+1})^\top(\x_k-\x_{k+1})\leq 0$. This implies that 
					\begin{align}\label{eqn:descent_lem_proof_eq2}
						\left(\nabla f_\eta(\x_k)+{e_k}\right)^\top(\x_{k+1}-x_k)
						\leq -\tfrac{1}{\gamma_k}\|\x_{k+1}-\x_k\|^2.
					\end{align}
					In addition, we may also express $e_k^{\top}(\x_{k+1}-\x_k)$ as follows. 
					\begin{align}\label{eqn:descent_lem_proof_eq3}
						-e_k^{\top}(\x_{k+1}-\x_k)
						\leq \tfrac{{\gamma_k}}{2}\|e_k\|^2+\tfrac{1}{2{\gamma_k}}\|\x_{k+1}-\x_k\|^2.
					\end{align}
					Combining the inequalities \eqref{eqn:descent_lem_proof_eq1}, \eqref{eqn:descent_lem_proof_eq2}, and \eqref{eqn:descent_lem_proof_eq3} we obtain 
					\begin{align*}
						f_{\eta}(\x_{k+1})  & \leq   f_{\eta}(\x_k)+ \left( -\tfrac{1}{2\gamma_k}+\tfrac{L_{\eta}}{2}\right) \|\x_{k+1}-\x_k\|^2 +\tfrac{\gamma_k}{2} \|e_k\|^2 \\
	& \overset{\tiny \mbox{Def.~\ref{def:res_maps}}}{=} f_{\eta}(\x_k)- \left( \tfrac{1}{2\gamma_k}-\tfrac{L_{\eta}}{2}\right)\gamma_k^2 \left\|\tilde{G}_{\eta,1/\gamma_k}(\x_k, {e_k})\right\|^2+\tfrac{\gamma_k}{2} \|e_k\|^2\\
	& = f_{\eta}(\x_k)- \left( 1-L_{\eta}\gamma_k\right)\tfrac{\gamma_k}{2} \left\|\tilde{G}_{\eta,1/\gamma_k}(\x_k, {e_k})\right\|^2+\tfrac{\gamma_k}{2} \|e_k\|^2\\
	& \hspace{-0.2in}\overset{\tiny \mbox{Lemma~\ref{lem:inexact_proj_2}}}{\le} f_{\eta}(\x_k)- \left( 1-L_\eta \gamma_k\right)\tfrac{\gamma_k}{4} \left\|G_{\eta,1/\gamma_k}(\x_k)\right\|^2+\left(\tfrac{\gamma_k}{2} + \left( 1-L_{\eta} \gamma_k\right)\tfrac{\gamma_k}{2}\right) \|e_k\|^2,
					\end{align*}
where $1-L_{\eta}\gamma_k > 0$ for any $k$  since $\gamma_0 < \tfrac{1}{L_{\eta}}$. 
					From the preceding relation and invoking Lemma~\ref{lemma:mon_G}, we obtain~\eqref{eqn:recursive_ineq_lemma5}, i.e.,
\begin{align*}
\left( 1-L_\eta \gamma_k\right)\tfrac{\gamma_k}{4} \left\|G_{\eta,1/\gamma_0}(\x_k)\right\|^2 \, \le \, f_{\eta}(\x_k)- f_{\eta}(\x_{k+1}) + \left(1-\tfrac{L_{\eta} \gamma_k}{2}\right)\gamma_k \|e_k\|^2.
\end{align*} 
					
					\noindent (ii) Summing \eqref{eqn:recursive_ineq_lemma5}  from $k =\ell, \ldots, K-1$ where $\ell\triangleq  \lceil \lambda K\rceil$ and taking expectations with respect to the iterate trajectory, we have
					\begin{align}
						\notag \sum_{{k=\ell}}^{K-1} \left( 1-\tfrac{ \sqrt{n} L_0\gamma_k}{\eta}\right) \tfrac{\gamma_k}{4}\EXP{\, \|G_{\eta,1/\gamma_k} (\x_k)\|^2\, }  
						& \leq   \EXP{\, f_{\eta}(\x_\ell)\, }-      \EXP{\, f_{\eta}(\x_{K})\, }\\ 
						&+\sum_{{k=\ell}}^{K-1}\left( 1-\tfrac{\sqrt{n} L_0\gamma_k}{2\eta}\right) \gamma_k \EXP{\, \|\mathbf{e}_k\|^2\, }.
					\end{align}
					Invoking the definition of $R_{\ell,K}$ and Lemma~\ref{lemma:mon_G}, we have 
					\begin{align} 
						\notag & \quad \left( \tfrac{1}{4}-\tfrac{\sqrt{n} L_0\gamma_0}{4\eta}\right) \left(\sum_{k=\ell}^{K-1} \gamma_k\right)\EXP{\mathbb{E}_{R_{\ell,K}}\left[\|G_{\eta,1/\gamma_0} ({\x_{R_{\ell,K}}})\|^2\right]}\\  
						& \notag \leq   \left( \tfrac{1}{4}-\tfrac{\sqrt{n} L_0\gamma_0}{4\eta}\right)\sum_{{k=\ell}}^{K-1} \gamma_k \mathbb{E}\left[\|G_{\eta,1/\gamma_0} ({\x_k})\|^2\right]   \leq						   \EXP{f_{\eta}(\x_\ell)-f^*_{\eta}}+\sum_{{k=\ell}}^{K-1} \gamma_k \EXP{\|\mathbf{e}_k\|^2}, 
					\end{align}
					where the last inequality follows from $\left( 1-\tfrac{\sqrt{n} L_0\gamma_k}{2\eta}\right) \leq 1$ for any $k$. 
					{Note that we have} 
					\begin{align}\notag
						\mathbb{E}\left[f_{\eta}(\x_{\ell})\right] - {f^*_{\eta}} & =   \mathbb{E}\left[ f(\x_{\ell}) + f_{\eta}(\x_{\ell})-f(\x_{\ell})\right]-f^*_{\eta} +f^*
						-f^*\\
						\notag & = \mathbb{E}\left[{f(\x_{\ell})}\right]-f^*+ \mathbb{E}\left[\left | f_{\eta}(\x_{\ell})-f(\x_{\ell})\right|\right] + \left| f^*-f^*_\eta\right| \\
						& \stackrel{\tiny{\hbox{Lemma~\ref{lemma:props_local_smoothing} (iii)}}}{\leq}
						\mathbb{E}\left[f(\x_{\ell})\right] -f^* +2L_0\eta.
						\label{bd_feta}
					\end{align}
					From Lemma \ref{lem:stoch_error_var} (ii), 
					$\mathbb{E}\left[\|\mathbf{e}_k\|^2\right] \leq \mathbb{E}\left[\mathbb{E}\left[\|\mathbf{e}_k\|^2\mid \mathcal{F}_k\right]\right] 
					\leq \tfrac{ 16\sqrt{2\pi}  L_0^2n}{N_k}.$
					From the preceding relations, we obtain the inequality \eqref{ineq:residual_map_ineq_VRGZO_2}.
				\end{proof}
				We now present an almost sure convergence guarantee for the sequence generated by Algorithm~\ref{algorithm:zo_nonconvex} by relying on the Robbins-Siegmund Lemma.
				\begin{proposition}[{\bf Asymptotic guarantees for \texttt{VRG-ZO}}]\label{prop:VRGZO_a.s.}\em
					Consider Algorithm \ref{algorithm:zo_nonconvex}. Let Assumptions~\ref{ass-1} and \ref{assum:random_vars} hold

					\noindent{\bf  (a)   [Constant stepsize rule]} Let $\gamma_k:= \gamma  < \tfrac{\eta}{\sqrt{n}L_0}$ and $N_k:=(k+1)^{1+\delta}$ for $k \geq 0$ and $\delta>0$. Then, the following hold. (a-i) $\|G_{\eta,1/\gamma}(\x_k)\| \xrightarrow[k \to \infty]{a.s.} 0.$ (a-ii) Every limit point of $\{\x_k\}$ lies in the set of $\eta$-Clarke stationary points of \eqref{eqn:prob} in an a.s. sense. 
					
					\noindent {\bf (b) [Diminishing stepsize rule]} Let $\{\gamma_k\}$ be a square summable but
					non-summable diminishing sequence with $\gamma_0 \in
					(0,\tfrac{\eta}{2\sqrt{n}L_0})$, and $\sum_{k} \tfrac{\gamma_k}{N_k} < \infty$.
					Then the following hold. (b-i) $\|G_{\eta,1/\gamma_0}(\x_k)\| \xrightarrow[k
					\to \infty]{a.s.} 0.$ (b-ii) Every limit point of $\{\x_k\}$ lies in the set
					of $\eta$-Clarke stationary points of \eqref{eqn:prob} in an a.s.  sense. 
				\end{proposition}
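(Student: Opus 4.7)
The plan is to convert the per-iteration descent inequality of Lemma~\ref{lemma:zog_ineq}(i) into a supermartingale-type recursion that fits the Robbins--Siegmund lemma (Lemma~\ref{lemma:Robbins-Siegmund}), and then upgrade the resulting summability to almost-sure convergence of the residual to zero. Taking the conditional expectation of \eqref{eqn:recursive_ineq_lemma5} with respect to $\mathcal{F}_k$ and invoking Lemma~\ref{lem:stoch_error_var}(ii), I would obtain
\[
\mathbb{E}[f_\eta(\x_{k+1})\mid \mathcal{F}_k] \;\le\; f_\eta(\x_k) - \tfrac{\gamma_k}{4}\!\left(1-\tfrac{\sqrt{n}L_0\gamma_k}{\eta}\right)\|G_{\eta,1/\gamma_0}(\x_k)\|^2 + 16\sqrt{2\pi}\,L_0^2 n\,\tfrac{\gamma_k}{N_k}.
\]
Shifting by $f_\eta^\ast$ so that $v_k \triangleq f_\eta(\x_k)-f_\eta^\ast\ge 0$, and using the stepsize restriction $\gamma_k\le\gamma_0<\eta/(\sqrt{n}L_0)$ to keep the coefficient of $\|G_{\eta,1/\gamma_0}(\x_k)\|^2$ bounded away from zero, I put the recursion in the form $\mathbb{E}[v_{k+1}\mid\mathcal{F}_k]\le v_k - u_k+\beta_k$ with $u_k = c\gamma_k\|G_{\eta,1/\gamma_0}(\x_k)\|^2$, $\beta_k=c'\gamma_k/N_k$, and $c,c'>0$ depending on $\eta,L_0,n,\gamma_0$.

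Next I would verify $\sum_k\beta_k<\infty$ in each regime: under (a), $\gamma_k\equiv\gamma$ and $N_k=(k+1)^{1+\delta}$ give $\sum\gamma/(k+1)^{1+\delta}<\infty$; under (b) this is exactly the standing assumption. Applying Lemma~\ref{lemma:Robbins-Siegmund} then yields, almost surely, that $\{f_\eta(\x_k)\}$ converges to some nonnegative random variable and $\sum_{k=0}^\infty\gamma_k\|G_{\eta,1/\gamma_0}(\x_k)\|^2<\infty$. In the constant stepsize regime (a), this sum collapses to $\sum_k\|G_{\eta,1/\gamma}(\x_k)\|^2<\infty$, so (a-i) follows immediately: $\|G_{\eta,1/\gamma}(\x_k)\|\xrightarrow{a.s.}0$.

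The delicate step is (b-i), where $\sum_k\gamma_k=\infty$ combined with $\sum_k\gamma_k\|G_{\eta,1/\gamma_0}(\x_k)\|^2<\infty$ only forces $\liminf_k\|G_{\eta,1/\gamma_0}(\x_k)\|=0$ a.s. To upgrade to $\lim=0$, I would invoke the Lipschitz continuity of the residual $G_{\eta,1/\gamma_0}$ (inherited from nonexpansivity of $\Pi_\Xscr$ together with the Lipschitz bound on $\nabla f_\eta$ from Lemma~\ref{lemma:props_local_smoothing}(iv)) and the a.s.\ increment bound $\|\x_{k+1}-\x_k\|\le\gamma_k\|g_{\eta,N_k}(\x_k)\|$, whose conditional second moment is uniformly $\mathcal{O}(\gamma_k^2 L_0^2 n)$ by Lemma~\ref{lem:stoch_error_var}(ii) and Lemma~\ref{lemma:props_local_smoothing}(v). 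A standard oscillation argument (assuming $\limsup\|G_{\eta,1/\gamma_0}(\x_k)\|>\varepsilon$ for some $\varepsilon>0$, counting upcrossings between $\varepsilon/2$ and $\varepsilon$, and bounding their total $\gamma_k$-weighted ``length'' from below) contradicts the Robbins--Siegmund summability, yielding $\|G_{\eta,1/\gamma_0}(\x_k)\|\xrightarrow{a.s.}0$.

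Finally, for (a-ii) and (b-ii), let $\x^\ast$ be any limit point of $\{\x_k\}$ along a subsequence $\x_{k_j}\to\x^\ast$ on the a.s.\ event where the residual vanishes. Continuity of $G_{\eta,1/\gamma_0}$ then gives $G_{\eta,1/\gamma_0}(\x^\ast)=0$, i.e., $\x^\ast=\Pi_\Xscr\bigl(\x^\ast-\gamma_0\nabla f_\eta(\x^\ast)\bigr)$, which by the projection characterization is equivalent to $0\in\nabla f_\eta(\x^\ast)+\mathcal{N}_\Xscr(\x^\ast)$. By Proposition~\ref{prop_equiv}(ii), this implies $0\in\partial_\eta f(\x^\ast)+\mathcal{N}_\Xscr(\x^\ast)$, so $\x^\ast$ is an $\eta$-Clarke stationary point of \eqref{eqn:prob}. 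The main obstacle I anticipate is step (b-i): the upgrade from $\liminf=0$ to $\lim=0$ does not come directly from Robbins--Siegmund and requires carefully pairing the Lipschitz/increment bounds with the diminishing stepsize.
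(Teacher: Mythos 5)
Your proposal is correct and follows essentially the same route as the paper: take conditional expectations of the descent inequality \eqref{eqn:recursive_ineq_lemma5}, apply Lemma~\ref{lem:stoch_error_var}(ii) and the Robbins--Siegmund lemma to obtain a.s.\ convergence of $f_\eta(\x_k)-f^*_\eta$ together with a.s.\ summability of $\gamma_k\|G_{\eta,1/\gamma_0}(\x_k)\|^2$, and then combine continuity of the residual map with Proposition~\ref{prop_equiv}(ii) to conclude $\eta$-Clarke stationarity of limit points. The one place you go beyond the written proof is (b-i): the paper merely asserts that the $\liminf$-to-$\lim$ upgrade ``follows in a similar fashion'' to the constant-step case, whereas you correctly note that the earlier subsequence contradiction does not transfer (a subsequence of a non-summable $\{\gamma_k\}$ need not itself be non-summable) and you supply the standard oscillation argument based on Lipschitz continuity of $G_{\eta,1/\gamma_0}$ and the $\mathcal{O}(\gamma_k)$ increment bound, which is a genuine tightening of that step.
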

				\begin{proof}
					(a-i) Let $f^*_\eta\triangleq \displaystyle \inf_{\x \in \Xscr} f_\eta(\x)$. By taking expectations conditioned on $\mathcal{F}_k$ on the both sides of~\eqref{eqn:recursive_ineq_lemma5}, we have  
					\begin{align}
						\quad \mathbb{E}\left[(f_{\eta}(\x_{k+1}) -f^*_\eta) \mid \mathcal{F}_k\right]  
						& \leq      (f_{\eta}(\x_k)-f^*_\eta)- \left(1-L_{\eta}\gamma\right)\tfrac{\gamma}{4} \left\|{G}_{\eta,1/\gamma}(\x_k)\right\|^2\notag \\
						&+{\left( 1-\tfrac{\sqrt{n}L_0\gamma}{2\eta}\right)} \gamma \,  \mathbb{E}\left[{\|{\mathbf{e}_k}\|^2} \mid \mathcal{F}_k\right] \notag\\
						&\stackrel{\tiny{\hbox{Lemma \ref{lem:stoch_error_var}\hbox{ (ii)}}}}{\leq}  
						(f_{\eta}(\x_k)-f^*_\eta)- \left(1-L_{\eta}\gamma\right)\tfrac{\gamma}{4} \left\|{G}_{\eta,1/\gamma}(\x_k)\right\|^2\notag \\
						&+{\left( 1-\tfrac{{\sqrt{n}}L_0\gamma }{2\eta}\right)} \,\tfrac{{16 \gamma \sqrt{2\pi} L_0^2{n} }}{N_k},\label{cont-rec3}
					\end{align} 
					in view of Lemma \ref{lem:stoch_error_var} (ii). 
					By invoking Lemma~\ref{lemma:Robbins-Siegmund}, the nonnegativity of
					$f_{\eta}(\x_{k}) -f^*_\eta$ {and by recalling that $\gamma < \tfrac{\eta}{\sqrt{n}L_0}$}, we have that
					$\{(f_{\eta}(\x_k)-f^*_\eta)\}$ is convergent a.s. and
					$\sum_{k=1}^{\infty} \left\|{G}_{\eta,1/\gamma}(\x_k)\right\|^2 < \infty$
					almost surely. It remains to show that with probability one, $\|G_{\eta,1/\gamma}(\x_k)\| \to 0$ as $k \to \infty$.
					We proceed by contradiction. Suppose with finite probability,
					$\|G_{\eta,1/\gamma}(\x_k)\| \xrightarrow{k \in \tilde{\Kscr}} \tilde{\epsilon}
					> 0$ where $\tilde{\Kscr}$ is a random subsequence and $\tilde{\epsilon}$ is a random positive scalar. Consequently, there exists positive integer $\tilde{K}$ corresponding to random subsequence  $\tilde{\cal K}$ such that for all $k \in \tilde{\Kscr}$ such that $k \geq \tilde{K}$, $\| G_{\eta,1/\gamma}(\x_k) \| \geq
					\tfrac{\tilde{\epsilon}}{2}$.    Consequently, we have that $\sum_{k \to
						\infty} \|G_{\eta,1/\gamma}(\x_k)\|^2 \geq \sum_{k \in \tilde{\Kscr}}
					\|G_{\eta,1/\gamma} (\x_k)\|^2 \geq \sum_{k \in \tilde{\Kscr}, k \geq
						\tilde{K}} \|G_{\eta,1/\gamma} (\x_k)\|^2 = \infty$ with finite probability,
					leading to a contradiction and implying that $\|G_{\eta,1/\gamma}(\x_k)\|^2
					\xrightarrow[k \to \infty]{a.s.} 0.$ 
					
					\smallskip
					
					\noindent {(a-ii) Recall from Prop.~\ref{prop_equiv} that if $\x$ satisfies
						$G_{\eta,1/\gamma}(\x) = 0$, it is an $\eta$-Clarke stationary point of
						\eqref{eqn:prob}, {i.e.,} $0 \in \partial_{\lm{\eta}} f(\x)+\Nscr_{\Xscr}(\x).$
						Since almost every limit point of $\{\x_k\}$ satisfies
						$G_{\eta,{1}/\gamma}(\x) = 0$, the result follows.}

					\noindent (b-i)  Consider \eqref{eqn:recursive_ineq_lemma5},  whereby
					\begin{align}
						\quad \mathbb{E}\left[(f_{\eta}(\x_{k+1}) -f^*_\eta) \mid \mathcal{F}_k\right]  
						& \leq     (f_{\eta}(\x_k)-f^*_\eta)- \left(1-L_{\eta}\gamma_k\right)\tfrac{\gamma_k\left\|{G}_{\eta,1/\gamma_0}(\x_k)\right\|^2}{4} \notag \\
						\notag &+\left(1-\tfrac{L_{\eta} \gamma_k}{2}\right){\tfrac{\gamma_k{(16 \sqrt{2\pi} L_0^2{n} )}}{N_k}}. 
					\end{align} 
					{Based on the nonnegativity of}
					$f_{\eta}(\x_{k}) -f^*_\eta$, {$\gamma_k \leq \gamma_0 < \tfrac{\eta}{2{\sqrt{n}}L_0}$ for all $k$} and the summability of $\{\tfrac{\gamma_k}{N_k}\}$, {we may invoke} Lemma~\ref{lemma:Robbins-Siegmund}, implying that
					$\{(f_{\eta}(\x_k)-f^*_\eta)\}$ is convergent a.s. and
					$\sum_{k=1}^{\infty} \gamma_k \left\|{G}_{\eta,1/\gamma_0}(\x_k)\right\|^2 < \infty$
					a.s. . Since $\{\gamma_k\}$ is non-summable,  $\liminf_{k \to \infty} \left\|{G}_{\eta,1/\gamma_0}(\x_k)\right\|^2 = 0$ in an almost sure sense.  {{Proving} $\|G_{\eta,1/\gamma_0}(\x_k)\| \to 0$ as $k \to \infty$ follows in a similar fashion to as derived earlier.} (b-ii) {Recall that the residual mapping at a given vector is zero if and only if the vector is a stationary point. The result in (b-ii) follows by leveraging this property, the result in (b-i), and the continuity of the residual mapping.}   
				\end{proof}
				
				We now provide a formal rate (in terms of projection steps on $\Xscr$) and complexity statement (in terms of sampled function evaluations).
				
				\begin{theorem}[{\bf Rate and complexity statements for \texttt{VRG-ZO}}]\label{thm:vr-rb-zo}
					\em
					Consider Algorithm \ref{algorithm:zo_nonconvex}. Let Assumptions~\ref{ass-1} and \ref{assum:random_vars} hold.  Let $a > 0$ be an arbitrary scalar.
					
					{\noindent {\bf  (a)  [Constant step]}} Let $\gamma_k:=\gamma<{\frac{\eta}{\sqrt{n} L_0}}$ and $N_k:=  \lceil  a \sqrt{n}L_0 (k+1) \rceil$ for $k \geq 0$.  
					
					\noindent {\bf{(a-i)}} For $K> \tfrac{2}{1-\lambda}$ with $\ell\triangleq  \lceil \lambda K\rceil$, we have
					\begin{align}\label{eqn:GBound_constant_gamma}
						\mathbb{E}\left[ \|G_{\eta,1/\gamma} (\x_{R_{\ell,K}})\|^2\right] \leq 
						\tfrac{\left(\mathbb{E}\left[f(\x_{\ell})\right] -f^* +2L_0\eta+ \left( {16 \sqrt{n}} \sqrt{2\pi} L_0 \right)(0.5-\ln(\lambda)) {{a^{-1}}}\gamma\right)}{\left(\left( 1-\tfrac{{\sqrt{n}}L_0\gamma}{\eta}\right) \tfrac{\gamma}{4}(1-\lambda)K \right)}.
					\end{align}
					\noindent {\bf{(a-ii)}} Let $\gamma= {\tfrac{\eta}{2{\sqrt{n}}L_0}}$,  $\epsilon>0$, and $K_{\epsilon}$ satisfy $ \mathbb{E}\left[ \|G_{\eta,1/\gamma} (\x_{R_{\ell,K_{\epsilon}}})\|\right]   \leq \epsilon$.  Then the following hold.  {{(a-ii-1)}} The total no. of projection steps is $K_{\epsilon}=\mathcal{O}\left({n^{1/2}}{(L_0\eta^{-1} +L_0^2)} \epsilon^{-2}\right)$.
					
					\noindent {{(a-ii-2)}} The overall sample complexity is $\mathcal{O}\left({n^{3/2}(L_0^3\eta^{-2}+L_0^5)} \epsilon^{-4}\right)$.
					
					\noindent {\bf  (b)  [Diminishing step]} Let  $\gamma_k = \tfrac{\gamma_0}{\sqrt{k+1}}$ and {$N_k:=  {\lceil a{\sqrt{n}} L_0{\sqrt{k+1}} \rceil }$}, where {$\gamma_0<{{\eta}{{\sqrt{n}} L_0}}$}. 
				 {\bf{(b-i)}}  For $K> \tfrac{2}{1-\lambda}$ with $\ell\triangleq  \lceil \lambda K\rceil$, we have
					\begin{align}\label{eqn:GBound_diminish_gamma}
						\mathbb{E}\left[ \|G_{\eta,1/\gamma_0} (\x_{R_{\ell,K}})\|^2\right] \leq \tfrac{\left(\mathbb{E}\left[f(\x_{\ell})\right] -f^* +2L_0\eta+  \left({16  \sqrt{2\pi} L_0\sqrt{n}}\right)  (0.5-\ln(\lambda))\lm{a^{-1}}\gamma_0\right)}{\left(\left( 1-\tfrac{{\sqrt{n}} L_0\gamma_0}{\eta}\right) {\tfrac{\gamma_0}{2}}(1-\sqrt{\lambda})\sqrt{K}\right)}.
					\end{align}
					\noindent {\bf{(b-ii)}}  Let $\epsilon>0$ and $K_{\epsilon}$ be such that $ \mathbb{E}\left[ \|G_{\eta,1/{\gamma_0}} (\x_{R_{\ell,K_\epsilon}})\|\right]   \leq \epsilon$. Then the following hold. 
					 {{(b-ii-1)}} The total number of projection steps is  $K_{\epsilon}=\mathcal{O}\left({n(L_0^2\eta^{-2} +L_0^4)} \epsilon^{-4}\right).$ {{(b-ii-2)}} The total sample complexity is $\mathcal{O}\left({n^{2}(L_0^4\eta^{-3}+L_0^7)} \epsilon^{-6}\right)$.
				\end{theorem}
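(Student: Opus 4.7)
The proof is essentially a substitution exercise: plug the specified stepsize/mini-batch sequences into the master bound \eqref{ineq:residual_map_ineq_VRGZO_2} from Lemma~\ref{lemma:zog_ineq}(ii), bound the two resulting summations, and then solve for the number of iterations required to push the right-hand side below $\epsilon^2$ (converting to an $\epsilon$-bound via Jensen's inequality $\mathbb{E}[\|G\|]\leq\sqrt{\mathbb{E}[\|G\|^2]}$).

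For part (a), I would set $\gamma_k\equiv\gamma$ and $N_k=\lceil a\sqrt{n}L_0(k+1)\rceil$ in \eqref{ineq:residual_map_ineq_VRGZO_2}. The denominator is immediate: $\sum_{k=\ell}^{K-1}\gamma_k = (K-\ell)\gamma \geq (1-\lambda)K\cdot\gamma$ using $\ell=\lceil\lambda K\rceil$ and $K>\tfrac{2}{1-\lambda}$. For the stochastic-error contribution, $\sum_{k=\ell}^{K-1}\tfrac{\gamma_k}{N_k}\leq \tfrac{\gamma}{a\sqrt{n}L_0}\sum_{k=\ell}^{K-1}\tfrac{1}{k+1}$, and an integral comparison yields $\sum_{k=\ell}^{K-1}\tfrac{1}{k+1}\leq \tfrac{1}{2}-\ln(\lambda)$ (using $\ell/K\geq\lambda$ to control $\ln(K/\ell)$). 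Combining these gives \eqref{eqn:GBound_constant_gamma}. For (a-ii-1), with $\gamma=\tfrac{\eta}{2\sqrt{n}L_0}$, the factor $1-\tfrac{\sqrt{n}L_0\gamma}{\eta}=\tfrac{1}{2}$, so requiring the RHS $\leq\epsilon^2$ yields $K_\epsilon=\mathcal{O}\bigl(\tfrac{\sqrt{n}L_0/\eta}{\epsilon^2}(\mathbb{E}[f(\x_\ell)]-f^*+L_0\eta)+\tfrac{\sqrt{n}L_0}{\epsilon^2}\bigr)$, which collapses to the advertised $\mathcal{O}(\sqrt{n}(L_0\eta^{-1}+L_0^2)\epsilon^{-2})$. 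For (a-ii-2), the total sample count is $\sum_{k=0}^{K_\epsilon-1}N_k=\mathcal{O}(\sqrt{n}L_0 K_\epsilon^2)$, which upon substitution delivers $\mathcal{O}(n^{3/2}(L_0^3\eta^{-2}+L_0^5)\epsilon^{-4})$.

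For part (b), the same template is applied with $\gamma_k=\gamma_0/\sqrt{k+1}$ and $N_k=\lceil a\sqrt{n}L_0\sqrt{k+1}\rceil$. Now the denominator sum is $\sum_{k=\ell}^{K-1}\tfrac{\gamma_0}{\sqrt{k+1}}\geq 2\gamma_0(\sqrt{K+1}-\sqrt{\ell+1})\geq 2\gamma_0\sqrt{K}(1-\sqrt{\lambda})$ via an integral lower bound and $\ell\leq\lambda K+1$. The key cancellation is that $\tfrac{\gamma_k}{N_k}\leq \tfrac{\gamma_0}{a\sqrt{n}L_0(k+1)}$, so $\sum_{k=\ell}^{K-1}\tfrac{\gamma_k}{N_k}\leq \tfrac{\gamma_0(0.5-\ln\lambda)}{a\sqrt{n}L_0}$ by the same harmonic bound as before. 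Substituting into \eqref{ineq:residual_map_ineq_VRGZO_2} and simplifying yields \eqref{eqn:GBound_diminish_gamma}. For (b-ii-1), enforcing that the RHS of \eqref{eqn:GBound_diminish_gamma} does not exceed $\epsilon^2$ gives $\sqrt{K_\epsilon}=\mathcal{O}(\sqrt{n}(L_0\eta^{-1}+L_0^2)\epsilon^{-2})$, i.e.\ $K_\epsilon=\mathcal{O}(n(L_0^2\eta^{-2}+L_0^4)\epsilon^{-4})$. For (b-ii-2), $\sum_{k=0}^{K_\epsilon-1}N_k=\mathcal{O}(\sqrt{n}L_0\, K_\epsilon^{3/2})=\mathcal{O}(n^2(L_0^4\eta^{-3}+L_0^7)\epsilon^{-6})$.

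There isn't really a deep obstacle here; every step reduces to an elementary calculus bound on a discrete sum and solving a monotone inequality in $K$. The main bookkeeping care is (i) correctly tracking the two asymptotic regimes of $C_1=\mathbb{E}[f(\x_\ell)]-f^*+2L_0\eta$ (the constant part vs.\ the $L_0\eta$ part) when dividing through by $\gamma$ or $\gamma_0$, since these produce the two additive terms $L_0\eta^{-1}$ and $L_0^2$ inside the complexity, and (ii) verifying that the factor $\bigl(1-\tfrac{\sqrt{n}L_0\gamma}{\eta}\bigr)$ is bounded away from zero by the choice of $\gamma$ (respectively $\gamma_0$), so that it contributes only a harmless constant to the complexity bounds.
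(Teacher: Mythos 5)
Your proposal is correct and follows essentially the same route as the paper's proof: substitute the prescribed $\gamma_k$ and $N_k$ into the master bound \eqref{ineq:residual_map_ineq_VRGZO_2}, bound $\sum_{k=\ell}^{K-1}\tfrac{1}{k+1}$ by $0.5-\ln(\lambda)$ and the steplength sum by $(1-\lambda)K\gamma$ (resp.\ $2\gamma_0(1-\sqrt{\lambda})\sqrt{K}$ via an integral comparison), then apply Jensen's inequality and solve for $K_\epsilon$, with the same cancellation $\tfrac{\gamma_k}{N_k}\leq\tfrac{\gamma_0}{a\sqrt{n}L_0(k+1)}$ in the diminishing-step case and the same $\mathcal{O}(\sqrt{n}L_0 K_\epsilon^2)$ and $\mathcal{O}(\sqrt{n}L_0 K_\epsilon^{3/2})$ sample counts. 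No substantive differences from the paper's argument.
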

				\begin{proof}
					\noindent {\bf (a-i)} From~\eqref{ineq:residual_map_ineq_VRGZO_2}, we obtain
					\begin{align*}
						\mathbb{E}\left[ \|G_{\eta,{1/\gamma}} ({\x_{R_{\ell,K}}})\|^2\right]
						&\leq  
						\tfrac{\left( \mathbb{E}\left[f(\x_{\ell})\right] -f^* +2L_0\eta+ \left(16 \sqrt{2\pi} L_0^2{n} \right)\gamma\sum_{k=\ell}^{K-1} \tfrac{1}{N_k}\right)}{\left(\left( 1-\tfrac{\sqrt{n}L_0\gamma}{\eta}\right) \tfrac{\gamma}{4}(K-\ell)\right)} \\
						& \leq \tfrac{\left( \mathbb{E}\left[f(\x_{\ell})\right] -f^* +2L_0\eta+ \left({16 \sqrt{2\pi} L_0^2 n}\right){\gamma}\sum_{k=\ell}^{K-1} \tfrac{a^{-1}}{{L_0}{{\sqrt{n}}{(k+1)}}}\right)}{\left(\left( 1-\tfrac{\sqrt{n}L_0\gamma}{\eta}\right) \tfrac{\gamma}{4}(K-\ell)\right)},  
					\end{align*}
					where the expectation is with respect to both the iterate trajectory and the random integer $R_{\ell,K}$, while the last inequality  follows from the definition of $N_k$. Recall that $K> \tfrac{2}{1-\lambda}$ implies $\ell \leq K-1$ and  
					$ \textstyle\sum_{k={\ell}}^{K-1}\tfrac{1}{k+1}
					\leq 0.5 +\ln\left(\tfrac{N}{\lambda N+1}\right)\leq 0.5-\ln(\lambda).$
					{Further}, $K-\ell \geq K-\lambda K=(1-\lambda)K$ {implying that} \eqref{eqn:GBound_constant_gamma} holds.
					
					\noindent {\bf (a-ii)} To show (ii-1), using the relation in part (i) and by $\gamma:= \tfrac{\eta}{2\sqrt{n}L_0}$, we obtain
					\begin{align*}
						& \mathbb{E}\left[ \|G_{\eta,1/\gamma} (\x_{R_{\ell,K}})\|^2\right] \leq  
						\tfrac{\left(\mathbb{E}\left[f(\x_{\ell})\right] -f^* +2L_0\eta+ \left(8\eta \sqrt{2\pi} \right)  (0.5-\ln(\lambda)){a^{-1}}\right)}{\left(\eta(1-\lambda)K\right)/(16\sqrt{n}L_0)}.
					\end{align*}
					From Jensen's inequality,  $\mathbb{E}\left[ \|G_{\eta,1/\gamma} (\x_{R_{\ell,K}})\|\right] \leq \sqrt{\mathcal{O}\left({n^{1/2}}{(L_0\eta^{-1}+L_0^2)}/K\right)}$, implying that $K_{\epsilon}=\mathcal{O}\left({n^{1/2}}{(L_0\eta^{-1} +L_0^2)} \epsilon^{-2}\right)$. Next, we derive sample complexity as per (ii-2). 
					
					\begin{align*}
						\sum_{k=0}^{K_\epsilon} N_k & = \sum_{k=0}^{K_\epsilon}  \lceil  {a{\sqrt{n} L_0}(k+1)}\rceil 
						\leq {\sqrt{n} L_0}  {\mathcal{O}(K_{\epsilon}+{K_\epsilon^2})}
						\leq \mathcal{O}\left({n^{3/2}(L_0^3\eta^{-2}+L_0^5)} \epsilon^{-4}\right). 
					\end{align*}
					
					\noindent {\bf (b-i)}
					{Consider the inequality~\eqref{ineq:residual_map_ineq_VRGZO_2}.  Using $\tfrac{\gamma_k}{N_k}\leq \tfrac{\gamma_0}{{a{\sqrt{n}} L_0}(k+1)}$, we obtain} 
					\begin{align*}
						\mathbb{E}\left[ \|G_{\eta,{1/\gamma_0}} (\x_{R_{\ell,K}})\|^2\right]
						& \leq \tfrac{\left( \mathbb{E}\left[f(\x_{\ell})\right] -f^* +2L_0\eta+ \left({16  \sqrt{2\pi} L_0^2{n}} \right)\sum_{k=\ell}^{K-1} \tfrac{\gamma_0}{{a \sqrt{n} L_0}(k+1)}\right)}{\left(\left( 1-\tfrac{{\sqrt{n}} L_0\gamma_0}{\eta}\right) {\tfrac{\gamma_0}{2}}(1-\sqrt{\lambda})\sqrt{K}\right)},
					\end{align*}
					where we note that $\textstyle\sum_{k=\ell}^{K-1} \gamma_k \geq \textstyle\int_{\ell-1}^{K-1} \tfrac{\gamma_0}{\sqrt{x+1}} dx = 2\gamma_0(\sqrt{K}-\sqrt{\ell}) \geq 2\gamma_0(1-\sqrt{\lambda})\sqrt{K}.$ As earlier, we have that  
					$ \textstyle\sum_{k={\ell}}^{K-1}\tfrac{1}{k+1} \leq 0.5-\ln(\lambda),$ leading to the inequality in (b-i). 
					
					\noindent {\bf (b-ii)} From Jensen's
					inequality and by setting $\gamma_0 =
					\tfrac{\eta}{2 {\sqrt{n}} L_0}$, we obtain
					that \\$ \mathbb{E}\left[
					\|G_{\eta,1/\gamma_0}
					(\x_{R_{\ell,K}})\|\right] \leq
					\sqrt{\mathcal{O}\left({(L_0\eta^{-1}+L_0^2)\sqrt{n/K}}\right)}$, thus
					$K_{\epsilon}=\mathcal{O}\left(\tfrac{n(L_0^2\eta^{-2} +L_0^4)}{\epsilon^{4}}\right)$.\\ Next, (b-ii-2) holds by
					bounding the sample complexity as follows.
					\begin{align*}
						\sum_{k=0}^{K_\epsilon} N_k & = \sum_{k=0}^{K_\epsilon}  \lceil  {a\sqrt{n} L_0}\sqrt{k+1}\rceil 
						\leq {\sqrt{n} L_0}  {\mathcal{O}(K_{\epsilon}+{K_\epsilon^{3/2}})}
						\leq \mathcal{O}\left({n^{2}(L_0^4\eta^{-3}+L_0^7)} \epsilon^{-6}\right).
					\end{align*}
				\end{proof}
				\begin{remark}[{\bf Commentary on complexity and availability of $L_0$ in Theorem~\ref{thm:vr-rb-zo}}]\label{rem:VRG-ZO}
					\noindent First, we emphasize that the above rate and complexity guarantees correspond to a randomly chosen iterate, rather than the final iterate. We observe that \texttt{VRG-ZO} achieves better complexity bounds under a constant stepsize, i.e., $\gamma= {\tfrac{\eta}{2{\sqrt{n}}L_0}}$. However, from a practical perspective, the diminishing stepsize may seem appealing since the parameter $L_0$ is often unavailable, rendering a challenge in tuning the constant stepsize to establish convergence. In contrast, under a diminishing stepsize, even when $\gamma_0$ is chosen larger than the unknown threshold ${\tfrac{\eta}{{\sqrt{n}}L_0}}$, the stepsize $\gamma_k$ will eventually fall below the threshold for which the error bound \eqref{eqn:GBound_diminish_gamma} is satisfied.
				\end{remark}

\section{A Smoothed Quasi-Newton Framework}\label{sec:SQN}
In this section, we present a stochastic quasi-Newton framework for nonsmooth
nonconvex stochastic optimization. After introducing a smoothed
unconstrained formulation in~\cref{sec:sqn_formulation}, we present the SQN
algorithm in~\cref{sec:qn_description} and develop a damped L-BFGS scheme
in~\cref{sec:two_loop}. Guarantees of convergence are provided
in~\cref{sec:qn_conv}. 
\subsection{A smoothed unconstrained formulation}\label{sec:sqn_formulation}
Consider the unconstrained reformulation of \eqref{eqn:prob}, defined as 
\begin{align}\label{eqn:unconstrained}
    \min_{\x}  \quad h(\x), \mbox{ where } h(\x) \, \triangleq \, f(\x) + {\bf 1}_{\Xscr}(\x)\mbox{ where }  f(\x) \, \triangleq \,  \mathbb{E}\left[\, \f(\x,\bxi)\, \right],
\end{align}
and
$\mathbf{1}_{\Xscr}(\x)$ denotes the indicator function of the set $\Xscr$. By leveraging  randomized smoothing of $f$ and Moreau smoothing of $\mathbf{1}_{\Xscr}$, we define the smoothed counterpart of $h$ as  
\begin{align}\label{eqn:double_smoothed}
    \min_{\x} \quad  h_{\eta}{(\x)} \, &\triangleq \, f_{\eta}(\x)  + \mathbf{1}_{\Xscr,\eta}(\x), 
\end{align}
where  $f_\eta (\x)\, \triangleq \, \mathbb{E}_{\uu \in \mathbb{B}}\left[\, f(\x+\eta \uu)\, \right],
\mathbf{1}_{\Xscr, \eta}(\x)\, \triangleq \, {\displaystyle  \min_{y \in \mathbb{R}^n}} \left\{ \mathbf{1}_{\Xscr}(y) + \frac{1}{2 \eta} \| \x - \y \|^2 \right\}$,
 $\mathbb{B}\triangleq \{u \in \mathbb{R}^n\mid \|u\|\leq 1\}$, and $\eta>0$ denotes a smoothing parameter.
We observe that $\mathbf{1}_ {\Xscr, \eta} (\x)$ is the Moreau-smoothed indicator function of $\Xscr$. Crucially, $\mathbf{1}_{\Xscr, \eta} \in C^{1,1}$ with a $(1/\eta)$-Lipschitz continuous gradient defined as
$\nabla \mathbf{1}_{\Xscr, \eta}(\x) =  \tfrac{1}{\eta}\left(\x - \Pi_{\Xscr}(\x)\right).$ By leveraging Lemma~\ref{lemma:props_local_smoothing}, for a  $\eta$, $h_\eta \in C^{1,1}$ with $L_\eta$-Lipschitz continuous gradient where $L_{\eta} \triangleq \tfrac{L_{0}{\sqrt{n}}+1}{\eta}$. 
\begin{algorithm}[htb]
     \caption{\texttt{VRSQN-ZO}: A VR zeroth-order smoothed quasi-Newton method}\label{algorithm:quasi-newton}
    \begin{algorithmic}[1]\fyy{
         \STATE\textbf{input:} $\x_0 \in \Xscr$,  $\{\gamma_k\} >0$, $\mu > 0$,  $\{N_k\}$,  $K$, and memory parameter $\tilde{p} \geq1$. {Define $p := \min \{k, \tilde{p}\}$.}
    \FOR {$k=0,1,\ldots,{K}-1$}
     \FOR {$j = 1, \ldots, N_k$}  
         \STATE {Generate a random sample $v_{j,k} \in {\eta} \mathbb{S}$ and  
        compute the stochastic ZO gradient}  
\begin{align*}
&g_{\eta}(\x_k,v_{j,k},\xi_{j,k}) :=\tfrac{n\left(\f(\x_k+ v_{j,k},\xi_{j,k}) - \f(\x_k-v_{j,k},\xi_{j,k})\right)v_{j,k}}{2\left\|v_{j,k}\right\|{\eta}} + \tfrac{\x_k - \Pi_{\Xscr}(\x_k)}{\eta}.
\end{align*}
     \ENDFOR
      
	\STATE  { Evaluate ${\bar{g}_{\eta,{N_k}}(\x_k)} :=  {\tfrac{\sum_{j=1}^{N_k}  g_{{\eta}}(\x_k,v_{j,k},\xi_{j,k})}{N_k}}$.}  
   	
%
    
    \STATE Discard the vectors   $ s_{k-p},y_{k-p},\bar{y}_{k-p}$ from storage
    
%
\STATE  Generate $r_k$ by passing ${\bar{g}_{\eta,{N_k}}(\x_k)}$, $s_{k-p}, \dots s_{k-1}$, $ {y}_{k-p}, \dots,  {y}_{k-1}$, and $\bar{y}_{k-p}, \dots, \bar{y}_{k-1}$ to Algorithm~\ref{algorithm:hessian-construction} (\texttt{L-BFGS-SMOOTHED}).
             \STATE Update $\x_k$ as follows. $\x_{k+1}:=\x_k-\g_k r_k$.  \hspace{.3in} { $\vartriangleright$ {\bf L-BFGS update }}
                  
    \FOR {$j = 1, \ldots, N_{k}$}  
     \STATE Use the generated samples $v_{j,k} \in {\eta} \mathbb{S}$  to compute
\begin{align*}
&g_{{\eta}}(\x_{k+1},v_{j,k},{\xi}_{j,k}) :=\tfrac{n\left({\f}(\x_{k+1}+ v_{j,k},{\xi}_{j,k}) - {\f}(\x_{k+1}-v_{j,k},{\xi}_{j,k})\right)v_{j,k}}{{2\left\|v_{j,k}\right\|{\eta}}} + \tfrac{\x_{k+1} - \Pi_{\Xscr}(\x_{k+1})}{{\eta}} . 
\end{align*}

     \ENDFOR
    	\STATE  {Evaluate  ${\widehat{g}}_{{\eta}, {N_k}}(\x_{k+1})  := \tfrac{\sum_{j=1}^{N_k}  g_{{\eta}}(\x_{k+1},{v_{j,{k}}},{\xi}_{j,{k}})}{N_k}$.}  
        	
    \STATE {Evaluate $s_{k}:=\x_{k+1} - \x_{k}$ and $y_{k}:= {\widehat{g}}_{\eta,N_{k}}(\x_{k+1})-\bar{g}_{\eta,N_{k}}(\x_{k}) $.}

     \STATE {Evaluate $\bar{y}_{k}   := \Phi_{k}y_{k} + (1 - \Phi_{k})H^{-1}_{k+1,0}s_{k}$ where $\Phi_k$ is defined by \eqref{theta-choice}.}
 
    \ENDFOR

        \STATE Return $\x_K$} 
\end{algorithmic}
\end{algorithm}
Next, we present the assumptions and comment on prior work.
\begin{assumption}[{\bf Problem properties}] 
	\label{ass-2}\em
	\noindent Consider \fyy{problem}~\eqref{eqn:double_smoothed}.
	
    \noindent (i) Function~$\tilde{f}(\bullet,{\bxi})$ is $L_0$-Lipschitz continuous on $\mathbb{R}^n$ {a.s. for all $\bxi$}.  
 (ii)  $\mathcal{X}$ is nonempty, closed, and convex.
 (iii) For all $\x \in \mathbb{R}^n$,  $\mathbb{E}[\, {\f(\x,\bxi)} \, \mid \, \x\,] \, = \, f(\x)$ a.s. . 

	
\end{assumption}
\begin{remark} (i) Most prior work on SQN
schemes~\cite{goldfarb,Wang2019StochasticPQ} requires twice differentiability of
$\tilde{f}(\bullet,{\bxi})$, such that $\|\nabla^2_{\x \x}\tilde{f}(\x,{\bxi}) \|
\leq \kappa$ for all $\x,{\bxi}$ and some $\kappa>0$ (e.g., see AS.5
in~\cite{goldfarb}). We considerably weaken this assumption by allowing the
objective function to be Lipschitz continuous and possibly nondifferentiable.
(ii) In the prior section, $\tilde{f}(\bullet,{\bxi})$ was required to be
$L_0$-Lipschitz on $\Xscr + \eta_0 \mathbb{B}$, whereas we now require that it
is $L_0$-Lipschitz on $\mathbb{R}^n.$ We believe this assumption can be weakened by
considering problem of the form \eqref{eqn:unconstrained} where the objective
function is augmented by a smooth nonconvex function, with possibly unbounded
gradients over $\mathbb{R}^n$. The study of such extensions is left as a future
direction to our work. 
\begin{table}[h]
	\scriptsize
	\centering
		\caption{Comparison of assumptions and eigenvalue bounds across relevant works.}
	\label{comp_QN}
	\begin{tabular}{@{}l >{\raggedright\arraybackslash}p{1.5cm} >{\raggedright\arraybackslash}p{1.5cm} c >{\raggedright\arraybackslash}p{2.5cm}@{}}
		\toprule
		\multicolumn{5}{c}{\textbf{Assumptions in Relevant Works}} \\
		\midrule
		\textbf{Authors} & $f \in C^1$ & $f \in C^2$ & $\nabla f$ is Lip. cont. &  $\exists \kappa > 0$ such that $\|\nabla^2_{xx} f(x, \xi)\| \leq \kappa$ \\ 
		\hline
		Wang, Ma, Goldfarb, Liu~\cite{goldfarb} & \cmark & \cmark & \cmark & \cmark \\
		\hline
		Bollapragada and Wild~\cite{bollapragada2023adaptive} & \cmark & \xmark & \cmark & \xmark \\
		\hline
		Berahas, Nocedal, Tak\'{a}\v{c}~\cite{berahas2016} & \cmark & \cmark & \cmark & \cmark  \\
		\hline
		\rowcolor{pink}
		$f(\bullet)$ (this work) & \xmark & \xmark & \xmark & \xmark \\
		\hline
		\rowcolor{pink}
		$f_{\eta}(\bullet)$ (this work) & \cmark & \xmark & \cmark & \xmark \\
		\bottomrule
	\end{tabular}
	
	\vspace{1.5em}

	\begin{tabular}{@{}l cc@{}}
		\toprule
		\multicolumn{3}{c}{\textbf{Eigenvalue Bounds}} \\
		\midrule
		\textbf{Authors} & $\underline{\lambda}$ & $\overline{\lambda}$ \\ 
		\midrule
		Wang, Ma, Goldfarb, Liu~\cite{goldfarb} & $\left( \frac{4p \kappa^2}{\delta} + (4p + 1)(\kappa + \delta) \right)^{-1}$ & $\left( \frac{\left(\frac{4\kappa + 5\delta}{\delta}\right)^{2p} - 1}{\left(\frac{4\kappa + 5\delta}{\delta}\right)^2 - 1} \right) \frac{4}{\delta} + \frac{\left(\frac{4\kappa + 5\delta}{\delta}\right)^{2p}}{\delta}$ \\
		\hline
		Bollapragada and Wild~\cite{bollapragada2023adaptive} & $C_1$ & $C_2$ \\ 
		\hline
		Berahas, Nocedal, Tak\'{a}\v{c}~\cite{berahas2016} & $C_1$ & $C_2$ \\
		\hline \rowcolor{pink}
		This work & $\tfrac{\delta}{32(2+\delta)(p+1)L_\eta^2}$ & $\left(4p + 1 \right)\left( 1 + \tfrac{16 L_\eta\sqrt{2+\delta}}{\delta} \right)^{2p}$ \\
		\bottomrule
	\end{tabular}

\end{table}
 \end{remark}

\subsection{Algorithm Description}\label{sec:qn_description} 
We now present a zeroth-order SQN algorithm, called \texttt{VRSQN-ZO} and
presented by Algorithm~\ref{algorithm:quasi-newton}, for addressing
problem~\eqref{eqn:unconstrained}. Some of the key characteristics of this
method are as follows. (i) \texttt{VRSQN-ZO} is primarily a derivative-free
quasi-Newton method, reliant on the {sampled} objective function
evaluations. (ii) It leverages randomized smoothing of the objective function
and Moreau smoothing of the constraint set. (iii) It employs a damped
limited-memory BFGS scheme called \texttt{L-BFGS-SMOOTHED} (see
Algorithm~\ref{algorithm:hessian-construction}) for efficiently constructing
inverse Hessian approximations of the expectation-valued nonconvex objective
function. (iv) Lastly, our method is equipped with variance reduction which
improves the iteration complexity in stochastic regimes.

Next, we describe the outline of \texttt{VRSQN-ZO}. The sequence
$\{\x_k\}$ denotes the iterates generated by the method. Similar to
\texttt{VRG-ZO}, at iteration $k$ in \texttt{VRSQN-ZO}, a mini-batch of random
samples $v_{j,k} \in \eta\mathbb{S}$ of size $N_k$ is generated and used for
evaluating of $\bar{g}_{\eta,N_k}(\x_k) \in \mathbb{R}^n$, a
derivative-free estimate (with respect to $f$) of $\nabla
h_{\eta}(\x_{k})$. A key difference with \texttt{VRSQN-ZO} lies in the
use of an inverse Hessian approximation in updating $\x_k$. As shown 
later, the main update rule of \texttt{VRSQN-ZO} can be compactly represented
as $\x_{k+1} = \x_{k} - \gamma_k H_k\bar{g}_{\eta,N_k}(\x_k) $ where
$H_k$ denotes a zeroth-order inverse Hessian approximation of the smoothed
function. This framework is inspired by {but distinct from}~\cite{goldfarb}; it is
(i) {\em derivative-free} and (ii) {\em can accommodate nonsmoothness in the nonconvex term}. In
computing the product $H_k \bar{g}_{\eta,N_k}(\x_k)$, we employ a
stochastic damped L-BFGS scheme. This scheme employs two auxiliary stochastic
zeroth-order gradients evaluated at successive iterates. Let us define
\begin{align} \label{gbar}
& {\widehat{g}}_{{\eta}, {N_k}}(\x_{k+1})   \triangleq \tfrac{\sum_{j=1}^{N_k}  g_{{\eta}}(\x_{k+1},{\bv_{j,{k}}},\xi_{j,{k}})}{N_k}, \quad 
  {\bar{g}_{\eta,{N_k}}(\x_k)}   \triangleq   {\tfrac{\sum_{j=1}^{N_k}  g_{{\eta}}(\x_k,\bv_{j,k},\xi_{j,k})}{N_k}},\\
   & \mbox{where }
\label{g-def}
    g_{{{\eta}}}(\bullet,\bv_{j,k},\xi_{j,k}) \triangleq \tfrac{n\left({\f}(\bullet+ \bv_{j,k},\xi_{j,k}) - {\f}(\bullet - {\bv_{j,k}},\xi_{j,k})\right)\bv_{j,k}}{{2}\left\|\bv_{j,k}\right\|{\eta}} + \tfrac{1}{{\eta}}(\bullet - \Pi_{\Xscr}(\bullet)).
\end{align}
{As shown in Algorithm~\ref{algorithm:quasi-newton}, ${\widehat{g}}_{{\eta}, {N_k}}(\x_{k+1})$ is evaluated after $\x_{k+1}$ is computed during the iteration $k$. However, ${\bar{g}}_{{\eta}, {N_k}}(\x_{k})$ is used in the computation of $\x_{k+1}$ and so, it is evaluated prior to the main L-BFGS step. Note that although ${\widehat{g}}_{{\eta}, {N_k}}(\x_{k+1})$ is computed at iteration $k$, it is used at time $k+1$. Importantly, this circumvents the need for the storage of the generated mini-batch for the next iteration and helps with memory efficiency. For $k \geq 0$,  let} the stochastic gradient difference \fyy{$y_{k}$ and iterate difference $s_{k}$} be defined as
\begin{align}\label{eqn:y_s_definition}
  y_{k}\triangleq  {\widehat{g}}_{\eta,N_{k}}(\x_{k+1})-\bar{g}_{\eta,N_{k}}(\x_{k})   
    \mbox{ {and} }  
    s_{k}\triangleq \x_{k+1} - \x_{k}, \mbox{respectively.}
\end{align}
We now define $\bar{y}_{k}$ with a view towards satisfying the curvature condition {$s_{k}^\top \bar{y}_{k} > 0$.   
\begin{align}
  \label{bary}  \bar{y}_{k}  &= \Phi_{k}y_{k} + (1 - \Phi_{k})H^{-1}_{k+1,0}s_{k}\\    
  \label{theta-choice}  \mbox{ where }\Phi_{k} & = \begin{cases}  \tfrac{0.75 s^{\fyy{\top}}_{k} H^{-1}_{k+1,0}s_{k}}{s^{\fyy{\top}}_{k} H^{-1}_{k,0}s_{k}-s^{\fyy{\top}}_{k}y_{k}}, \quad & \mbox{if   } s^{\fyy{\top}}_{k}y_{k} < 0.25 s^{\fyy{\top}}_{k} H^{-1}_{k+1,0}s_{k}\\
 1 \, , & \mbox{otherwise} \, 
\end{cases}    
\end{align}
and given a user-defined constant $\delta > 0 $, for $k\geq 1$, $H_{k,0}$ is defined as 
\begin{align}\label{bound-on-first-hessian}
    H_{k,0} = \nu^{-1}_{k} \mathbf{I},  \mbox{ where }  \nu_{k} = \max\left\{ \tfrac{y^{\fyy{\top}}_{k-1}y_{k-1}}{s^{\top}_{k-1}y_{k-1}+ \delta s_{k-1}^\top s_{k-1}}, \delta \right\}.
\end{align}
Observe that the additional term $\delta s_{k-1}^{\top} s_{k-1}$ allows us to
develop suitable boundedness claims.  In addition, this implies that
$\|H_{k,0}\| = \tfrac{1}{\nu_k}.$  In contrast with~\cite{goldfarb}, we
provide bounds on the smallest and largest eigenvalue of $H_k$ in terms of
$\eta$.  Next, we study the properties of the first and second moments of
random errors in the zeroth-order approximation of the gradient map. {We observe that a partial overlap-style scheme such as the one presented in \cite{berahas2016} could be considered, but proceed with a full overlap scheme for reasons of simplicity.}
Throughout, we define the history of the algorithm as earlier in
\eqref{eqn:history} by $\mathcal{F}_k$ at iteration $k$. 
\begin{definition}\label{def:stoch_errors_sqn} 
    For all $k\geq 0,j=1,\ldots,N_k$, let $\tilde{\mathbf{e}}_k\triangleq \tfrac{\sum_{j=1}^{N_k}\tilde{\mathbf{e}}_{j,k}}{N_k}$,
\begin{align}
    {\tilde{\mathbf{e}}_{j,k}} & \triangleq \left({g_{\eta}(\x_k,\vv_{j,k},\bxi_{j,k})}+\tfrac{1}{\eta}\left(\x_k - \Pi_{\Xscr}(\x_k)\right)\right)-\nabla h_\eta(\x_k),\label{def:stoch_errors1_sqn}
\end{align}
    where realizations of $g_{\eta}(\x_k,\vv,\bxi)$ and $\tilde{\mathbf{e}}_k$ are  given by \eqref{g-def} and  $\tilde{e}_k$. $\hfill \Box$
\end{definition}
The following result can be shown in a similar vein to the proof of Lemma~\ref{lem:stoch_error_var}.
\begin{lemma}[{\bf Bias and moment properties of {$\tilde{\mathbf{e}}$}}]\label{lem:stoch_error_var_sqn}\em
Let Assumptions \ref{ass-2} and \ref{assum:random_vars} hold. Then (i) and (ii) hold almost surely for $k\geq 0$ and $N_k \geq 1$.

    \noindent (i) $\mathbb{E}[\tilde{\mathbf{e}}_{j,k}\mid \mathcal{F}_k] =0$ for all $j=1,\ldots,N_k$. (ii) $\mathbb{E}[\|\tilde{\mathbf{e}}_k\|^2\mid \mathcal{F}_k] \leq  \tfrac{{16 \sqrt{2\pi }n L_{0}^{2}}}{N_k}$. $\hfill \Box$ 
\end{lemma}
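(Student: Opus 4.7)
The plan is to mirror the proof of Lemma~\ref{lem:stoch_error_var}, exploiting a single structural observation: the Moreau-smoothed indicator is continuously differentiable with $\nabla \mathbf{1}_{\Xscr,\eta}(\x_k) = \tfrac{1}{\eta}(\x_k - \Pi_{\Xscr}(\x_k))$, so that the ``constraint part'' of the estimator is \emph{deterministic} given $\mathcal{F}_k$ and drops out of $\tilde{\mathbf{e}}_{j,k}$. Concretely, substituting $\nabla h_{\eta}(\x_k) = \nabla f_{\eta}(\x_k) + \tfrac{1}{\eta}(\x_k - \Pi_{\Xscr}(\x_k))$ into~\eqref{def:stoch_errors1_sqn} and cancelling the Moreau gradient against the corresponding term in $g_{\eta}(\x_k,\vv_{j,k},\bxi_{j,k})$ reduces the expression to the spherical-smoothing error of Section~\ref{sec:VRG_ZO}, i.e.,
\begin{align*}
\tilde{\mathbf{e}}_{j,k} \;=\; \tfrac{n\left(\tilde{f}(\x_k+\vv_{j,k},\bxi_{j,k}) - \tilde{f}(\x_k-\vv_{j,k},\bxi_{j,k})\right)\vv_{j,k}}{2\eta \|\vv_{j,k}\|} - \nabla f_{\eta}(\x_k).
\end{align*}
This puts us in exactly the setting treated in Lemma~\ref{lem:stoch_error_var}.

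For part~(i), I would condition further on $\mathcal{F}_k \cup \{\vv_{j,k}\}$ and apply Assumption~\ref{ass-2}(iii) to replace $\tilde{f}(\cdot,\bxi_{j,k})$ by $f(\cdot)$ in expectation; then, averaging over $\vv_{j,k}$ uniform on $\eta\mathbb{S}$ and invoking Lemma~\ref{lemma:props_local_smoothing}(i) reproduces exactly $\nabla f_{\eta}(\x_k)$, so that $\mathbb{E}[\tilde{\mathbf{e}}_{j,k}\mid \mathcal{F}_k] = 0$ almost surely. This is formally identical to the computation carried out in the proof of Lemma~\ref{lem:stoch_error_var}(i).

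For part~(ii), I would first bound each summand using Lemma~\ref{lemma:props_local_smoothing}(v), which (combined with Assumption~\ref{ass-2}(i), under which $\tilde{L}_0(\bxi) \equiv L_0$) yields $\mathbb{E}[\|\tilde{\mathbf{e}}_{j,k}\|^2 \mid \mathcal{F}_k] \le 16\sqrt{2\pi}\, L_0^2 n$ almost surely. Under Assumption~\ref{assum:random_vars}, the samples $\{(\vv_{j,k},\bxi_{j,k})\}_{j=1}^{N_k}$ are conditionally independent given $\mathcal{F}_k$, and by part~(i) each $\tilde{\mathbf{e}}_{j,k}$ is conditionally mean-zero, so the cross-terms in the expansion of $\|\tilde{\mathbf{e}}_k\|^2 = \|N_k^{-1}\sum_{j=1}^{N_k}\tilde{\mathbf{e}}_{j,k}\|^2$ vanish. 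This leaves
\begin{align*}
\mathbb{E}\!\left[\|\tilde{\mathbf{e}}_k\|^2 \mid \mathcal{F}_k\right] \;=\; \tfrac{1}{N_k^2}\sum_{j=1}^{N_k} \mathbb{E}\!\left[\|\tilde{\mathbf{e}}_{j,k}\|^2 \mid \mathcal{F}_k\right] \;\le\; \tfrac{16\sqrt{2\pi}\, n L_0^2}{N_k},
\end{align*}
which is the desired bound.

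The only subtlety — and it is a minor one — is the bookkeeping in the first paragraph: verifying that the $g_{\eta}$ appearing inside \eqref{def:stoch_errors1_sqn} is to be read against the full Section~\ref{sec:SQN} definition \eqref{g-def} so that the Moreau-gradient contributions cancel exactly, leaving a purely spherical error to which Lemma~\ref{lemma:props_local_smoothing}(i) and~(v) apply verbatim. Once this reduction is made, no new estimates are needed beyond those already proved in~\cref{sec:preliminaries}, and the argument runs parallel to the proof of Lemma~\ref{lem:stoch_error_var}.
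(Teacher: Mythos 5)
Your proof is correct and is essentially the paper's argument: the paper itself only states that this lemma ``can be shown in a similar vein to the proof of Lemma~\ref{lem:stoch_error_var},'' and your writeup is exactly that reduction — the Moreau gradient $\tfrac{1}{\eta}(\x_k-\Pi_{\Xscr}(\x_k))$ is $\mathcal{F}_k$-measurable and cancels against the corresponding term in $\nabla h_\eta(\x_k)$, leaving the purely spherical error handled by Lemma~\ref{lemma:props_local_smoothing}(i) and (v) together with conditional independence of the mini-batch. Your observation about the bookkeeping in \eqref{def:stoch_errors1_sqn} versus \eqref{g-def} is apt (as written the Moreau term appears to be counted twice), and your resolution — that the net error must reduce to the spherical one — is the intended reading.
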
 

\subsection{Construction of the Inverse Hessian Approximation}\label{sec:two_loop}
{Let us define the sequence of limited-memory BFGS matrices $\{B_k\}$ for $k \geq p$ as follows. 
\begin{align}\label{LBFGS-update}
    B_{k,i} & = B_{k,i-1} + \tfrac{\bar{y}_{j}\bar{y}_{j}^\top}{s_{j}^\top\bar{y}_{j}} - \tfrac{B_{k,i-1}s_{j}s_{j}^\top B_{k,i-1}}{s_{j}^\top B_{k,i-1}s_{j}}, \mbox{where }j = k-p+i-1, \  i = 1,\hdots,p \\
    \label{Bk_update}
     {B_{k}} &\triangleq  B_{k,p}.  
\end{align}
Also, for $k\geq p$, we define the inverse Hessian approximation matrix $H_k$ as follows. 
\begin{align}\label{Hk_update}
H_{k,i} & =V_j^\top H_{k,i-1}V_j + \rho_j s_{j}s^{\fyy{\top}}_{j} ,  \mbox{ where }     j = k-p+i-1, \  i = 1,\hdots,p,  \\
H_{k} &\triangleq  H_{k,p},   \notag
\end{align}
where $
 \rho_j\triangleq \frac{1}{\bar{y}_{j}^{\fyy{\top}}s_{j}}, \hbox{ and }V_j\triangleq \mathbf{I}-\rho_{j}\bar{y}_{j}s_{j}^{\fyy{\top}}$, for all  $j=k-p,\ldots, k-1.$ Using the Sherman-Morrison-Woodbury formula, we have $H_{k} \triangleq B_{k}^{-1}$ for all $k \geq p$.
Our proposed  ZO SQN scheme uses a damped limited-memory BFGS update
rule. The outline of this scheme is provided in
Algorithm~\ref{algorithm:hessian-construction}. Here, given a memory parameter
$p\geq 1$, the vectors $\{s_j,y_j,\bar{y}_j\}$ for $i=k-p,\ldots,k-1$, and
$\bar{g}_{\fyy{\eta},{N_k}}(\x_k)$, the two-loop scheme
\texttt{L-BFGS-SMOOTHED} generates the product
$H_k{\bar{g}_{\eta,{N_k}}(\x_k)}$, without the need to compute or store matrix
$H_k$ when $k\geq 1$.
 \begin{algorithm}[hbt]
     \caption{\texttt{L-BFGS-SMOOTHED}: Zeroth-Order Damped Limited-Memory BFGS}\label{algorithm:hessian-construction}
    \begin{algorithmic}[1]{
         \STATE\textbf{input:}  $\bar{g}_{\eta,N_k}(\x_k)$, iterate differences $s_{k-p}, \dots, s_{k-1}$ and gradient estimate differences $y_{k-p}, \dots, y_{k-1}$ and $\bar{y}_{k-p}, \dots, \bar{y}_{k-1}$, and a user-defined constant $\delta>0$.
\STATE Initialize $H_{k,0}:= \nu_{k}^{-1}{\bf I}$ where $\nu_{k} =  \max\left\{ \tfrac{y^{\top}_{k-1}y_{k-1}}{s^{\top}_{k-1}y_{k-1}+  {\delta s_{k-1}^{\top} s_{k-1}}}, \delta \right\}.$ \hspace{.1in} \tikzmark{right} 
 
     \STATE Initialize $q_0:=\bar{g}_{\eta,N_k}(\x_k)$ 
     \FOR {$j=k-1:k-p$}
      \STATE Compute $\rho_j := (s_{j}^\top \bar{y}_{j})^{-1}$
     \STATE Compute scalar $\alpha_{k-j}:=\rho_j s_{j}^\top q_{k-j-1} $ 
     \STATE Update vector $q_{k-j}:=q_{k-j-1}-\alpha_{k-j}\bar{y}_{j}$ 
     \ENDFOR
     \STATE Initialize vector $r_0:=H_{k,0}q$ 
     \FOR {$j=k-p:k-1$}
     \STATE  Update vector $r_{j-k+p+1}:=r_{j-k+p}+\left(\alpha_{k-j}-\rho_j \bar{y}_{j}^\top r_{j-k+p} \right)s_{j}$ 
          \ENDFOR
        \STATE Return  $r_p$  
        }
\end{algorithmic}
\end{algorithm}
Next, we show that the generated iterate by the proposed ZO algorithm is indeed a well-defined stochastic damped L-BFGS scheme.
\begin{proposition}\label{lemma:Bk_posdef}\em
Let $\{\x_k\}$ be generated by Algorithm~\ref{algorithm:quasi-newton}, i.e.,
$$\x_{k+1} := \x_k -\gamma_k d_k,  \qquad \hbox{where } d_k \triangleq 
\begin{cases}
               \bar{g}_{\eta,N_{k}}(\x_{k}) , \quad \hbox{if } k <p,\\
               r_k,\ \ \  \quad\qquad \hbox{if } k \geq p,\\
\end{cases}$$
where $r_k$ is returned by Algorithm~\ref{algorithm:hessian-construction} at iteration $k$. Then, the following results hold.  

\noindent (a) $r_k=B_k^{-1}{\bar{g}_{\eta,{N_k}}(\x_k)}$ for $k\geq p$ where the sequence $\{B_k\}$ is defined by \eqref{LBFGS-update}-\eqref{Bk_update}.

\noindent (b) The pair of $s^{\fyy{\top}}_{k}$ and $\bar{y}_{k}$ satisfies the curvature conditon. More precisely, we have \begin{align*}
s^{\top}_{k} \bar{y}_{k} \geq 0.25 s^{\top}_{k} H^{-1}_{k,0}s_{k}, \qquad \hbox{for all } k\geq 0. 
\end{align*}  

\noindent (c) For any $k \geq p$, if $B_{k,0} \succ 0$, then $B_{k,i}$ and $H_{k,i}$ are positive definite for all $i   =  1 ,\ldots, p$, respectively.

    \noindent (d)  For all $k\geq p$, $\mathbb{E}\left[\, \mathbf{H}_k\,\mid\, \mathcal{F}_k\, \right] =\mathbf{H}_k$ and  $\mathbb{E}\left[\, \mathbf{r}_k \, \mid \, \mathcal{F}_k\,\right] =\mathbf{H}_k\nabla h_\eta(\x_k)$ almost surely, where $h_\eta$ is given by \eqref{eqn:unconstrained}. 

\end{proposition}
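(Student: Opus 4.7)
I would establish the four parts sequentially; they are largely independent, with part (b) being the main technical step on which (c) depends, and (d) following once (a) is in hand.

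For (a), \texttt{L-BFGS-SMOOTHED} is structurally the classical Nocedal two-loop recursion applied to the inverse-Hessian formula $H_{k,i} = V_{j}^{\top} H_{k,i-1} V_{j} + \rho_{j} s_{j} s_{j}^{\top}$, where $j = k-p+i-1$. I would argue by induction on $p$ that the downward (inner) loop accumulates the $V_{j}$ factors on the right of the input vector $\bar g_{\eta,N_k}(\x_k)$ while storing the scalars $\alpha_{k-j}$ used to recover the rank-one terms $\rho_{j} s_{j} s_{j}^{\top} q$ on the way back up through the outward loop. Since $H_{k,p} = H_{k} = B_{k}^{-1}$ by the Sherman--Morrison--Woodbury formula, the returned vector $r_{p}$ equals $B_{k}^{-1} \bar g_{\eta,N_k}(\x_k)$.

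For (b), I would split on the two branches of \eqref{theta-choice}. In the branch $\Phi_{k} = 1$, one has $\bar y_{k} = y_{k}$, and the defining condition of this branch is precisely $s_{k}^{\top} y_{k} \geq 0.25\, s_{k}^{\top} H_{k+1,0}^{-1} s_{k}$, giving the claimed lower bound directly. In the damped branch, I would substitute the explicit expression for $\Phi_{k}$ into \eqref{bary} and compute
\begin{align*}
s_{k}^{\top} \bar y_{k} = \Phi_{k}\bigl( s_{k}^{\top} y_{k} - s_{k}^{\top} H_{k+1,0}^{-1} s_{k}\bigr) + s_{k}^{\top} H_{k+1,0}^{-1} s_{k},
\end{align*}
from which direct algebraic simplification yields the exact equality $s_{k}^{\top} \bar y_{k} = 0.25\, s_{k}^{\top} H_{k+1,0}^{-1} s_{k}$. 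The delicate point is confirming $\Phi_{k} \in (0,1)$ so that $\bar y_{k}$ is a well-defined convex combination, which uses the branch hypothesis together with the fact that $H_{k+1,0}^{-1} \succeq \delta\, \mathbf I$ guaranteed by \eqref{bound-on-first-hessian}.

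For (c), I would induct on $i \in \{0,\ldots,p\}$ for fixed $k \geq p$. The base case $B_{k,0} = \nu_{k}\, \mathbf I \succ 0$ follows from $\nu_{k} \geq \delta > 0$. The inductive step invokes the standard fact that the BFGS update \eqref{LBFGS-update} preserves positive definiteness whenever the curvature condition $s_{j}^{\top} \bar y_{j} > 0$ holds, which is precisely what part (b) establishes. Positive definiteness of $H_{k,i} = B_{k,i}^{-1}$ is then immediate.

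For (d), the decisive observation is a measurability check: every triple $\{s_{j}, y_{j}, \bar y_{j}\}_{j=k-p}^{k-1}$ entering the construction of $\mathbf H_{k}$ is built from iterates through $\x_{k}$ and samples $\{\bxi_{j,\ell}, \vv_{j,\ell}\}_{\ell \leq k-1}$, all of which lie in $\mathcal F_{k}$ by \eqref{eqn:history}. Consequently $\mathbf H_{k}$ is $\mathcal F_{k}$-measurable, which gives $\mathbb E[\mathbf H_{k} \mid \mathcal F_{k}] = \mathbf H_{k}$ almost surely. For the second identity, I would use part (a) to write $\mathbf r_{k} = \mathbf H_{k}\, \bar{\mathbf g}_{\eta,N_{k}}(\x_{k})$, pull $\mathbf H_{k}$ outside the conditional expectation, and apply Lemma~\ref{lem:stoch_error_var_sqn}(i), which yields $\mathbb E[\bar{\mathbf g}_{\eta,N_{k}}(\x_{k}) \mid \mathcal F_{k}] = \nabla h_{\eta}(\x_{k})$. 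The principal obstacle across the four parts is part (b): the damping coefficient $\Phi_{k}$ must be verified to produce a uniform curvature lower bound, because the eigenvalue estimates summarized in Table~\ref{comp_QN} and all downstream convergence analysis rest on this constant being strictly positive and independent of the iteration.
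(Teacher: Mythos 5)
Your proposal is correct and follows essentially the same route as the paper: matching the two-loop recursion against the unrolled product form of $H_{k,p}$ and invoking Sherman--Morrison--Woodbury for (a), the Powell-damping algebra for (b), the standard curvature-condition induction for (c), and the $\mathcal{F}_k$-measurability of $\mathbf{H}_k$ plus Lemma~\ref{lem:stoch_error_var_sqn} for (d). If anything, you supply more detail than the paper on part (b), which it delegates to a citation of Lemma~3.1 in~\cite{goldfarb}; your explicit computation (yielding exact equality with $0.25\, s_k^{\top} H_{k+1,0}^{-1} s_k$ in the damped branch and verifying $\Phi_k \in (0,1)$) is the right argument.
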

\begin{proof} \noindent  \fyy{(a) Consider \eqref{Hk_update}. By unrolling this update rule recursively, we have
\begin{align}\label{H_k_nonrec}
H_{k,p} &=\left(\textstyle\prod_{i=1}^{p}V_{j}\right)^{\top}H_{k,0}\left(\textstyle\prod_{i=1}^{p}V_{j}\right)  + \rho_{k-p}\left(\textstyle\prod_{i=2}^{p}V_{j}\right)^{\top}s_{k-p}s^{\fyy{\top}}_{k-p}\left(\textstyle\prod_{i=2}^{p}V_{j}\right) \notag\\
& + \rho_{k-p+1}\left(\textstyle\prod_{i=3}^{p}V_{j}\right)^{\top}s_{k-p+1}s^{\fyy{\top}}_{k-p+1}\left(\textstyle\prod_{i=3}^{p}V_{j}\right)  + \ldots\notag\\
& +\rho_{k-2}V^{\top}_{k-1}s_{k-2}s_{k-2}^{\top}V_{k-1} +\rho_{k-1}s_{k-1}s_{k-1}^{\top},   
\end{align}
where $ j = k-p+i-1$. Consider Algorithm~\ref{algorithm:hessian-construction}. Note that $q_0\triangleq \bar{g}_{\eta,N_{k}}(\x_{k})$ and $r_0\triangleq H_{k,0}q_{p}$. Next, we derive a formula for $q$. We have
\begin{align*}
q_{k-j}&=q_{k-j-1} -\alpha_{k-j}\bar{y}_j = q_{k-j-1} - \rho_j\left(s_j^{\top}q_{k-j-1}\right)\bar{y}_j = q_{k-j-1} - \rho_j\left(y_js_j^{\top}\right)q_{k-j-1} \\
&= \left( \mathbf{I}-\rho_{j}y_js_j^{\fyy{\top}}\right)q_{k-j-1}=V_jq_{k-j-1}, \quad \hbox{for all } j=   k-1, \ldots, k-p.
\end{align*}
From the preceding relation, we may write
\begin{align}\label{q_k_nonrec}
q_{\ell}=\left(\textstyle\prod_{i=p-\ell+1}^{p}V_{k-p+i-1}\right)q_0, \quad \hbox{for all } \ell= 1,2, \ldots, p. 
\end{align}
Further, from the algorithm, we have $\alpha_1=\rho_{k-1}s_{k-1}^{\top}q_0$ and 
\begin{align}\label{alpha_k_nonrec}
\alpha_{\ell}=\rho_{k-\ell}s_{k-\ell}^{\top}\left(\textstyle\prod_{i=p-\ell+2}^{p}V_{k-p+i-1}\right)q_0, \quad \hbox{for all } \ell= 2,3, \ldots, p. 
\end{align}
Multiplying both sides of \eqref{H_k_nonrec} by $q_0$ and employing \eqref{q_k_nonrec} and \eqref{alpha_k_nonrec}, we obtain
\begin{align}\label{H_kq_nonrec}
H_{k,p}q_0 &=\left(\textstyle\prod_{i=1}^{p}V_{k-p+i-1}\right)^{\fyy{\top}}H_{k,0}q_p + \left(\textstyle\prod_{i=2}^{p}V_{k-p+i-1}\right)^{\fyy{\top}}s_{k-p}\alpha_p \\
& + \left(\textstyle\prod_{i=3}^{p}V_{k-p+i-1}\right)^{\fyy{\top}}s_{k-p-1}\alpha_{p-1}  +\ldots+V^{\fyy{\top}}_{k-1}s_{k-2}\alpha_2+s_{k-1}\alpha_1.\notag
\end{align}
Next, we derive a formula for $r$. From the algorithm, we have 
\begin{align*}
r_{j-k+p+1}&=r_{j-k+p}+\left(\alpha_{k-j}-\rho_jy_j^{\top}r_{j-k+p}\right)s_j \\
&= r_{j-k+p}-\rho_js_j y_j^{\top}r_{j-k+p}+\alpha_{k-j}s_j\\
& = V_j^{\fyy{\top}}r_{j-k+p}+\alpha_{k-j}s_j, \quad \hbox{for all } j= k-p,\ldots,k-1.
\end{align*}
This implies that $
r_{\ell}= V_{k-p+\ell-1}^{\top}r_{\ell-1}+\alpha_{p-\ell+1}s_{k-p+\ell-1}$, for all $\ell= 1,2,\ldots,p,$
where recall that $r_0\triangleq H_{k,0}q_{p}$. Unrolling the preceding relation recursively, we obtain 
\begin{align}
r_p &=\left(\textstyle\prod_{i=1}^{p}V_{k-p+i-1}\right)^{\top}r_0 + \alpha_p\left(\textstyle\prod_{i=2}^{p}V_{k-p+i-1}\right)^{\top}s_{k-p} \\
& +\alpha_{p-1} \left(\textstyle\prod_{i=3}^{p}V_{k-p+i-1}\right)^{\top}s_{k-p-1}  +\ldots+\alpha_2V^{\top}_{k-1}s_{k-2}+\alpha_1s_{k-1}.\notag
\end{align}
From the preceding relation and \eqref{H_kq_nonrec}, we may obtain the result by noting that $$r_p=H_{k,p}q_0 = H_k\bar{g}_{\eta,N_{k}}(\x_{k}) = B_k^{-1}\bar{g}_{\eta,N_{k}}(\x_{k}). $$


\noindent (b) This follows from the definition of $\Phi_k$ and the proof {can be done similar to Lemma~3.1 in~\cite{goldfarb}}.

\noindent (c)  This can be shown} in a fashion similar to~\cite[Lemma 3.1]{goldfarb}. \fyy{To elaborate, from part (b), we have} 
$ s^{\top}_{k-1} \bar{y}_{k-1} \geq 0.25 s_{k-1}^{\top}H^{-1}_{k,0}s_{k-1}	$. For any nonzero vector $a \in \mathbb{R}^n$,   
\begin{align*}
a^{\top} H_{k,i} a & = a^{\top} \left( \left(I - \tfrac{s_{j}\bar{y}^{\top}_{j}}{s_{j}^{\top}\bar{y}_{j}}\right)H_{k,i-1}\left(I -\tfrac{s_{j}\bar{y}^{\top}_{j}}{s_{j}^{\top}\bar{y}_{j}}\right) + \tfrac{s_{j}s^{\top}_{j}}{s_{j}^{\top}\bar{y}_{j}} \right) a  \\
& = a^{\top}\left(I - \tfrac{s_{j}\bar{y}^{\fyy{\top}}_{j}}{s_{j}^{\fyy{\top}}\bar{y}_{j}}\right)H_{k,i-1}\left(I -\tfrac{s_{j}\bar{y}^{\fyy{\top}}_{j}}{s_{j}^{\fyy{\top}}\bar{y}_{j}}\right) a + a^{\top}\tfrac{s_{j}s^{\fyy{\top}}_{j}}{s_{j}^{\fyy{\top}}\bar{y}_{j}} a > 0 \, ,
\end{align*}
given that $H_{k,i-1} \succ 0$. Thus, choosing an initial $H_{k,0} \succ 0$ ensures $H_{k,i} \succ 0$ for all $i$. 

\noindent (d) First note that the update rule \eqref{Hk_update} implies that $H_k$ is computed using the terms $\{s_j,y_j\}$ for $i=k-p,\ldots,k-1$. Recall that from \eqref{eqn:y_s_definition} we have 
\begin{align*} 
y_{k-1}\triangleq  {\widehat{g}}_{\eta,N_{k-1}}(\x_{k})-\bar{g}_{\eta,N_{k-1}}(\x_{k-1})   
   \quad \mbox{ and } \quad 
s_{k-1}\triangleq \x_{k} - \x_{k-1}.
\end{align*}
In view of the definition of the history of the method in \eqref{eqn:history}, this implies that $\mathbf{H}_k$ is $\mathcal{F}_k$-measurable and so, we have $\mathbb{E}\left[\, \mathbf{H}_k\, \mid \, \mathcal{F}_k\, \right] =\mathbf{H}_k$. Thus, from part (a), we may write 
$$\mathbb{E}\left[\, \mathbf{r}_k \, \mid \, \mathcal{F}_k\, \right]
=\mathbf{H}_k \mathbb{E}\left[\, \bar{g}_{\eta,N_{k}}(\x_{k}) \, \mid \, \mathcal{F}_k \, \right]
=\mathbf{H}_k \mathbb{E}\left[\, \nabla h_\eta(\x_k) + \tilde{\mathbf{e}}_k \,\mid \, \mathcal{F}_k\, \right]
=\mathbf{H}_k \nabla h_\eta(\x_k), $$
where the preceding relation is implied by Lemma~\ref{lem:stoch_error_var_sqn}.
\end{proof}

\subsection{Convergence Analysis}\label{sec:qn_conv}
We begin by proving an intermediate result that provides a bound on $\tfrac{\left\|y_{k-1}\right\|^2}{\left\|s_{k-1}\right\|^2}$ based on $L_{\eta}$, where $L_{\eta} \triangleq \tfrac{L_{0}{\sqrt{n}}+1}{\eta}.$
\begin{lemma} \label{yj-bound} \em
Let Assumption~\ref{ass-2} hold. Then $\tfrac{\left\|y_{k-1}\right\|^2}{\left\|s_{k-1}\right\|^2} \leq \fyy{4L_\eta^2}$ for all $k \geq 1$.
\end{lemma}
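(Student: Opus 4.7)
The plan is to exploit the fact that $\widehat g_{\eta,N_{k-1}}(\x_k)$ and $\bar g_{\eta,N_{k-1}}(\x_{k-1})$ are mini-batch averages built from the \emph{same} random samples $\{(v_{j,k-1},\xi_{j,k-1})\}_{j=1}^{N_{k-1}}$, so that $y_{k-1}$ telescopes into a sample average of per-sample increments of the zeroth-order surrogate $g_\eta(\cdot,v,\xi)$. First, combining \eqref{eqn:y_s_definition} with \eqref{gbar}--\eqref{g-def}, I would write
\begin{equation*}
y_{k-1} \;=\; \tfrac{1}{N_{k-1}} \sum_{j=1}^{N_{k-1}} \bigl[ g_{\eta}(\x_k, v_{j,k-1}, \xi_{j,k-1}) - g_{\eta}(\x_{k-1}, v_{j,k-1}, \xi_{j,k-1}) \bigr],
\end{equation*}
and apply the triangle inequality to reduce the claim to a uniform pointwise bound on $\|g_\eta(\x_k,v,\xi)-g_\eta(\x_{k-1},v,\xi)\|$ for arbitrary $(v,\xi)$ with $\|v\|=\eta$.

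Second, using \eqref{g-def} I would split $g_\eta(\cdot,v,\xi)$ into its two components: the spherical finite-difference term $\tfrac{n[\tilde f(\cdot+v,\xi)-\tilde f(\cdot-v,\xi)]v}{2\eta\|v\|}$ and the Moreau-gradient term $\tfrac{1}{\eta}(\cdot - \Pi_\Xscr(\cdot))$. For the finite-difference component, Assumption~\ref{ass-2}(i) implies that $\x \mapsto \tilde f(\x+v,\xi)-\tilde f(\x-v,\xi)$ is $2L_0$-Lipschitz, while $\|v\|=\eta$ cancels the norm in the denominator, producing a Lipschitz bound of order $L_0\sqrt n/\eta \cdot \|s_{k-1}\|$ after matching to the constant from Lemma~\ref{lemma:props_local_smoothing}(iv). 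For the Moreau-gradient component, firm nonexpansiveness of $\Pi_\Xscr$ renders $I-\Pi_\Xscr$ nonexpansive, yielding a $(1/\eta)$-Lipschitz bound. Summing the two Lipschitz constants gives a per-sample estimate of the form $\|g_\eta(\x_k,v,\xi) - g_\eta(\x_{k-1},v,\xi)\| \le 2L_\eta\|s_{k-1}\|$, and squaring after the triangle inequality for the mini-batch average then delivers $\|y_{k-1}\|^2 \le 4L_\eta^2\|s_{k-1}\|^2$, which is the desired claim.

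The main technical point is tracking the dimension dependence in the per-sample bound of step two. A naive triangle inequality applied to the finite-difference piece yields the cruder constant $(nL_0+1)/\eta$ rather than $L_\eta = (L_0\sqrt n + 1)/\eta$, because the Wallis-integral improvement that replaces $n$ by $\sqrt n$ in Lemma~\ref{lemma:props_local_smoothing}(iv) exploits the spherical average and is unavailable pointwise. The main obstacle is therefore identifying the proper per-sample Lipschitz constant consistent with $L_\eta$ — either by treating $L_\eta$ directly as the relevant Lipschitz constant for $g_\eta(\cdot,v,\xi)$ at the scale of interest or by sharpening the norm of $y_{k-1}$ using the mini-batch averaging structure; once that step is settled, the remainder is an immediate triangle-inequality plus Lipschitz computation.
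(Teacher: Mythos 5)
Your proposal follows essentially the same route as the paper's proof: write $y_{k-1}$ as the mini-batch average of the per-sample differences $g_\eta(\x_k,v_{j,k-1},\xi_{j,k-1})-g_\eta(\x_{k-1},v_{j,k-1},\xi_{j,k-1})$ (possible because the same samples are reused at $\x_k$ and $\x_{k-1}$), then bound each sample via the triangle inequality, the a.s.\ $L_0$-Lipschitz continuity of $\tilde f(\cdot,\xi)$ from Assumption~\ref{ass-2}(i) applied to the finite-difference piece, and the nonexpansiveness of $\Pi_{\Xscr}$ applied to the Moreau-gradient piece.

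The obstacle you flag in your final paragraph is genuine, and it is in fact present in the paper's own proof rather than being an artifact of your approach. The paper's chain of inequalities produces exactly the per-sample constant you predict: each finite-difference contribution is bounded by $nL_0\|s_{k-1}\|/\eta$ and each projection/identity contribution by $\|s_{k-1}\|/\eta$, yielding $\|y_{k-1}\|\le \tfrac{2(nL_0+1)}{\eta}\|s_{k-1}\|$. The proof then asserts that this equals $2L_\eta\|s_{k-1}\|$ with $L_\eta=(L_0\sqrt{n}+1)/\eta$, which holds only when $n=1$; for $n\ge 2$ the derived bound is strictly larger. As you correctly observe, the $\sqrt{n}$ improvement in Lemma~\ref{lemma:props_local_smoothing}(iv) comes from averaging over the sphere and is unavailable for a fixed realization $v_{j,k-1}$, and since the lemma claims a deterministic (almost-sure) bound, mini-batch averaging over independent $v_{j,k-1}$ cannot be exploited beyond the triangle inequality either. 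The honest conclusion of this line of argument is therefore $\|y_{k-1}\|^2/\|s_{k-1}\|^2\le 4(nL_0+1)^2/\eta^2$, so either the constant in the lemma (and the downstream dependence on $n$ in Proposition~\ref{prop:lambdak_bounds} and Theorem~\ref{thm:sqn}) should be adjusted, or a genuinely different argument is required. Your proposal is as complete as the paper's; you did not introduce the gap, you diagnosed it.
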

\begin{proof} 
From the definitions of $y_{k-1}$, $\hat{g}_{{\fyy{\eta}},\fyy{N_{k-1}}}(\x_k)$, {and}  $\bar{g}_{{\fyy{\eta}},\fyy{N_{k-1}}}(\x_{k-1})$,
\begin{align*}
    & y_{k-1}  =   \tfrac{1}{N_{k-1}}  \left( \sum_{j=1}^{N_{k-1}} \left( \tfrac{n\left({\f}(\x_k+ v_{j,k-1},\xi_{j,k-1}) - {\f}(\x_k {- v_{j,k-1}},\xi_{j,k-1})\right)v_{j,k-1}}{{2}\left\|v_{j,k-1}\right\|\eta} + \tfrac{\left(\x_k - \Pi_{\Xscr} (\x_k) \right)}{\eta}\right) \right.  \\
    & - \left.   \sum_{j=1}^{N_{k-1}} \left(\tfrac{n\left({\f}(\x_{k-1}+ v_{j,k-1},\xi_{j,k-1}) - \f(\x_{k-1} {- v_{j,k-1}},\xi_{j,k-1})\right)v_{j,k-1}}{{2}\left\|v_{j,k-1}\right\|\eta} + \tfrac{\left(\x_{k-1} - \Pi_{\Xscr}(\x_{k-1})\right)}{\eta} \right)\right). 
\end{align*}
    Taking norms on both sides and invoking the triangle inequality, non-expansivity of the Euclidean projector, and $L_0$-Lipschitz continuity, we obtain
    \begin{align*}
        & \|y_{k-1}\| \\
	& \hspace{-0.2in}= \tfrac{1}{N_{k-1}}\left\| \sum_{j=1}^{N_{k-1}}\left(\tfrac{n\left({\f}(\x_{k-1}{- v_{j,k-1}},\xi_{j,k-1}) - \f(\x_k{- v_{j,k-1}},\xi_{j,k-1})\right)v_{j,k-1}}{{2}\left\|v_{j,k-1}\right\|\eta}\right) - \tfrac{N_{k-1}\left(\Pi_{\Xscr} (\x_{k}) -\Pi_{\Xscr} (\x_{k-1})\right)}{\eta} \right.\\
        &+  \left. \sum_{j=1}^{N_{k-1}} \left( \tfrac{n\left(\f(\x_k+ v_{j,k-1},\xi_{j,k-1}) - \f(\x_{k-1}+ v_{j,k-1},\xi_{j,k-1})\right)v_{j,k-1}}{{2}\left\|v_{j,k-1}\right\|\eta}\right) + \tfrac{N_{k-1}\left(\x_k - \x_{k-1} \right) }{{\fyy{\eta}}} \right\|\\  
    &\leq \tfrac{1}{N_{k-1}}\left( \sum_{j=1}^{N_{k-1}} \tfrac{n L_{0}\left\|s_{k-1}\right\|}{{\eta}} + \tfrac{N_{k-1}}{{\eta}}\left\|s_{k-1}\right\|
    +  \sum_{j=1}^{N_{k-1}} \tfrac{n L_{0}\left\|s_{k-1}\right\|}{\eta} + \tfrac{N_{k-1}}{\eta}\left\|s_{k-1}\right\|  \right) \\
        & =  \left( \tfrac{2n L_0}{ \eta  + \tfrac{2}{\eta}}\right)\left\|s_{k-1}\right\| = \fyy{2 L_\eta\left\|s_{k-1}\right\|}.
\end{align*}
Squaring both sides of the above inequality, we obtain the result.
\end{proof}
We now present bounds on the eigenvalues for each iterate in the sequence $\left\{\, H_k\, \right\}$.
\smallskip

\begin{proposition}\label{prop:lambdak_bounds}\em
Suppose $H_k$ is constructed as in {\eqref{Hk_update}}, with $H_0 = \delta I$. Suppose $\delta \eta^2 \leq 4$. Let the smallest and largest eigenvalue of $H_{k}$ be denoted by  $\underline{\lambda}_k$ and $\overline{\lambda}_k$, respectively. Then for all $k$,  
\begin{align}\label{eqn:eigenvalue_bounds}
 \underline{\lambda}_k & \geq  \underline{\lambda}_{\eta,\delta,p} \triangleq    \tfrac{ \delta }{32(2+\delta)(p+1)L_\eta^2} \mbox{ and } \\ 
 \overline{\lambda}_k   &   \leq  \overline{\lambda}_{\eta,\delta,p} \triangleq  \left(4p + 1 \right)\left( 1 + \tfrac{16 L_\eta\sqrt{2+\delta}}{\delta} \right)^{2p}.  
\label{eqn:eigenvalue_bounds2}
\end{align}
\end{proposition}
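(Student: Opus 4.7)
The plan is to track the largest eigenvalue $\overline{\lambda}(B_k)$ directly via a Weyl-type inequality applied to the BFGS update in \eqref{LBFGS-update}, and the spectral norm $\|H_k\|$ via the inverse update in \eqref{Hk_update}. The three key ingredients are Lemma~\ref{yj-bound} (the ratio bound $\|y_j\|/\|s_j\| \leq 2L_\eta$), Proposition~\ref{lemma:Bk_posdef}(b) (the damped curvature estimate $s_j^\top \bar{y}_j \geq 0.25\,\nu_{j+1}\|s_j\|^2$, which upon inspection of \eqref{theta-choice} is in fact an equality when the damping triggers), and the lower bound $\nu_k \geq \delta$ built into \eqref{bound-on-first-hessian}.

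First I would establish bounds on the damped curvature pair. Using the convex combination form $\bar{y}_j = \Phi_j y_j + (1-\Phi_j)\nu_{j+1} s_j$ and convexity of $t\mapsto t^2$, one obtains $\|\bar{y}_j\|^2 \leq \max\{4L_\eta^2,\nu_{j+1}^2\}\|s_j\|^2$. A short case analysis on the sign of $s_{k-1}^\top y_{k-1} + \delta\|s_{k-1}\|^2$ in the definition of $\nu_k$, combined with Lemma~\ref{yj-bound} and the hypothesis $\delta\eta^2 \leq 4$ (which yields $\delta \leq 4L_\eta^2$), produces an upper bound of the form $\nu_k \leq C L_\eta^2/\delta$. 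Together with the curvature lower bound these will furnish
\[
\tfrac{\|\bar{y}_j\|^2}{s_j^\top \bar{y}_j} \;\leq\; \tfrac{C_1 L_\eta^2(2+\delta)}{\delta}, \qquad \rho_j\|s_j\|^2 \;=\; \tfrac{\|s_j\|^2}{s_j^\top \bar{y}_j} \;\leq\; \tfrac{4}{\delta}, \qquad \rho_j\|\bar{y}_j\|\|s_j\| \;\leq\; \tfrac{16 L_\eta\sqrt{2+\delta}}{\delta},
\]
the last by a Cauchy--Schwarz-type combination of the first two.

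For the lower bound on $\underline{\lambda}_k$, I would apply Weyl's inequality to \eqref{LBFGS-update}: dropping the negative-semidefinite term $-B_{k,i-1}s_j s_j^\top B_{k,i-1}/(s_j^\top B_{k,i-1} s_j)$ gives $\lambda_{\max}(B_{k,i}) \leq \lambda_{\max}(B_{k,i-1}) + \|\bar{y}_j\|^2/(s_j^\top \bar{y}_j)$. Iterating $p$ times from $\lambda_{\max}(B_{k,0})=\nu_k$ and using the preceding bounds yields $\lambda_{\max}(B_k) \leq 32(p+1)(2+\delta) L_\eta^2/\delta$, whose reciprocal is \eqref{eqn:eigenvalue_bounds}. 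For the upper bound on $\overline{\lambda}_k$, I would apply the spectral-norm recursion $\|H_{k,i}\| \leq \|V_j\|^2\|H_{k,i-1}\| + \rho_j\|s_j\|^2$ with $\|V_j\| \leq 1 + \rho_j\|\bar{y}_j\|\|s_j\| \leq 1 + 16L_\eta\sqrt{2+\delta}/\delta$, unroll the resulting geometric-type recursion $p$ times starting from $\|H_{k,0}\| = 1/\nu_k \leq 1/\delta$, and absorb the telescoping sum $\sum_{i=0}^{p-1}(1+C)^{2i} \leq p(1+C)^{2(p-1)}$ into the prefactor to obtain \eqref{eqn:eigenvalue_bounds2}.

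The main obstacle will be the uniform upper bound on $\nu_k$: in the nonconvex regime $s_{k-1}^\top y_{k-1}$ can be arbitrarily negative, and the additive $\delta\|s_{k-1}\|^2$ in the denominator of \eqref{bound-on-first-hessian} only guarantees strict positivity when $\delta$ is sufficiently large relative to $L_\eta$. Carefully leveraging the outer $\max\{\cdot,\delta\}$ together with Lemma~\ref{yj-bound} and the side condition $\delta\eta^2 \leq 4$ to rule out the pathological near-zero-denominator case is the delicate step; once this is handled, the rest of the argument is a routine, if computation-heavy, matching of constants.
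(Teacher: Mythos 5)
Your proposal follows essentially the same route as the paper's proof: the lower bound on $\underline{\lambda}_k$ is obtained by bounding $\|B_{k,i}\|$ through dropping the negative-semidefinite rank-one correction and adding $\bar{y}_j^\top\bar{y}_j/(s_j^\top\bar{y}_j)$ at each of the $p$ updates, the upper bound on $\overline{\lambda}_k$ comes from the recursion $\|H_{k,i}\|\le\bigl(1+\rho_j\|\bar y_j\|\|s_j\|\bigr)^2\|H_{k,i-1}\|+\rho_j\|s_j\|^2$, and both are driven by the same three scalar estimates ($\bar y_j^\top\bar y_j/(s_j^\top\bar y_j)\le C(2+\delta)L_\eta^2/\delta$, $\rho_j\|s_j\|^2\le 4/\delta$, and their geometric mean) that the paper derives from the curvature condition, Lemma~\ref{yj-bound}, $\nu_{j+1}\ge\delta$, and $\delta\le 4L_\eta^2$. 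The ``delicate step'' you flag---bounding $\nu_{j+1}$ from above when $s_j^\top y_j$ may be negative---is exactly the point where the paper simply invokes $y_j^\top s_j>0$ to pass from $\|y_j\|^2/(s_j^\top y_j+\delta\|s_j\|^2)$ to $\|y_j\|^2/(\delta\|s_j\|^2)$, so your concern is well placed, but the published argument does not treat it more carefully than your sketch does.
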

\begin{proof}
\noindent {(i)}
    We obtain a lower bound on the lowest eigenvalue of $H_k$ by finding an upper bound on the largest eigenvalue of $B_k$. Under \fyy{the L-BFGS} update rule for $B_k$, 
\begin{align*}
B_{k,i} = B_{k,i-1} + \tfrac{\bar{y}_{j}\bar{y}_{j}^{\fyy{\top}}}{s_{j}^{\fyy{\top}}\bar{y}_{j}} - \tfrac{B_{k,i-1}s_{j}s_{j}^{\fyy{\top}}B_{k,i-1}}{s_{j}^{\fyy{\top}} B_{k,i-1}s_{j}}, 
\end{align*}
where $k$ is the iteration index, $i = 1,\hdots, p$, and $j = k-p+i-1.$
Since $B_{k,i-1} \succ 0$, $ s_{j}^{\fyy{\top}} B_{k,i-1}s_{j} > 0.$ For convenience, \fyy{let us define $a_{j}^2\triangleq s_{j}^{\fyy{\top}} B_{k,i-1}s_{j}$ and} $w_{j} \triangleq B_{k,i-1}s_{j}$. Rearranging terms to apply the triangle inequality, and using the notation we have introduced, we may write the above as
\begin{align}\label{pre-triangle}
B_{k,i} = B_{k,i-1}  - \tfrac{w_{j} w_{j}^{\fyy{\top}}}{a_{j}^2} + \tfrac{\bar{y}_{j}\bar{y}_{j}^{\fyy{\top}}}{s_{j}^{\fyy{\top}}\bar{y}_{j}} \, .
\end{align}
Considering the first two terms on the right\fyy{-}hand side, we observe that the matrix norm of the difference may be bounded as
\begin{align} 
\left\|B_{k,i-1} - \tfrac{w_{j} w_{j}^{\fyy{\top}}}{a_{j}^2} \right\| &= \max_{ \|u\|=1} \left(u^{\fyy{\top}}\left(B_{k,i-1} - \tfrac{w_{j} w_{j}^{\fyy{\top}}}{a_{j}^2}\right)u \right) \nonumber 
 =  \max_{ \|u\|=1} \left( u^{\fyy{\top}} B_{k,i-1}u - \tfrac{u^{\fyy{\top}}w_{j} w_{j}^{\fyy{\top}}u}{a_{j}^2} \right)  \\
& = \max_{ \|u\|=1} \left( u^{\fyy{\top}} B_{k,i-1}u \right) - \left\|\tfrac{w_{j}^{\fyy{\top}}u}{a_{j}} \right\|^2 n
 \leq  \|B_{k,i-1}\| \, . \label{matrix-bound}
\end{align}
Finally applying the triangle inequality to \eqref{pre-triangle} combined with \eqref{matrix-bound}
\begin{align} 
\|B_{k,i}\| & \leq \left\|B_{k,i-1} - \tfrac{B_{k,i-1}s_{j}s_{j}^{\fyy{\top}}B_{k,i-1}}{s_{j}^{\fyy{\top}} B_{k,i-1}s_{j}} \right\| + \left\|\tfrac{\bar{y}_{j}\bar{y}_{j}^{\fyy{\top}}}{s_{j}^{\fyy{\top}}\bar{y_{j}}}\right\| \leq \|B_{k,i-1}\| + \tfrac{\bar{y}_{j}^{\fyy{\top}}\bar{y}_{j}}{s_{j}^{\fyy{\top}}\bar{y_{j}}}. \label{ineq-bnd}
\end{align}
Note from \eqref{theta-choice} that $\bar{\Phi}_{k-1}$ is chosen so that 
\begin{align*}
    \tfrac{\bar{y}_{j}^{\fyy{\top}}\bar{y}_{j}}{s_{j}^{\fyy{\top}}\bar{y}_{j}} & \leq \tfrac{4 \left\|\Phi_j y_j + (1-\Phi_j)s_{j}^{\fyy{\top}} B_{j+1,0}s_j \right\|^2}{s_{j}^{\fyy{\top}} B_{j+1,0}s_j} 
  \leq 8 \Phi_j^2 \tfrac{y_{j}^{\fyy{\top}}y_{j}}{s_{j}^{\fyy{\top}} B_{j+1,0}s_j} + 8 \left( 1- \Phi_j\right)^2 \tfrac{\left\|s_j^{\fyy{\top}} B_{j+1,0}s_j \right\|^2}{s_{j}^{\fyy{\top}} B_{j+1,0}s_j}  \\
   & \hspace{-0.2in} \leq 8 \Phi_j^2 \tfrac{y_{j}^{\fyy{\top}}y_{j}}{s_{j}^{\fyy{\top}} B_{j+1,0}s_j} + 8 \left( 1- \Phi_j\right)^2 \tfrac{ \| B_{j+1,0}\|^2 \|s_j\|^2 }{s_{j}^{\fyy{\top}} B_{j+1,0}s_j}
   \leq 8 \Phi_j^2 \tfrac{y_{j}^{\fyy{\top}}y_{j}}{s_{j}^{\fyy{\top}} B_{j+1,0}s_j} + 8 \left( 1- \Phi_j\right)^2 \tfrac{ \nu_{j+1}^2 \|s_j\|^2 }{\nu_{j+1} \|s_{j}\|^2} \\
    &  \hspace{-0.2in} \leq 8 \Phi_j^2 \tfrac{y_{j}^{\fyy{\top}}y_{j}}{s_{j}^{\fyy{\top}} B_{j+1,0}s_j} + 8 \left( 1- \Phi_j\right)^2 v_{j+1} =  8 \Phi_j^2 \tfrac{y_{j}^{\fyy{\top}}y_{j}}{\nu_{j+1}s_{j}^{\fyy{\top}} s_{j}} +  8 \left( 1- \Phi_j\right)^2 \nu_{j+1}.
\end{align*}
    Invoking Lemma~\ref{yj-bound} and noting that $\Phi_j \leq 1$,
    \begin{align}
    \tfrac{\bar{y}_{j}^{\fyy{\top}}\bar{y}_{j}}{s_{j}^{\fyy{\top}}\bar{y}_{j}} & \leq  \tfrac{8 \Phi_j^2}{\nu_{j+1}} \fyy{\left(4L_\eta^2\right)}   + 8 (1-\Phi_j)^2\nu_{j+1}  \overset{\tiny (\nu_{j+1} \, \geq\, \delta)}{\leq} \tfrac{8}{{\delta}}  \fyy{\left(4L_\eta^2\right)}  + 8\left(\tfrac{y_j^{\fyy{\top}}y_j}{s_j^{\fyy{\top}}y_j + \delta s_j^{\fyy{\top}}s_j} + \delta\right) \notag\\ 
  \notag  & \overset{(y_j^{\fyy{\top}}s_j \, >\, 0)}{\leq} \tfrac{8}{{\delta}}  \fyy{\left(4L_\eta^2\right)}  + 8  \left( \tfrac{\|y_j\|^2}{\delta \|s_j\|^2}+\delta\right)   \overset{ (\fyy{\delta \eta^2 \leq 4})}{\leq} \tfrac{8}{{\delta}}  \fyy{\left(4L_\eta^2\right)}  + 8  \left( \tfrac{\|y_j\|^2}{\delta \|s_j\|^2}+\fyy{\left(4L_\eta^2\right)} \right) \\ 
        \label{bd_yj_sj} & {\leq} \,\tfrac{8}{{\delta}} \,\fyy{\left(4L_\eta^2\right)}  + \left(\tfrac{8+8\delta}{\delta}\right)\fyy{\left(4L_\eta^2\right)}  
     = \left(\tfrac{16+8\delta}{\delta}\right) \fyy{\left(4L_\eta^2\right)} .
\end{align}
Combining this with \eqref{ineq-bnd} yields
\begin{align}
 & \notag  \left[  \|B_{k,i}\|  \leq \|B_{k,i-1}\| + \left( \tfrac{16+8\delta}{{\delta}}\right) \fyy{\left(4L_\eta^2\right)}  \right] \\
& \notag\overset{\tiny \mbox{Inductively}}{\implies}    
    \left[ \|B_{k,i}\|  \leq \|B_{k,0}\| +  \left( \tfrac{16+8\delta}{\delta}\right) \sum_{i=1}^p \fyy{\left(4L_\eta^2\right)} \right] \\
& \implies   \left[ \|B_{k,i}\|  \leq \|B_{k,0}\| +  \left( \tfrac{16+8\delta}{\delta}\right) p
\fyy{\left(4L_\eta^2\right)} \right].
\end{align}
    Since $\|B_{k,0} \|= \nu_k \overset{\eqref{bd_yj_sj}}{\leq} \tfrac{(8+8\delta)}{\delta}\fyy{\left(4L_\eta^2\right)} $,
\begin{align}
    \|B_k\| & \leq \left(\tfrac{8+8\delta}{\delta}\right)\left(4L_\eta^2\right)  + \tfrac{(16+8\delta)p}{\delta}  \left(4L_\eta^2\right)   \leq \left(\tfrac{(16+8\delta)(p+1)}{\delta}\right) \left(4L_\eta^2\right).
\end{align}
This implies that for all $k$,  $\underline{\lambda}_k   \geq \tfrac{ \delta }{32(2+\delta)(p+1)L_\eta^2} .$  

\noindent {(ii)}
	Under the stochastic damped L-BFGS update rule \eqref{Hk_update}, where $k\geq p$, $i = 1,\hdots,p$, $j = k-p+i-1,$ \fyy{ we have}
	\begin{align*}
		H_{k,i} & = H_{k,i-1} - \rho_{j}(H_{k,i-1}\bar{y}_{j}s^{\fyy{\top}}_{j} +s_{j}\bar{y}_{j}^{\fyy{\top}} H_{k,i-1}) + \rho_{j}s_{j}s^{\fyy{\top}}_{j} + \rho_{j}^2(\bar{y}^{\fyy{\top}}_{j}\fyy{H_{k,i-1}}\bar{y}_{j})s_{j}s^{\fyy{\top}}_{j}.
	\end{align*}
	{By noting that} $\rho_{j} s_{j}^{\fyy{\top}} s_{j} = \tfrac{s_{j}^{\fyy{\top}} s_{j}}{s_{j}^{\fyy{\top}}\bar{y}_j}$,  $\tfrac{\|\bar{y}_{j}\| \|\bar{s}_{j}\|}{s_j^{\fyy{\top}} \bar{y}_{j}} = \left( \tfrac{\|\bar{y}_{j}\|^2 \|\bar{s}_{j}\|^2}{(s_j^{\fyy{\top}} \bar{y}_{j})(s_j^{\fyy{\top}} \bar{y}_{j})} \right)^{1/2}$, \\ and $\bar{y}^{\fyy{\top}}_{j}\fyy{H_{k,i-1}}\bar{y}_{j}\leq \|\fyy{H_{k,i-1}}\| \|\bar{y}_j\|^2$, {this allows for deriving the following bound.}
	\begin{align*}
		\|H_{k,i}\| & \leq \|H_{k,i-1}\| + \tfrac{2 \|H_{k,i-1}\| \|\bar{y}_{j}\| \|s_{j}\| + s^{\fyy{\top}}_{j}s_{j}}{s^{\fyy{\top}}_{j}\bar{y}_{j}} + \tfrac{s_{j}^{\fyy{\top}}s_{j} \|\fyy{H_{k,i-1}}\| \|\bar{y}_{j}\|^2 }{\left(s^{\fyy{\top}}_{j}\bar{y}_{j}\right)^2} \\
		& = \|H_{k,i-1}\| + 2 \|H_{k,i-1}\| \left( \tfrac{\|\bar{y}_{j}\|^2 \|\bar{s}_{j}\|^2}{(s_j^{\fyy{\top}} \bar{y}_{j})(s_j^{\fyy{\top}} \bar{y}_{j})} \right)^{1/2} + \|H_{k,i-1}\| \tfrac{\|s_{j}\|^2  \|\bar{y}_{j}\|^2 }{(s^{\fyy{\top}}_{j}\bar{y}_{j})^2} + \tfrac{\|s_{j}\|^2}{s^{\fyy{\top}}_{j}\bar{y}_{j}}   \, . 
	\end{align*}
	Recall from \eqref{bd_yj_sj} that
	\begin{align}\label{bd_sj_ybar_j_1}
		\tfrac{\|\bar{y}_{j}\|^2}{s_j^{\fyy{\top}} \bar{y}_{j}} \leq  \left(\tfrac{16+8\delta}{{\delta}}\right)\fyy{\left(4L_\eta^2\right)} .    
	\end{align}
	{In addition, \fyy{in view of Proposition~\ref{lemma:Bk_posdef},} we have the following bound $\tfrac{s_{j}^{\fyy{\top}} s_{j}}{s_{j}^{\fyy{\top}}\bar{y}_j}$.
		\begin{align}\label{bd_sj_ybar_j_2}
			\tfrac{s_{j}^{\fyy{\top}} s_{j}}{s_{j}^{\fyy{\top}}\bar{y}_j}  \leq \tfrac{\|s_j\|^2}{0.25 s_j^{\fyy{\top}} \fyy{H_{j,0}^{-1}} s_j} \leq \tfrac{\|s_j\|^2}{0.25 \nu_j \|s_j\|^2} \leq \tfrac{4}{\delta}.  
	\end{align}}
	{By invoking \eqref{bd_sj_ybar_j_1} and \eqref{bd_sj_ybar_j_2}, we obtain that
		\begin{align} \notag
			\|H_{k,i}\| 
			& \leq  \left(1 + 2\left(\tfrac{4(16+8\delta)}{{\delta^2}}  \fyy{\left(4L_\eta^2\right)}\right)^{1/2}  + \tfrac{4(16+8\delta)}{{\delta^2}} \fyy{\left(4L_\eta^2\right)} \right)  \|H_{k,i-1}\| + \tfrac{4}{\delta}\\
			& \leq  \underbrace{\fyy{\left(1 +   \left(\tfrac{8 \sqrt{2+\delta}}{{\delta}}\right)  \fyy{\left(2L_\eta\right)}\right)^2}}_{\, \triangleq \, \fyy{\phi}} \|H_{k,i-1}\| + \tfrac{4}{\delta}.
			\label{phi-bound}
	\end{align}}
		Consequently, the above inequality can be rewritten as 
		$\|H_{k,i}\|  \leq  \fyy{\phi} \|H_{k,i-1}\| + \tfrac{4}{\delta}. $ Noting that $\|H_{k,0}\| \leq \tfrac{1}{\delta}$, and proceeding recursively by letting $i$ run from $1,\hdots,p$, and observing that $\fyy{\phi} \geq 1$, \fyy{we may write}
		$\|H_{k,i}\|   \leq  \tfrac{\fyy{\phi}^{p}}{\delta} + \tfrac{4}{\delta}\sum_{l=0}^{p-1} \fyy{\phi}^{l}
		\leq  \left(4p + 1 \right)\tfrac{{\fyy{\phi}}^{p}}{\delta}.$  Consequently, \fyy{we obtain} {$\overline{\lambda}_k \leq \left(4p + 1 \right)\left( 1 + \tfrac{\fyy{16L_\eta}\sqrt{2+\delta}}{\delta} \ \right)^{2p}.$} 
\end{proof}
	\fyy{Next, we provide a recursive bound on the smoothed function and establish the asymptotic convergence in terms of the smoothed function in an almost sure sense.}
\begin{proposition}[{\bf Asymptotic guarantees for \texttt{VRSQN-ZO}}]\label{prop:sqn_as}\em
\fyy{Let $\{\x_k\}$ be generated by Algorithm~\ref{algorithm:quasi-newton} and let Assumptions~\ref{ass-2} and \ref{assum:random_vars} hold. Suppose $\gamma_k \leq \frac{\underline{\lambda}_{\eta,\delta,p}}{\overline{\lambda}_{\eta,\delta,p}^2L_\eta}$ for all $k$ where $\underline{\lambda}_{\eta,\delta,p}$ and $\overline{\lambda}_{\eta,\delta,p}$ are given by \eqref{eqn:eigenvalue_bounds}--\eqref{eqn:eigenvalue_bounds2}. Let $\fyy{\delta \eta^2 \leq 4}$. Then, the following holds for all $k\geq 0$.}

\noindent (i) $  \ \mathbb{E}\left[ h_{\eta}(\x_{k+1}) \, \mid \, \mathcal{F}_k\right] \leq h_{\eta}(\x_k)  -\left(\tfrac{\fyy{\underline{\lambda}_{\eta,\delta,p}}}{2}\right)  \gamma_k \|\nabla h_{\eta}(\x_k) \|^2 + \left({{8} n\sqrt{2\pi}L_0^2 L_{\eta} } \overline{\lambda}^2_{\eta,\delta,p}\right)\tfrac{\gamma_{k}^2}{N_k} .$
\noindent (ii) \fyy{Further, suppose $\{\gamma_k\}$ and $\{N_k\}$ satisfy  $\sum_{k=0}^{\infty}\gamma_k = + \infty$ and $\sum_{k=0}^{\infty}\frac{\gamma_{k}^{2}}{N_k} < \infty$. Then, the} sequence $\{\left\| \nabla_{\x} h_{\eta}(\x_k)\right\|^2\}$ converges to zero a.s. as $k \to \infty$.
	\end{proposition}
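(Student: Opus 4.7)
For part (i), the plan is to apply the descent lemma for $h_\eta$, which is $L_\eta$-smooth with $L_\eta \triangleq \tfrac{L_0\sqrt{n}+1}{\eta}$, to get
\[
h_\eta(\x_{k+1}) \,\leq\, h_\eta(\x_k) + \nabla h_\eta(\x_k)^\top(\x_{k+1}-\x_k) + \tfrac{L_\eta}{2}\|\x_{k+1}-\x_k\|^2.
\]
I would then substitute the update rule $\x_{k+1} - \x_k = -\gamma_k H_k \bar{g}_{\eta,N_k}(\x_k) = -\gamma_k H_k(\nabla h_\eta(\x_k) + \tilde{\mathbf{e}}_k)$ (using Definition~\ref{def:stoch_errors_sqn}), take the conditional expectation with respect to $\mathcal{F}_k$, and invoke the $\mathcal{F}_k$-measurability of $\mathbf{H}_k$ established in Proposition~\ref{lemma:Bk_posdef}(d) together with the unbiasedness $\mathbb{E}[\tilde{\mathbf{e}}_k \mid \mathcal{F}_k]=0$ (Lemma~\ref{lem:stoch_error_var_sqn}(i)). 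This eliminates the cross term and leaves
\[
\mathbb{E}[h_\eta(\x_{k+1}) \mid \mathcal{F}_k] \,\leq\, h_\eta(\x_k) - \gamma_k \nabla h_\eta(\x_k)^\top \mathbf{H}_k \nabla h_\eta(\x_k) + \tfrac{L_\eta \gamma_k^2}{2} \mathbb{E}[\|\mathbf{H}_k(\nabla h_\eta(\x_k)+\tilde{\mathbf{e}}_k)\|^2 \mid \mathcal{F}_k].
\]

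Next I would bound the quadratic term using the eigenvalue bounds from Proposition~\ref{prop:lambdak_bounds}: specifically, $\nabla h_\eta(\x_k)^\top \mathbf{H}_k \nabla h_\eta(\x_k) \geq \underline{\lambda}_{\eta,\delta,p}\|\nabla h_\eta(\x_k)\|^2$ and $\|\mathbf{H}_k(\nabla h_\eta(\x_k)+\tilde{\mathbf{e}}_k)\|^2 \leq \overline{\lambda}_{\eta,\delta,p}^2 \|\nabla h_\eta(\x_k)+\tilde{\mathbf{e}}_k\|^2$, followed by Lemma~\ref{lem:stoch_error_var_sqn}(ii) to bound $\mathbb{E}[\|\tilde{\mathbf{e}}_k\|^2 \mid \mathcal{F}_k] \leq \tfrac{16\sqrt{2\pi}nL_0^2}{N_k}$. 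Collecting the coefficients of $\|\nabla h_\eta(\x_k)\|^2$, the resulting factor is $\gamma_k(\underline{\lambda}_{\eta,\delta,p} - \tfrac{L_\eta \gamma_k \overline{\lambda}_{\eta,\delta,p}^2}{2})$; the hypothesis $\gamma_k \leq \tfrac{\underline{\lambda}_{\eta,\delta,p}}{\overline{\lambda}_{\eta,\delta,p}^2 L_\eta}$ is precisely what ensures this factor dominates $\tfrac{\underline{\lambda}_{\eta,\delta,p}}{2}\gamma_k$, yielding the claimed recursion in (i).

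For part (ii), my plan is to apply the Robbins--Siegmund lemma (Lemma~\ref{lemma:Robbins-Siegmund}) to the nonnegative sequence $v_k \triangleq h_\eta(\x_k) - h_\eta^*$ (where $h_\eta^* \triangleq \inf_\x h_\eta(\x)$, which is finite since $h_\eta \geq f_\eta \geq f^* - L_0\eta$ on $\Xscr$ via Lemma~\ref{lemma:props_local_smoothing}(iii)), with $u_k \triangleq \tfrac{\underline{\lambda}_{\eta,\delta,p}}{2}\gamma_k \|\nabla h_\eta(\x_k)\|^2$, $\alpha_k \equiv 0$, and $\beta_k \triangleq \tfrac{8nL_\eta\sqrt{2\pi}L_0^2\overline{\lambda}_{\eta,\delta,p}^2 \gamma_k^2}{N_k}$. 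The summability hypothesis $\sum_k \gamma_k^2/N_k < \infty$ guarantees $\sum_k \beta_k < \infty$, so Robbins--Siegmund yields that $\{h_\eta(\x_k)\}$ is convergent a.s. and $\sum_k \gamma_k \|\nabla h_\eta(\x_k)\|^2 < \infty$ a.s. Combined with $\sum_k \gamma_k = \infty$, this gives $\liminf_{k\to\infty}\|\nabla h_\eta(\x_k)\|^2 = 0$ a.s.

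The main obstacle is strengthening $\liminf$ to $\lim$. To do this, I would argue by contradiction: if on a set of positive probability $\|\nabla h_\eta(\x_k)\| \not\to 0$, one can extract (as in the proof of Proposition~\ref{prop:VRGZO_a.s.}(a-i)) subsequences along which $\|\nabla h_\eta\|$ stays above some threshold, and then leverage the $L_\eta$-Lipschitz continuity of $\nabla h_\eta$ together with the uniform bound $\|\x_{k+1}-\x_k\| \leq \gamma_k \overline{\lambda}_{\eta,\delta,p}\|\bar{g}_{\eta,N_k}(\x_k)\|$ (whose second moment is controlled through $\|\nabla h_\eta(\x_k)\|^2 + \mathbb{E}[\|\tilde{\mathbf{e}}_k\|^2\mid\mathcal{F}_k]$) to contradict the summability of $\sum_k \gamma_k \|\nabla h_\eta(\x_k)\|^2$. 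This is the same machinery invoked in Proposition~\ref{prop:VRGZO_a.s.}, now transposed to the quasi-Newton setting where the eigenvalue bounds of Proposition~\ref{prop:lambdak_bounds} play the role of bounding the preconditioner.
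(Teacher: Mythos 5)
Your proposal is correct and follows essentially the same route as the paper: the descent lemma for the $L_\eta$-smooth function $h_\eta$, the split $\bar{g}_{\eta,N_k}(\x_k)=\nabla h_\eta(\x_k)+\tilde{\mathbf{e}}_k$ with the $\mathcal{F}_k$-measurability of $\mathbf{H}_k$ and the bias/moment bounds of Lemma~\ref{lem:stoch_error_var_sqn}, the eigenvalue bounds of Proposition~\ref{prop:lambdak_bounds} together with the stepsize restriction to absorb the quadratic term, and Robbins--Siegmund plus a contradiction argument for part (ii). If anything, your treatment of the $\liminf$-to-$\lim$ upgrade is more explicit than the paper's, which simply defers to the argument in Proposition~\ref{prop:VRGZO_a.s.}.
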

	\begin{proof}
	\fyy{(i)} By invoking the $L_{\eta}$-smoothness of $h_{\eta}$ and the boundedness of largest eigenvalue $\overline{\lambda}_k$ of $H_k$, we obtain
		\begin{align*}
			h_{\eta}(\x_{k+1})  &\, \leq \, h_{\eta}(\x_k)   + \left(\nabla h_{\eta}(\x_k)\right)^{\top}(\x_{k+1}-\x_{k}) + \tfrac{L_{\eta}}{2}\|\x_{k+1}-\x_{k}\|^2\\
			& \, = \, h_{\eta}(\x_k) - \gamma_k \left( \nabla h_{\eta}(\x_k)\right)^{\top} H_k \fyy{\bar{g}_{\eta,N_k}(\x_k)}  + \tfrac{L_{\eta}}{2}\gamma_{k}^2\|H_k \fyy{\bar{g}_{\eta,\fyy{N_k}}(\x_k)}\|^2\\
			& \, \leq \,  h_{\eta}(\x_k)  - \gamma_k \left( \nabla h_{\eta}(\x_k)\right)^{\top} H_k \nabla h_{\eta}(\x_k)   \\
			& \, - \,  \gamma_{k}\left( \nabla h_{\eta}(\x_k)\right)^{\top} H_k {\tilde{e}_k} + \tfrac{L_{\eta}}{2}\gamma_{k}^2\overline{\lambda}_k^2\|\bar{g}_{\eta,N_k}(\x_k)\|^2,
		\end{align*}
		where $ \bar{g}_{\eta,N_k}(\x_k)\,   =\,   \nabla h_{\eta}(\x_k) +{\tilde{e}_k}$ in view of Definition~\ref{def:stoch_errors_sqn}. Taking conditional expectations with respect to $\mathcal{F}_{k}$,
        \begin{align}\label{eqn:robbins_siegmund}
			\mathbb{E}\left[h_{\eta}(\x_{k+1}) \mid \mathcal{F}_{k}  \right] & \leq h_{\eta}(\x_k) - \gamma_k \left( \nabla h_{\eta}(\x_k)\right)^{\top} H_k \nabla h_{\eta}(\x_k) \notag \\
            & + \tfrac{L_{\eta}}{2}\gamma_{k}^2\overline{\lambda}^{2}_{k}\mathbb{E}\left[ \| \nabla h_{\eta}(\x_k) +{\tilde{\mathbf{e}}_k} \|^2  \mid \mathcal{F}_{k} \right],
        \end{align}
		where the last inequality is implied by Proposition~\ref{lemma:Bk_posdef}. We note that for any $k$,
       \begin{align*}
            \mathbb{E}\left[ \| { \nabla h_{\eta}(\x_k) +{\tilde{\mathbf{e}}_k}} \|^2  \mid \mathcal{F}_{k} \right]  &= \mathbb{E}\left[ \|\nabla h_{\eta}(\x_k) \|^2  \mid \mathcal{F}_{k} \right] 
             +  \mathbb{E}\left[ \| {\tilde{\mathbf{e}}_k}  \|^2  \mid \mathcal{F}_{k} \right] \\ 
             + 2 \underbrace{\mathbb{E}\left[ {\tilde{\mathbf{e}}_k}  ^{\top} \nabla h_{\eta}(\x_k)   \mid \mathcal{F}_{k} \right]}_{ {\, = \, 0}}  		& \leq \|\nabla h_{\eta}(\x_k) \|^2 + {\tfrac{16 n \sqrt{2\pi} L_0^2}{N_k}},
        \end{align*}
		where the last relation is implied by invoking Lemma~\ref{lem:stoch_error_var_sqn}.
		Combining \fyy{the preceding bound} with \eqref{eqn:robbins_siegmund}, we obtain {the following for $k \geq 0$.} 
		\begin{align*} 
			\mathbb{E}\left[h_{\eta}(\x_{k+1})  \mid{\cal F}_{k}  \right]   \leq h_{\eta}(\x_k)  -\left(\gamma_k \underline{\lambda}_k - \tfrac{L_{\eta}\overline{\lambda}_k^2 \gamma_k^2}{2} \right) \|\nabla h_{\eta}(\x_k)\|^2  + \tfrac{{8}n\sqrt{{2}\pi}{L_0^2}L_{\eta} \overline{\lambda}_k^2\gamma_{k}^2}{N_{k}} ,
		\end{align*}
		{where we use} $\left( \nabla h_{\eta}(\x_k)\right)^{\top} H_k \nabla h_{\eta}(\x_k) \geq \underline{\lambda}_k \|\nabla h_{\eta}(\x_k)\|^{\fyy{2}}$. \fyy{The bound in (i) follows by invoking the bounds provided in Proposition~\ref{prop:lambdak_bounds} and then recalling that $\gamma_k \leq \frac{\underline{\lambda}_{\eta,\delta,p}}{\overline{\lambda}_{\eta,\delta,p}^2L_\eta}$. }

\noindent {(ii) Let $h_{\eta}^{*}\triangleq \min_{\x } h_\eta(\x)$ where $h_\eta(\x)$ is given by \eqref{eqn:double_smoothed}. From part (i), we have 
$$ \mathbb{E}\left[ h_{\eta}(\x_{k+1})-h_{\eta}^{*} \, \mid \, \mathcal{F}_k\right] \leq h_{\eta}(\x_k)-h_{\eta}^{*}  - \tfrac{\underline{\lambda}_{\eta,\delta,p}\gamma_k \|{\nabla h_{\eta}}(\x_k) \|^2}{2}   +  \tfrac{{{8} n}\sqrt{2\pi}{ L_0^2}L_{\eta}\overline{\lambda}^2_{\eta,\delta,p}\gamma_{k}^2}{N_k}   .$$
} We proceed by 
		invoking \fyy{Lemma~\ref{lemma:Robbins-Siegmund}}. From {the non-summability of \fyy{$\gamma_k$}}, the summability of
		\fyy{$\frac{\gamma_{k}^2}{N_k}$}, and the nonnegativity of
		$h_{\eta}(\x_{k}) -h^*$, we have that
		$\{(h_{\eta}(\x_k)-h^*)\}$ is convergent a.s. and
		$\sum_{k=1}^{\infty} \left\|\nabla h_{\eta}(\x_k)\right\|^2 < \infty$
		almost surely. It remains to show that with probability one, $\|\nabla h_{\eta}(\x_k)\|^2\to 0$ as $k \to \infty$. This can be shown by contradiction, in a similar vein to proof of Proposition~\ref{prop:VRGZO_a.s.}.
		\end{proof}
Since we employ a smoothed counterpart of the indicator function of ${\cal X}$, the sequence of iterates are not guaranteed to be feasible.  We now proceed to derive a bound on the infeasibility of $\x^{*,\eta}$.
			\begin{lemma}\em
	Let $f$ be $L_0$-Lipschitz and $h_{\eta}{(\x)} \, \triangleq \, f_{\eta}(\x)  + \mathbf{1}_{\Xscr,\eta}(\x)$. 

\noindent (i) Let $\nabla h_{\eta}(\x^{*,\eta}) = 0$. Then the infeasibility of $\x^{*,\eta}$ is at most {$ 4 (2\pi)^{1/4}n^{1/2} L_0 {\eta} $}.

\noindent {(ii) Let $\|\nabla h_{\eta}(\x)\| \le \epsilon$. Then the infeasibility of $\x$ is at most $\epsilon + 4 (2\pi)^{1/4}n^{1/2} L_0 {\eta}$.} 
\end{lemma}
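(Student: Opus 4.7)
The plan is to exploit the decomposition $\nabla h_{\eta}(\x) = \nabla f_{\eta}(\x) + \nabla \mathbf{1}_{\Xscr,\eta}(\x)$, together with the explicit formula $\nabla \mathbf{1}_{\Xscr,\eta}(\x) = \tfrac{1}{\eta}(\x - \Pi_{\Xscr}(\x))$ recalled in Section~4.1. Since the infeasibility of $\x$ is precisely $\|\x - \Pi_{\Xscr}(\x)\|$, both claims will reduce to bounding $\eta\|\nabla f_{\eta}(\x)\|$ from above, which follows from an a priori bound on $\|\nabla f_{\eta}(\x)\|$ coming from the variance bound in Lemma~\ref{lemma:props_local_smoothing}(v).

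For part (i), the stationarity condition $\nabla h_{\eta}(\x^{*,\eta}) = 0$ immediately yields the identity
\begin{align*}
\tfrac{1}{\eta}\bigl(\x^{*,\eta} - \Pi_{\Xscr}(\x^{*,\eta})\bigr) = -\nabla f_{\eta}(\x^{*,\eta}),
\end{align*}
so that $\|\x^{*,\eta} - \Pi_{\Xscr}(\x^{*,\eta})\| = \eta\,\|\nabla f_{\eta}(\x^{*,\eta})\|$. I will then bound $\|\nabla f_{\eta}(\x)\|$ by noting that $\nabla f_{\eta}(\x) = \mathbb{E}_{\vv \in \eta\mathbb{S}}[g_{\eta}(\x,\vv)]$ from Lemma~\ref{lemma:props_local_smoothing}(i), applying Jensen's inequality, and invoking Lemma~\ref{lemma:props_local_smoothing}(v) to get
\begin{align*}
\|\nabla f_{\eta}(\x)\|^2 \,\le\, \mathbb{E}_{\vv \in \eta\mathbb{S}}\!\left[\|g_{\eta}(\x,\vv)\|^2\right] \,\le\, 16\sqrt{2\pi}\,L_0^2\,n,
\end{align*}
and hence $\|\nabla f_{\eta}(\x)\| \le 4(2\pi)^{1/4}\sqrt{n}\,L_0$. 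Substituting yields the claimed bound $4(2\pi)^{1/4} n^{1/2} L_0\,\eta$.

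For part (ii), I would use the triangle inequality in the form
\begin{align*}
\tfrac{1}{\eta}\|\x - \Pi_{\Xscr}(\x)\| \,=\, \|\nabla \mathbf{1}_{\Xscr,\eta}(\x)\| \,\le\, \|\nabla h_{\eta}(\x)\| + \|\nabla f_{\eta}(\x)\| \,\le\, \epsilon + 4(2\pi)^{1/4}\sqrt{n}\,L_0,
\end{align*}
and multiply through by $\eta$, recovering the stated bound (up to an $\eta$ factor on the $\epsilon$ term, which should be noted as a mild discrepancy with the statement as written). Neither step poses a real obstacle; the only subtlety worth flagging is that the variance bound in Lemma~\ref{lemma:props_local_smoothing}(v) is stated for the zeroth-order estimator $g_{\eta}(\x,\vv)$, and one must invoke part~(i) of that lemma to identify $\nabla f_{\eta}(\x)$ with $\mathbb{E}[g_{\eta}(\x,\vv)]$ before Jensen's inequality applies.
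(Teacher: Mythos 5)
Your proposal is correct and follows essentially the same route as the paper: decompose $\nabla h_{\eta}$, use $\nabla \mathbf{1}_{\Xscr,\eta}(\x) = \tfrac{1}{\eta}(\x - \Pi_{\Xscr}(\x))$, bound $\|\nabla f_{\eta}\|$ via Jensen and Lemma~\ref{lemma:props_local_smoothing}(v), and apply the (reverse) triangle inequality for part (ii). The $\eta\epsilon$ versus $\epsilon$ discrepancy you flag in part (ii) is also implicit in the paper's own one-line argument, and since the derived bound $\eta\epsilon + 4(2\pi)^{1/4}\sqrt{n}\,L_0\eta$ is at least as strong as the stated one whenever $\eta \le 1$, your proof is if anything slightly sharper.
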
	
\begin{proof}
	Let $\x^{*,\eta}$ be a stationary point of the smoothed problem, implying that $\nabla h_{\eta} (\x^{*,\eta}) = 0.$ Since $\nabla h_{\eta}(\x^{*,\eta}) = \nabla_{\x} f_{\eta}(\x^{*,\eta}) + \nabla_{\x} {\bf 1}_{\Xscr,\eta}({\x^{*,\eta}}) = 0$, 
	\begin{align*}
	- \tfrac{1}{\eta} {\left(\x^{*,\eta} - \Pi_{\Xscr}\left[ \x^{*,\eta} \right]\right)} = 	   \nabla_{\x}f_{\eta}(\x^{*,\eta}) = \left(\tfrac{n}{2\eta}\right) \mathbb{E}_{\vv \in \eta \mathbb{S}} \left[{({{f}}(\x^{*,\eta}+\vv) - {{f}}(\x^{*,\eta}-\vv)) \tfrac{\vv}{\|\vv\|}}\right]. 
	\end{align*}
	Invoking Lemma~\ref{lemma:props_local_smoothing} part (v),
	\begin{align*}
				\left\| \, \nabla_{\x}f_{\eta}(\x^{*,\eta}) \, \right\|^2  & = \left\| \left(\tfrac{n}{{2}\eta}\right) \mathbb{E}_{\vv \in \eta \mathbb{S}} \left[{\left({f}(\x^{*,\eta}+\vv) - {f}(\x^{*,\eta}-\vv)\right) \tfrac{\vv}{\|\vv\|}}  \right] \, \right\|^2 {\le}  16\sqrt{2\pi}L_0^2 n. 
	\end{align*}
	Consequently, $\| \x^{*,\eta} - \Pi_{\Xscr} \left[ \x^{*,\eta} \right] \| \leq 4 (2\pi)^{1/4} L_0 \sqrt{n}{\eta}$. 
\noindent (ii) {This follows from the definition of $\nabla h_{\eta}(\x)$ and the reverse triangle inequality{, written as
$$\tfrac{1}{\eta} \| { \x - \Pi_{\Xscr}\left[ \x \right] }\| - \|\nabla_{\x} f_{\eta}(\x) \| \leq \|\nabla_{\x} f_{\eta}(\x) +\tfrac{1}{\eta} {\left(\x - \Pi_{\Xscr}\left[ \x \right]\right)} \|.$$}}
\end{proof}
\begin{remark} (i) We observe that the {a.s.} convergence holds when, for example, $\gamma_k$ is square-summable but non-summable while $N_k \ge 1$. In fact, convergence also follows if $\gamma_k = \gamma$ for every $k$ where $\gamma$ is sufficiently small, but $\sum_{k=1}\tfrac{1}{N_k} < \infty$.   (ii) In addition, a limit point of $\left\{ \x_k \right\}$ may not be feasible with respect to \eqref{eqn:prob}. For this reason, the stationarity with respect to the smoothed problem \eqref{eqn:double_smoothed} does not ensure $\lm{\eta}$-stationarity with respect to the original problem \eqref{eqn:prob}. However, if $\x_{R_K}$ is indeed feasible with respect to $\Xscr$, we may claim $\lm{\eta}$-stationarity with respect to \eqref{eqn:prob}. 
	\end{remark}

	\begin{theorem}[{\bf Rate and complexity statements for \texttt{VRSQN-ZO}}]\label{thm:sqn}\em
Let $\{\x_k\}$ be generated by Algorithm~\ref{algorithm:quasi-newton} and let
        Assumptions~\ref{ass-2} and \ref{assum:random_vars} hold. Suppose
        $\gamma_k \leq
        \frac{\underline{\lambda}_{\eta,\delta,p}}{\overline{\lambda}_{\eta,\delta,p}^2L_\eta}$
        for all $k$ where $\underline{\lambda}_{\eta,\delta,p}$ and
        $\overline{\lambda}_{\eta,\delta,p}$ are given by
        \eqref{eqn:eigenvalue_bounds}--\eqref{eqn:eigenvalue_bounds2}. Let
        $\delta \eta^2 \leq 4$.  For a given integer $K$, let ${R_K}$ be a random
        variable on $0,\ldots,K-1$ with probability mass function given
        by $\mathbb{P}{[}R_K = j{]} = {\tfrac{ \gamma_j}{\sum_{i=0}^{K-1}   \gamma_i}}$
        for all $0\leq j\leq K-1$.

        \noindent {\bf (i) [Error bound]} If $h_{\eta}^{*}\triangleq {\displaystyle \min_\x} h_\eta(\x)$ and $h_\eta(\x)$ is given by \eqref{eqn:double_smoothed}, then for any $K \ge 1$,
\vspace{-0.1in}
	\begin{align}\label{eqn:thm_sqn_i}
		\mathbb{E}\left[ \|\nabla h_{\eta}(\x_{{R_K}})\|^2\right] \leq \frac{2(h_{\eta}(\fyy{\x_{0}}) - h_{\eta}^{*}) +\left({8} \sqrt{2\pi}L_{\eta}{ n L_0^2}\overline{\lambda}^2_{\eta,\delta,p}\right)\sum_{k=0}^{K-1}\tfrac{\gamma_{k}^2}{N_k}}{\fyy{\fyy{\underline{\lambda}_{\eta,\delta,p}} \sum_{k=0}^{K-1}\gamma_k} }.
	\end{align}
        \noindent {\bf (ii) [Complexity for constant steplength and increasing $N_k$]} {For $0 \le k \le K-1$, let  $N_k = \lceil {n L_0 \eta^{3}}(k+1)^{1+b} \rceil$} and $\gamma_k := \tfrac{\underline{\lambda}_{\eta,\delta,p}}{L_{\eta}\overline{\lambda}^2_{\eta,\delta,p} }$ for any $k \ge 0$, where $a \in \mathbb{R}$ and $b>0 $. Then for any $K\geq 1$, 
\vspace{-0.1in}
		\begin{align*}
		\mathbb{E}\left[ \|\nabla h_{\eta}(\x_{\us{R_K}})\|^2\right] \leq \left(2(h_{\eta}(\fyy{\x_{0}}) - h_{\eta}^{*})L_\eta\left(\tfrac{\overline{\lambda}_{\eta,\delta,p}}{\underline{\lambda}_{\eta,\delta,p}}\right)^2+ \left({{8} n} \sqrt{2\pi}{ L_0^2}L_{\eta} \overline{\lambda}^2_{\eta,\delta,p}\right)\right)\tfrac{1}{K}.
	\end{align*}
If $\delta ={\eta^{-2}L_0^{2} n}$, {$b\geq 0$, and} $ \us{\epsilon}  >0$, to achieve {$\mathbb{E}\left[ \|\nabla h_{\eta}(\x_{\us{R_K}})\|\right]  \leq \epsilon$}, the iteration and sample complexities are $\mathcal{O}\left( {L_0^{4}}{n^{2}}{\eta^{-4}}\epsilon^{-2}\right)$ and 
 ${\mathcal{O}\left({L_0^{9+{8b}} n^{5+{4b}} \eta^{-5-2b}} \epsilon^{-2(2+b)}\right)}$, respectively.
	
        \noindent {\bf (iii) [Complexity for diminishing step and $N_k = 1$]} Let $N_k = 1$ and $\gamma := \tfrac{1}{ {\overline{\lambda}_{\eta,\delta,p}}{\left(16n  L_{\eta} K\right)^{1/2} \pi^{1/4} L_0}}$ for all $0\leq k\leq K-1$. Furthermore, let  $\delta :={\eta^{-2}L_0^{2} n}$.  Then, the following error bound holds for $K \geq \left(\frac{\overline{\lambda}_{\eta,\delta,p}}{\underline{\lambda}_{\eta,\delta,p}}\right)^2\frac{L_\eta}{{16 n} \sqrt{2\pi}{ L_0^2}}$.  
		\begin{align*}
		\mathbb{E}\left[ \|\nabla h_{\eta}(\x_{\us{R_K}})\|^2\right] \leq \left(\left(2(h_{\eta}(\fyy{\x_{0}}) - h_{\eta}^{*})+1\right){\left(16 n L_{\eta} K\right)^{1/2} (2\pi)^{1/4} L_0}\right)\left(\tfrac{\overline{\lambda}_{\eta,\delta,p}}{\underline{\lambda}_{\eta,\delta,p}}\right)\tfrac{1}{\sqrt{K}}.
	\end{align*}
	Further, the iteration and sample complexities are both $\mathcal{O}({n^3}L_0^{5} \eta^{-7} {\epsilon^{-4}})$.
			
%
\end{theorem}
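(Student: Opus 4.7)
The argument proceeds in three stages, each building on the one-step descent recursion in Proposition~\ref{prop:sqn_as}(i). For part~(i), take unconditional expectations of that recursion, rearrange to isolate $(\underline{\lambda}_{\eta,\delta,p}/2)\gamma_k \mathbb{E}[\|\nabla h_\eta(\x_k)\|^2]$, sum from $k=0$ to $K-1$, telescope the $h_\eta(\x_k) - h_\eta(\x_{k+1})$ differences, and replace $\mathbb{E}[h_\eta(\x_K)]$ with $h_\eta^*$ from below. Dividing both sides by $\underline{\lambda}_{\eta,\delta,p} \sum_{k=0}^{K-1} \gamma_k$ and recognizing the left-hand side as the expected squared gradient norm under the law of $R_K$ yields~\eqref{eqn:thm_sqn_i}.

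For part~(ii), substituting the constant step $\gamma_k = \underline{\lambda}_{\eta,\delta,p}/(L_\eta \overline{\lambda}_{\eta,\delta,p}^2)$ and the increasing batch $N_k = \lceil n L_0 \eta^3 (k+1)^{1+b} \rceil$ into the bound of (i) makes the denominator proportional to $K$ and, for $b > 0$, makes the noise sum $\sum_k \gamma_k^2/N_k$ uniformly bounded via comparison with a convergent $p$-series. The stated $\mathcal{O}(1/K)$ error bound then follows. For the complexity, one substitutes $\delta = \eta^{-2} L_0^2 n$ into the eigenvalue bounds of Proposition~\ref{prop:lambdak_bounds}; this choice gives $L_\eta/\sqrt{\delta} = \mathcal{O}(1)$, keeping the $(1 + 16 L_\eta \sqrt{2+\delta}/\delta)^{2p}$ factor in $\overline{\lambda}_{\eta,\delta,p}$ bounded in $(n,L_0,\eta^{-1})$, while $\underline{\lambda}_{\eta,\delta,p} = \mathcal{O}(\eta^2/(L_0^2 n))$. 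Jensen's inequality converts the norm-squared bound to one on $\mathbb{E}[\|\nabla h_\eta(\x_{R_K})\|]$; setting this equal to $\epsilon$ produces $K_\epsilon$, and the sample complexity follows from $\sum_{k=0}^{K_\epsilon} N_k = \mathcal{O}(n L_0 \eta^3 K_\epsilon^{2+b})$.

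For part~(iii), the choice $N_k = 1$ with the horizon-dependent constant $\gamma = 1/(\overline{\lambda}_{\eta,\delta,p}(16 n L_\eta K)^{1/2} \pi^{1/4} L_0)$ is calibrated so that the noise contribution $8 n \sqrt{2\pi} L_0^2 L_\eta \overline{\lambda}_{\eta,\delta,p}^2 \cdot K\gamma^2$ collapses to an $\mathcal{O}(1)$ constant (the ``$+1$'' in the stated bound), while the optimality-gap term $2(h_\eta(\x_0) - h_\eta^*)/(\underline{\lambda}_{\eta,\delta,p} K \gamma)$ scales as $(\overline{\lambda}_{\eta,\delta,p}/\underline{\lambda}_{\eta,\delta,p}) \cdot \mathcal{O}(1/\sqrt{K})$; the hypothesis on the lower bound of $K$ guarantees the noise term is not the dominant one. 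Applying Jensen's inequality and substituting $\delta = \eta^{-2} L_0^2 n$ yields the $\epsilon^{-4}$ iteration complexity, which coincides with the sample complexity since each iteration draws a single perturbation vector.

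The main obstacle is the careful bookkeeping of the eigenvalue bounds from Proposition~\ref{prop:lambdak_bounds} under the choice $\delta = \eta^{-2} L_0^2 n$: this specific choice is engineered so that $L_\eta\sqrt{2+\delta}/\delta$ stays bounded and the $(1+\cdot)^{2p}$ factor in $\overline{\lambda}_{\eta,\delta,p}$ does not blow up in $n$, $L_0$, or $\eta^{-1}$. Propagating the resulting asymptotic rates through both $L_\eta (\overline{\lambda}/\underline{\lambda})^2$ (the coefficient of the initial-gap term) and $n L_0^2 L_\eta \overline{\lambda}^2$ (the coefficient of the noise term) is the step where the stated exponents in $L_0$, $n$, and $\eta^{-1}$ crystallize, and tracking them consistently through both the iteration and sample complexity bounds is the most delicate piece of the proof.
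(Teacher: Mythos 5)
Your proposal is correct and follows essentially the same route as the paper: part (i) by telescoping the descent recursion of Proposition~\ref{prop:sqn_as}(i) and invoking the law of $R_K$, part (ii) by substituting the constant step and polynomially growing batch together with the bound $\sum_{k=0}^{K-1}(k+1)^{-(1+b)}\le 1+b^{-1}$ and the computation $\overline{\lambda}_{\eta,\delta,p}/\underline{\lambda}_{\eta,\delta,p}=\mathcal{O}(\eta^{-2}L_0^2 n)$ under $\delta=\eta^{-2}L_0^2 n$, and part (iii) by the horizon-calibrated stepsize that balances the noise term against the optimality gap. The bookkeeping you flag as the delicate step is exactly where the paper's proof spends its effort, so no further commentary is needed.
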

\begin{proof}
	\noindent (i) From Proposition~\ref{prop:sqn_as}, we have  
	$$ \mathbb{E}\left[ h_{\eta}(\x_{k+1})  \, \mid \, \mathcal{F}_k\right] \leq h_{\eta}(\x_k)  - \tfrac{\underline{\lambda}_{\eta,\delta,p}\gamma_k \|{\nabla h_{\eta}}(\x_k) \|^2}{2}   +  \tfrac{\fyy{{{8} n}\sqrt{2\pi}{ L_0^2}L_{\eta}\overline{\lambda}^2_{\eta,\delta,p}}\gamma_{k}^2}{N_k}   .$$
Taking unconditional expectations, summing both sides over $k=0,\ldots,K-1$, 
    dividing the both sides by $\sum_{k=0}^{K-1}\gamma_k$ and invoking the definition of $\mathbb{P}{[}R_{\us{K}} = j{]} = \tfrac{ \gamma_j}{\sum_{i=0}^{K-1}   \gamma_i}$, 
		\begin{align*}
 \tfrac{\underline{\lambda}_{\eta,\delta,p}}{2}\sum_{k=0}^{K-1}\gamma_k \mathbb{E}\left[\|\nabla h_{\eta}(\x_k) \|^2\right]  &  \leq    \mathbb{E}\left[ h_{\eta}(\x_0) \right]  -\mathbb{E}\left[ h_{\eta}(\x_{K})  \right]   + {\fyy{{{8} n} \sqrt{2\pi}{L_0^2}L_{\eta}\overline{\lambda}^2_{\eta,\delta,p}} }\sum_{k=0}^{K-1}\tfrac{\gamma_k^2}{N_k}  \\
	\tfrac{\underline{\lambda}_{\eta,\delta,p}}{2}  \mathbb{E}_{{R_K}}\left[\|\nabla h_{\eta}(\x_k) \|^2\right]  & \leq   \tfrac{\mathbb{E}\left[ h_{\eta}(\x_0) \right]  -\mathbb{E}\left[ h_{\eta}(\x_{K})  \right]   +  {\fyy{{{8} n} \sqrt{2\pi}{ L_0^2}L_{\eta} \overline{\lambda}^2_{\eta,\delta,p}} }\sum_{k=0}^{K-1}\tfrac{\gamma_k^2}{N_k} }{\sum_{k=0}^{K-1}\gamma_k} .
\end{align*}
	Dividing the both sides by $\tfrac{\underline{\lambda}_{\eta,\delta,p}}{2}$ and using $\mathbb{E}\left[ h_{\eta}(\x_{K})  \right] \geq h_{\eta}^{*}$, we obtain the inequality.  
	
\noindent (ii) Substituting {$N_k :=   \lceil a n L_0{\eta^{3}}(k+1)^{1+b} \rceil$} and $\gamma_k = \tfrac{\underline{\lambda}_{\eta,\delta,p}}{L_{\eta}\overline{\lambda}^2_{\eta,\delta,p} }$ in \eqref{eqn:thm_sqn_i}, we obtain 
	\begin{align*}
		\mathbb{E}\left[ \|\nabla h_{\eta}(\x_{\us{R_K}})\|^2\right] \leq \tfrac{2(h_{\eta}(\fyy{\x_{0}}) - h_{\eta}^{*})L_\eta\overline{\lambda}^2_{\eta,\delta,p} }{\underline{\lambda}^2_{\eta,\delta,p}K  }+ \tfrac{{{8}n} \sqrt{2\pi}{ L_0^2} L_{\eta}}{K}  \sum_{k=0}^{K-1} \tfrac{{n^{-1}L_0^{-1}}}{{\eta^{3}}(k+1)^{1+b}}.
	\end{align*}
Note that for any $K\geq 1$, we have {$\sum_{k=0}^{K-1}\frac{1}{(k+1)^{1+b}} \leq 1+b^{-1}$} (see \cite[Lemma 9]{yousefian2017smoothing}). This implies that  the inequality in (ii) holds. To show the complexity results, we proceed as follows. From \eqref{eqn:eigenvalue_bounds}--\eqref{eqn:eigenvalue_bounds2}, 
$\tfrac{\overline{\lambda}_{\eta,\delta,p}}{\underline{\lambda}_{\eta,\delta,p}} =  \left(4p + 1 \right)\left( 1 + \tfrac{\fyy{16 L_\eta}\sqrt{2+\delta}}{\delta} \right)^{2p} 32\delta^{-1}(2+\delta)(p+1)L_\eta^2 .$
Substituting $\delta:= {\eta^{-2}L_0^{2} n}$ and noting that $L_{\eta} = \tfrac{L_{0}{n^{1/2}}+1}{\eta}$, we have 
\begin{align*}
\tfrac{\overline{\lambda}_{\eta,\delta,p}}{\underline{\lambda}_{\eta,\delta,p}} 
&=  \left(4p + 1 \right)\left( 1 + \tfrac{16 (L_{0}{n^{1/2}}+1)\sqrt{2+\eta^{-2}{L_0^2 n}}}{\eta^{-2}{L_{0}^{2}n}} \right)^{2p} \\
& \times 64({\eta^{2}L_0^{-2} n^{-1}})(2+{\eta^{-2}L_0^{2} n})(p+1)\left(\tfrac{L_{0}{\sqrt{n}}+1}{\eta}\right)^2.
\end{align*}
This implies that $\tfrac{\overline{\lambda}_{\eta,\delta,p}}{\underline{\lambda}_{\eta,\delta,p}}= \mathcal{O}({\eta^{-2}L_0^{2} n})$. 
We conclude that the iteration complexity to ensure that {$\mathbb{E}\left[ \|\nabla h_{\eta}(\x_{\us{R_K}})\|\right]  \leq \epsilon$} is $K_\epsilon\triangleq \mathcal{O}\left({L_0^{4}}{n^{2}}{\eta^{-4}}\epsilon^{-2}\right)$, while the sample complexity  is 
\begin{align*}
	\textstyle\sum_{k=0}^{K_\epsilon} N_k & = \textstyle\sum_{k=0}^{K_\epsilon} \lceil {nL_0}{\eta^3} (k+1)^{1+b} \rceil 
	= \mathcal{O}({n L_0 {\eta^{3}}} K_\epsilon^{2+b})\\ 
	&= \mathcal{O}\left({L_0^{9+{8b}} n^{5+{4b}} \eta^{-5-2b}} \epsilon^{-2(2+b)}\right).
\end{align*}

\noindent (iii) First, we note that the lower bound on $K$ is obtained by requiring $\gamma_k \leq \frac{\underline{\lambda}_{\eta,\delta,p}}{\overline{\lambda}_{\eta,\delta,p}^2L_\eta}$ for $\gamma := \tfrac{1}{ {\overline{\lambda}_{\eta,\delta,p}}{\left({16}n  L_{\eta} K\right)^{1/2} \pi^{1/4} L_0}}$. Consider \eqref{eqn:thm_sqn_i} and for a constant stepsize $\gamma$ and $N_k:=1$ for every $k \ge 0$, we have 
		$\mathbb{E}\left[ \|\nabla h_{\eta}(\x_{\us{R_K}})\|^2\right] \leq 
		\tfrac{2(h_{\eta}(\fyy{\x_{0}}) - h_{\eta}^{*}) }{\underline{\lambda}_{\eta,\delta,p} \gamma K } +
		\tfrac{ \left({16} \sqrt{\pi}L_{\eta}{ n L_0^2}\overline{\lambda}^2_{\eta,\delta,p}\right)\gamma }{  \underline{\lambda}_{\eta,\delta,p} }.$
    Substituting $\gamma := \tfrac{1}{{\overline{\lambda}_{\eta,\delta,p}}{\left({16}n  L_{\eta} K\right)^{1/2} \pi^{1/4} L_0}}$, we obtain the bound in (iii). The iteration complexity ({equal to sample complexity when $N_k = 1$ for every $k$}) is obtained from this bound and by noting that {$L_{\eta} = \mathcal{O}({n^{1/2}L_0}\eta^{-1})$ and} from (ii), $\tfrac{\overline{\lambda}_{\eta,\delta,p}}{\underline{\lambda}_{\eta,\delta,p}}= \mathcal{O}({\eta^{-2}L_0^{2} n})$.
\end{proof}
\begin{remark}

\noindent (i) Notably, in the complexity bounds in
    Theorem~\ref{thm:sqn}, the exponents of {$\eta$, $L_0$, and $n$} is invariant {with} the memory
    parameter $p$. This was shown by choosing $\delta:=  {\eta^{-2}L_0^{2} n}$ and
    observing that in the bound on the ratio
    $\tfrac{\overline{\lambda}_{\eta,\delta,p}}{\underline{\lambda}_{\eta,\delta,p}}$,
    the exponents of {$\eta$, $L_0$, and $n$ } are invariant {with} $p$. 
\noindent (ii) Unlike
    \texttt{{VRG-ZO}}, iterates generated by \texttt{VRSQN-ZO} are not
    guaranteed to be feasible. {This implies that a the stationary point obtained by \texttt{VRSQN-ZO} is not guaranteed to be $\eta-$Clarke stationary with respect to the original problem.}
The \texttt{{VRG-ZO}} scheme projects the
    iterate back onto $\Xscr$, enforcing feasibility at each
    iteration, while \texttt{VRSQN-ZO} utilizes a Moreau
    smoothing of the indicator function, merely {penalizing} the infeasibility.
\noindent (iii) {Finally, key benefits of \texttt{VRSQN-ZO} emerge in the form of a ``self-scaling'' behavior core to QN schemes as well as its ability to better deal with ill-conditioning by using larger memory sizes $p$. These benefits are less clear by merely examining the complexity bounds.} 
\end{remark}

\section{Numerical Results}
\label{sec:numerics}
In this section, we examine the performance of \cref{algorithm:zo_nonconvex}
and \cref{algorithm:quasi-newton} {on a test problem from logistic regression}. 

\subsection{Logistic Regression} \label{sec:logistic-regression}
{Given a dataset $\{(\bz_i, y_i)\}_{i=1}^{S}$ where $\bz_i \in \mathbb{R}^{n-1}$ and $y_i \in \{0, 1\}$, we define the logistic regression model as $ {\mathbb{P}[}y_i = 1 | \bz_i{]} := \sigma(\mathbf{w}^{{\top}} \bz_i + {w_0}) := \frac{1}{1 + e^{-(\mathbf{w}^{{\top}} \bz_i + {w_0})}}$.  
 In this setting, the empirical risk (negative log-likelihood) is given by
\vspace{-0.2in}
	\begin{align*}
		\text{Risk}(\mathbf{w}, {w_0}) &= -\tfrac{1}{S} \sum_{i=1}^{S} \left[ y_i \log(\hat{y}_i) + (1 - y_i) \log(1 - \hat{y}_i)\right] \\
		& = -\tfrac{1}{S} \sum_{i=1}^{S} \left[ y_i \log(\sigma(\mathbf{w}^{{\top}} \bz_i + {w_0})) + (1 - y_i) \log(1 - \sigma(\mathbf{w}^{{\top}} \bz_i + {w_0})) \right]
	\end{align*}
	where $\hat{y}_i = \sigma(\mathbf{w}^{{\top}} \bz_i + {w_0})$. We are interested in the case where the $\ell_1$ norm of the weight vector is penalized. That is, we consider the following loss function:				
		$\text{Loss}(\mathbf{w}, {w_0}) = \text{Risk}(\mathbf{w}, {w_0}) + \lambda \| \mathbf{w} \|_1,$
	where $\lambda > 0$ is the regularization parameter, and $\| \mathbf{w} \|_1 = \sum_{j=1}^{n-1} |w_j|$. The optimization problem of interest is therefore to find the weights $\mathbf{w}$ and bias ${w_0}$ that minimize the regularized loss function:
	\begin{align*}
		\min_{(\mathbf{w}, {w_0}) \in \Xscr } \left\{ -\tfrac{1}{S} \sum_{i=1}^{S} \left[ y_i \log(\hat{y}_i) + (1 - y_i) \log(1 - \hat{y}_i) \right] + \lambda \sum_{j=1}^{n-1} |w_j| \right\} .
	\end{align*} In what follows, we denote the concatenation of the weight vector $\mathbf{w}$ and the bias ${w_0}$ by $\x$, and define $f(\x) := \text{Loss}(\mathbf{w}, {w_0})$. In this setting, a single sample $\xi$ corresponds to one $\left(\bz ,y\right)$ pair.}
{
	We consider this problem for a number of dimensions in \far{Table}~\ref{tbl:regression_results}, assuming that the number of important features is $\lceil 0.2 n\rceil$, with the remaining features containing {multivariate Gaussian noise (with mean zero and  unit variance)}.
	{We denote the number of iterations {during which} ``damping'' is active by $k_{\text{damp}}$.}} {In Tables~\ref{tab:consolidated-vrsqn-zo}--\ref{tab:consolidated-vrg-zo}, we present additional results for   \texttt{VRSQN-ZO} and \texttt{VRG-ZO}, respectively, under different stepsize sequences and sampling rates. In these tables, the sampling budget is set to
$5e4$ and $N_k := \lceil 2 + a k \rceil $. We
observe that smaller values of $a$ lead to lower
sampling levels at a given $k$, more overall steps,
and slightly better accuracy at the cost of slightly
larger CPU times. We also observe that for
\texttt{VR-SQN}, the empirical infeasibility bounds
correspond with the choice of $\eta$, as proven in the
theoretical claims.}} 
	\begin{table}[htb]
		\centering
		\caption{\footnotesize Logistic Regression via \texttt{VRSQN-ZO} and \texttt{VRG-ZO}. We consider settings where $\Xscr := \mathbb{R}^n$ in order to validate
		 that reasonable solutions are obtained. In this setting, ``accuracy'' refers to the total number of correct predictions divided by the total number of predictions. ``Precision'' 
		 refers to the portion of ``positives'' identified by the model that were indeed true positives. ``Recall'' refers to the fraction of true positives in the dataset that were
		  identified by the model. The purpose of these metrics is merely to verify that these schemes are indeed ``solving'' a logistic regression problem.}
		\label{tbl:regression_results}
		\scriptsize
		\begin{tabular}{ccccccccc}
			\toprule
			$n$ & \multicolumn{2}{c}{Accuracy} 
			& \multicolumn{2}{c}{Precision} 
			& \multicolumn{2}{c}{Recall} 
			& \multicolumn{2}{c}{$\nabla f(\x_{R})$} \\
			\cmidrule(lr){2-3} \cmidrule(lr){4-5} \cmidrule(lr){6-7} \cmidrule(lr){8-9}
			& \texttt{VRSQN-ZO} & \texttt{VRG-ZO} & \texttt{VRSQN-ZO} & \texttt{VRG-ZO} & \texttt{VRSQN-ZO} & \texttt{VRG-ZO} & \texttt{VRSQN-ZO} & \texttt{VRG-ZO} \\
			\midrule
			5   & 9.6e-1 & 9.6e-1 & 9.2e-1 & 9.3e-1 & 9.9e-1 & 9.9e-1 & 2.8e-1 & 1.1e-1 \\
			10  & 9.9e-1 & 9.9e-1 & 9.8e-1 & 9.8e-1 & 9.9e-1 & 9.9e-1 & 2.2e-1 & 8.0e-2 \\
			50  & 9.3e-1 & 9.4e-1 & 9.4e-1 & 9.4e-1 & 9.4e-1 & 9.5e-1 & 1.7e-1 & 1.3e-1 \\
			100 & 9.4e-1 & 9.4e-1 & 9.3e-1 & 9.4e-1 & 9.4e-1 & 9.5e-1 & 1.8e-1 & 2.1e-1 \\
			\bottomrule
		\end{tabular}
	\end{table}
	\begin{table}[ht]
		\scriptsize
		\centering
				\caption{{\scriptsize \texttt{VRSQN-ZO} under $\{\gamma_k\}$ sequences and sampling rates.
				 $\mathbb{E}\left[\left\|\Pi_{\Xscr}\left[\x_K \right] - \x_K\right\|\right]$ is denoted by $\textbf{infeas}_{K}$, $\mathbb{E}\left[\|\nabla f(\x_{R})\| \right]$
				  by $G_{R}^{\texttt{SQN}}$, $\mathbb{E}\left[\|\nabla f(\x_{K})\| \right]$ by $G_{K}^{\texttt{SQN}}$, and $\mathbb{E}\left[k_{\text{damp}}\right]$
				   by $\hat{\textbf{k}}_{\textbf{damp}}$.}}
		
		\begin{tabular}{>{\centering\arraybackslash}p{1.85cm}ccccccc}
			\toprule
			\textbf{$\gamma_k$} & $(a)$ & $G_{R}^{\texttt{SQN}}$ & $G_{K}^{\texttt{SQN}}$ & $\mathbb{E}\left[ f(\x_{K}) \right]$ & CPU (s) & $\textbf{infeas}_{K}$ & $\hat{\textbf{k}}_{\textbf{damp}} / K$ \\ 
			\midrule
			\multirow{3}{*}{$\frac{1}{100}$}                           
			& $0.01$ & 0.18  & 0.18 & 0.24 & 6.75  & 0.36 & 211.8 / 2659\\
			& $0.1$  & 0.20  & 0.18 & 0.24 & 5.32  & 0.35 & 35.2 / 946 \\
			& $1.0$  & 0.19  & 0.18 & 0.24 & 4.80  & 0.35 & 16.6 / 310 \\
			\midrule
			\multirow{3}{*}{$\left(1 +\tfrac{\sqrt{k + 1}}{100}\right)^{-1}$} 
			& $0.01$ & 0.18 & 0.18 & 0.24 & 5.74 & 0.36 & 183.0 / 2659  \\
			& $0.1$  & 0.20 & 0.18 & 0.24 & 4.54 & 0.35 & 34.2 / 946 \\
			& $1.0$  & 0.22 & 0.18 & 0.24 & 6.34 & 0.36 & 16.4 / 310 \\
			\midrule
			\multirow{3}{*}{$\left(1 +\tfrac{k + 1}{100}\right)^{-1}$}      
			& $0.01$ & 0.18 & 0.18 & 0.24 & 7.46 & 0.36 & 62.6 / 2659 \\
			& $0.1$  & 0.18 & 0.18 & 0.24 & 5.31 & 0.36 & 16.6 / 946  \\
			& $1.0$  & 0.19 & 0.18 & 0.24 & 3.33 & 0.35 & 12.4 / 310  \\
			\bottomrule
		\end{tabular}
		\label{tab:consolidated-vrsqn-zo}
		{\scriptsize ($B = 5.0e4$, $n = 5$, $\eta = 0.1$)}
	\end{table}
	\begin{table}[ht]
		\scriptsize
		\centering
		\caption{{\footnotesize \texttt{VRG-ZO} under different {stepsize} sequences and sampling rates. Here we denote $G_{R}:=\mathbb{E}\left[\left\| \x_{R} - \Pi_{\Xscr}\left[x_R - \nabla f(\x_{R})\right]\right\|^2\right]$, and $G_{K}:=\mathbb{E}\left[\left\| \x_{K} - \Pi_{\Xscr}\left[x_K - \nabla f(\x_{K})\right]\right\|^2\right]$.} }
		\label{tab:consolidated-vrg-zo}
		\begin{tabular}{cccccc}
			\toprule
			$\gamma_k$ & $(a)$ & $G_{R}$ & $G_{K}$ & $\mathbb{E}\left[f(\x_K)\right]$ & CPU Time (s) \\ 
			\midrule
			\multirow{3}{*}{$\frac{1}{100}$}                           
			& $0.01$ & 0.24 & 0.24 & 0.32 & 3.3 \\
			& $0.1$  & 0.28 & 0.24 & 0.32 & 2.5 \\
			& $1.0$  & 0.34 & 0.24 & 0.32 & 2.5 \\
			\midrule
			\multirow{3}{*}{$\left(1 +\tfrac{\sqrt{k + 1}}{100}\right)^{-1}$} 
			& $0.01$ & 0.20 & 0.19 & 0.35 & 8.4 \\
			& $0.1$  & 0.30 & 0.19 & 0.35 & 3.7 \\
			& $1.0$  & 0.36 & 0.21 & 0.38 & 2.3 \\
			\midrule
			\multirow{3}{*}{$\left(1 +\tfrac{k + 1}{100}\right)^{-1}$}      
			& $0.01$ &  0.24 & 0.24 & 0.32 & 3.0 \\
			& $0.1$  & 0.28 & 0.24 & 0.32 & 2.5  \\
			& $1.0$  & 0.35 & 0.25 & 0.33 & 3.1 \\
			\bottomrule
		\end{tabular}
	\end{table}
	\subsection{Minimum of Two Noise-afflicted Quadratics}
	\label{sec:min-quad}
	Let ${\bxi}$ be a uniform random variable on $[0,2].$ Define $f:\mathbb{R}^d \mapsto \mathbb{R}$ as $f(\x) = \mathbb{E} \left[  \min \left[ f_1(\x,{\bxi}),f_2(\x, {\bxi}) \right] \right]$,
	where $f_1(\x, {\bxi}) = \sum_{i}^{d} (x_i - \bxi )^2$ and  $f_2(\x, {\bxi}) = \sum_{i}^{d} (x_{i} + \bxi )^2 .$ 
	 The feasible region, $\Xscr$, is a $n$-dimensional cube of width $10$. {In Tables~\ref{tab:consolidated-vrsqn-zo-min-quadratic} and \ref{tab:consolidated-vrg-zo-min-quadratic} } we compare the performance of \texttt{VRG-ZO} and \texttt{VRSQN-ZO} equipped with the same sample budget of 5.0$e$6, {varying batch size sequences} after 20 replications. For \texttt{VRG-ZO}, $\ell$ was set to $\lceil K/2 \rceil$.
	 \texttt{VRSQN-ZO} appears to outperform \texttt{VRG-ZO} in this setting. 	 
	\begin{figure}[ht]
		\centering
		\begin{subfigure}[b]{0.4\textwidth}
			\includegraphics[width=\textwidth]{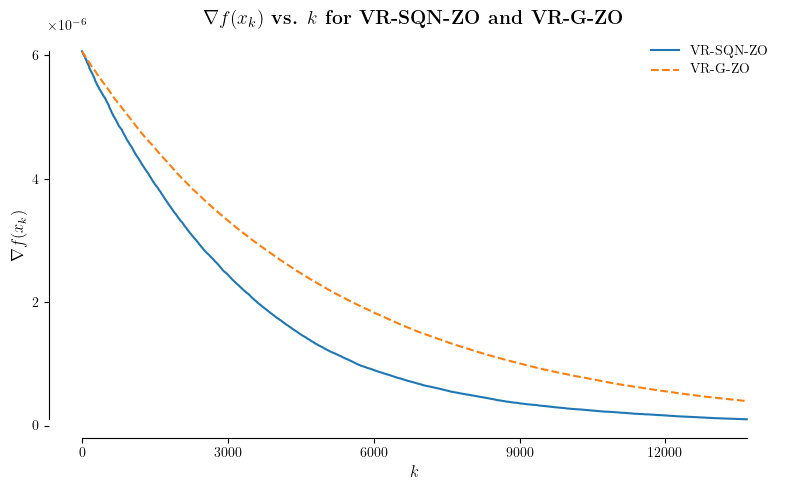}
		\end{subfigure}
		\hfill
		\begin{subfigure}[b]{0.4\textwidth}
			\includegraphics[width=\textwidth]{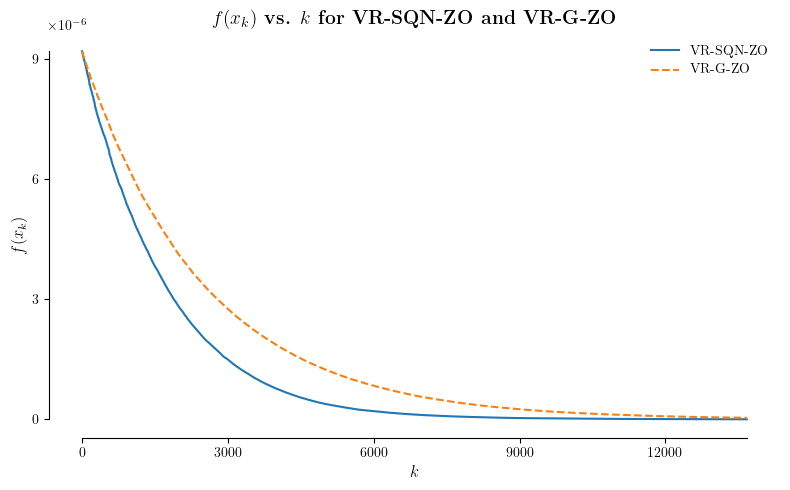}
		\end{subfigure}
		\caption{\texttt{VRSQN-ZO} vs. \texttt{VRG-ZO} on the minimum of two noise-afflicted quadratics.}
		\label{fig:vrsqn_vs_vrgzo}
\vspace{-0.2in}
	\end{figure}
	\begin{table}[ht]
		\scriptsize
		\centering
				\caption{{\scriptsize \texttt{VRSQN-ZO} under $\{\gamma_k\}$ sequences and sampling rates.
				 $\mathbb{E}\left[\left\|\Pi_{\Xscr}\left[\x_K \right] - \x_K\right\|\right]$ is denoted by $\textbf{infeas}_{K}$, $\mathbb{E}\left[\|\nabla f(\x_{R})\| \right]$
				  by $G_{R}^{\texttt{SQN}}$, $\mathbb{E}\left[\|\nabla f(\x_{K})\| \right]$ by $G_{K}^{\texttt{SQN}}$, and $\mathbb{E}\left[k_{\text{damp}}\right]$
				   by $\hat{\textbf{k}}_{\textbf{damp}}$.}}
		
		\begin{tabular}{>{\centering\arraybackslash}p{1.85cm}ccccccc}
			\toprule
			\textbf{$\gamma_k$} & $(a)$ & $G_{R}^{\texttt{SQN}}$ & $G_{K}^{\texttt{SQN}}$ & $\mathbb{E}\left[ f(\x_{K}) \right]$ & CPU (s) & $\textbf{infeas}_{K}$ & $\hat{\textbf{k}}_{\textbf{damp}} / K$ \\ 
			\midrule
			\multirow{3}{*}{$\frac{1}{100}$}                           
			& $0.01$ & 5.9e-7 & 2.9e-7 & 2.0e-8 & 21.26 & 0.0 / 9465 \\
			& $0.1$  & 2.93e-6 & 2.07e-6 & 1.08e-6 & 10.30 & 2.4 / 3107 \\
			& $1.0$  & 4.20e-6 & 3.81e-6 & 3.68e-6 & 4.83 & 2.4 / 994   \\
			\midrule
			\multirow{3}{*}{$\left(1 +\tfrac{\sqrt{k + 1}}{100}\right)^{-1}$} 
			& $0.01$ & 3.42e-6 & 3.02e-6 & 2.32e-6 & 11.47 & 0.0 / 9465  \\
			& $0.1$  & 4.94e-6 & 4.65e-6 & 5.44e-6 & 3.56 & 2.4 / 3107   \\
			& $1.0$  & 4.87e-6 & 4.70e-6 & 5.60e-6 & 1.58 & 3.2 / 994    \\
			\midrule
			\multirow{3}{*}{$\left(1 +\tfrac{k + 1}{100}\right)^{-1}$}      
			& $0.01$ & 4.69e-6 & 4.64e-6 & 5.47e-6 & 9.22 & 0.0 / 9465  \\
			& $0.1$  & 5.86e-6 & 5.76e-6 & 8.35e-6 & 3.10 & 4.0 / 3107  \\
			& $1.0$  & 5.21e-6 & 5.11e-6 & 6.63e-6 & 1.33 & 2.6 / 994  \\
			\bottomrule
		\end{tabular}
		\label{tab:consolidated-vrsqn-zo-min-quadratic}
		{\scriptsize ($B = 5.0e4$, $n = 12$, $\eta = 0.1$)}
	\end{table}
	\begin{table}[ht]
		\scriptsize
		\centering
		\caption{{\scriptsize \texttt{VRG-ZO} under different {stepsize} sequences and sampling rates. Here we denote $G_{R}:=\mathbb{E}\left[\left\| \x_{R} - \Pi_{\Xscr}\left[x_R - \nabla f(\x_{R})\right]\right\|^2\right]$, and $G_{K}:=\mathbb{E}\left[\left\| \x_{K} - \Pi_{\Xscr}\left[x_K - \nabla f(\x_{K})\right]\right\|^2\right]$.} }
		\label{tab:consolidated-vrg-zo-min-quadratic}
		\begin{tabular}{cccccc}
			\toprule
			$\gamma_k$ & $(a)$ & $G_{R}$ & $G_{K}$ & $\mathbb{E}\left[f(\x_K)\right]$ & CPU Time (s) \\ 
			\midrule
			\multirow{3}{*}{$\frac{1}{100}$}                           
			& $0.01$ & 1.29e-6 & 8.1e-7 & 1.70e-7 & 15.9 \\
			& $0.1$  & 4.17e-6 & 3.49e-6 & 3.05e-6 & 8.20  \\
			& $1.0$  & 4.82e-6 & 4.59e-6 & 5.34e-6 & 3.40  \\
			\midrule
			\multirow{3}{*}{$\left(1 +\tfrac{\sqrt{k + 1}}{100}\right)^{-1}$} 
			& $0.01$ & 4.03e-6 & 3.76e-6 & 3.59e-6 & 4.51  \\
			& $0.1$  & 5.55e-6 & 5.38e-6 & 7.27e-6 & 1.71  \\
			& $1.0$  & 5.20e-6 & 5.10e-6 & 6.60e-6 & 0.86  \\
			\midrule
			\multirow{3}{*}{$\left(1 +\tfrac{k + 1}{100}\right)^{-1}$}      
			& $0.01$ & 4.90e-6 & 4.74e-6 & 5.71e-6 & 0.83  \\
			& $0.1$  & 4.73e-6 & 4.11e-6 & 4.23e-6 & 1.74  \\
			& $1.0$  & 2.16e-6 & 1.65e-6 & 7.0e-7 & 4.20  \\
			\bottomrule
		\end{tabular}
\vspace{-0.2in}
	\end{table}
\section{Concluding {R}emarks}\label{sec:conc}
While a significant amount of prior research has analyzed nonsmooth and
nonconvex optimization problems, much of this effort has relied on either the
imposition of structural assumptions on the problem or required weak convexity,
rather than general nonconvexity. Little research, if any, is available in
stochastic regimes to contend with general nonconvex and nonsmooth optimization
problems. To this end, we develop a randomized smoothing framework which
allows for claiming that a stationary point of the $\eta$-smoothed problem is an
$\eta$-{Clarke} stationary point for the original problem. Via
a suitable residual function that provides a metric for stationarity
for the smoothed problem, we present a zeroth-order framework reliant on
utilizing sampled function evaluations. In this setting, {we show that} the residual function of the smoothed
problem tends to zero almost surely along the generated sequence. To
compute an $\x$ that ensures that the expected norm of the residual of the $\eta$-smoothed problem is within
$\epsilon$, we proceed to show that no more than \vrgzoIterationComplexity ~projection steps and \vrgzoSampleComplexity ~function evaluations are required. Further, we propose a zeroth-order stochastic
quasi-Newton scheme by combining randomized and Moreau
smoothing. We establish an almost-sure convergence result and derive the
corresponding iteration and sample complexities of
\vrsqnzoIterationComplexity ~and  
\vrsqnzoSampleComplexity, respectively. These
results appear to be novel in addressing constrained nonsmooth
nonconvex stochastic optimization.

\bibliographystyle{plain}
\bibliography{demobib_v1,wsc11-v03a,ref_paIRIG_v01_fy}


\end{document}